\documentclass[a4paper,reqno]{amsart}

\usepackage[english]{babel}
\usepackage[all]{xy}
\usepackage{amssymb}
\usepackage{verbatim}
\usepackage{geometry}
\usepackage{enumerate}
\setlength{\unitlength}{700sp}

\usepackage{hyperref}


\DeclareMathOperator{\End}  {End}
\DeclareMathOperator{\diag} {diag}
\DeclareMathOperator{\Id}   {Id}
\DeclareMathOperator{\Pic}  {Pic}
\DeclareMathOperator{\Supp} {Supp}
\DeclareMathOperator{\Proj} {Proj}


\theoremstyle{plain}
\newtheorem*{teo1}{Theorem 1}
\newtheorem*{teo2}{Theorem 2}
\newtheorem{lem}{Lemma}[section]
\newtheorem{teo}[lem]{Theorem}
\newtheorem{prop}[lem]{Proposition}
\newtheorem{cor}[lem]{Corollary}

\theoremstyle{definition}
\newtheorem*{dfn*}{Definition}
\newtheorem{dfn}[lem]{Definition}

\theoremstyle{remark}
\newtheorem{oss}[lem]{Remark}


\let\overlinedmarginpar\marginpar
\renewcommand\marginpar[1]{\-\overlinedmarginpar[\raggedleft\footnotesize
    #1]{\raggedright\footnotesize #1}}


\begin{document}

\title[Simple linear compactifications of odd orthogonal groups]{Simple linear compactifications of\\odd orthogonal groups}

\author{Jacopo Gandini}

\date{\today}

\curraddr{\textsc{Dipartimento di Matematica ``Guido Castelnuovo''\\
                    ``Sapienza'' Universit\`a di Roma\\
                    Piazzale Aldo Moro 5\\
                    00185 Roma, Italy}}

\email{gandini@mat.uniroma1.it}

\begin{abstract}
We classify the simple linear compactifications of $\mathrm{SO}(2r+1)$, namely those compactifications with a unique closed orbit which are obtained by taking the closure of the $\mathrm{SO}(2r+1)\times \mathrm{SO}(2r+1)$-orbit of the identity in a projective space $\mathbb{P}(\End(V))$, where $V$ is a finite dimensional rational $\mathrm{SO}(2r+1)$-module.
\end{abstract}

\maketitle

\section*{Introduction}

Let $G$ be a semisimple and simply-connected algebraic group defined over an algebraically closed field $\Bbbk$ of characteristic zero. Fix a maximal torus $T \subset G$ and a Borel subgroup $B \supset T$, denote $\Phi$ the associated root system of $G$ and $\Delta \subset \Phi$ the associated basis. Denote $\Lambda$ the weight lattice of $G$ and $\Lambda^+ \subset \Lambda$ the semigroup of dominant weights. For $\lambda \in \Lambda^+$, denote $V(\lambda)$ the simple $G$-module of highest weight $\lambda$. 

A $G\times G$-variety $X$ is called \textit{linear} if it admits an equivariant embedding in the projective space of a finite dimensional $G\times G$-module, while is called \textit{simple} if it possesses a unique closed $G\times G$-orbit. If $\Pi\subset \Lambda^+$ is a finite subset, consider the $G\times G$-variety
\[	X_\Pi = \overline{(G\times G)[\Id]} \subset \mathbb{P}\Big(\bigoplus_{\lambda \in \Pi} \End(V(\lambda)\Big):	\]
it is a linear compactification of a quotient of $G$, and conversely every linear compactification of a quotient of $G$ arise in such a way for some $\Pi \subset \Lambda^+$. We say that $X_\Pi$ is \textit{adjoint} if it is a compactification of a quotient of the adjoint group $G_\mathrm{ad}$.

The variety $X_\Pi$ was studied by Timashev in \cite{Ti}: there are studied the local structure and the $G\times G$-orbit structure, and normality and smoothness are characterized as well. The conditions of normality in particular rely on some properties of the tensor product, and together with the conditions of smoothness they were remarkably simplified by Bravi, Gandini, Maffei, Ruzzi in \cite{BGMR} in case $X_\Pi$ is simple and adjoint, and by Gandini, Ruzzi in \cite{GR} in case $X_\Pi$ is simple. In particular, in \cite{BGMR} it was shown that every simple adjoint linear compactification is normal if $G$ is simply laced, whereas several examples of non-normal simple adjoint linear compactifications arise in the non-simply laced case.

By a theorem of Sumihiro (see \cite{KKLV}), every simple normal $G\times G$-variety is linear. Hence if we restrict to simple normal adjoint $X_\Pi$'s, a classification follows by the general Luna-Vust theory of spherical embeddings (see \cite{Kn}): they are classified by their closed orbits, i.e. by non-empty subsets of $\Delta$. However, as far as we know, no explicit classification is known in the general spherical context without assuming normality: this paper stems from the attempt to understand this classification in some explicit case. More precisely, the aim of this work is to classify the simple linear compactifications of $\mathrm{SO}(2r+1)$: this will be done by classifying the subsets $\Pi \subset \Lambda^+$ which give rise to isomorphic simple compactifications.

Consider the \textit{dominance order} on $\Lambda$, defined by $\mu \leqslant \lambda$ if $\lambda - \mu \in \mathbb{N} \Delta$, and the \textit{rational dominance order} $\leqslant_\mathbb{Q}$, defined by $\mu \leqslant_\mathbb{Q} \lambda$ if $\lambda - \mu \in \mathbb{Q}_{\geqslant 0} \Delta$. If $\Pi \subset \Lambda^+$ is finite, then the closed orbits of $X_\Pi$ correspond to some maximal elements of $\Pi$ w.r.t. $\leqslant_\mathbb{Q}$, and $X_\Pi$ is simple if and only if $\Pi$ contains a unique maximal element w.r.t. $\leqslant_\mathbb{Q}$. If this is the case, we say that $\Pi \subset \Lambda^+$ is a \textit{simple subset}. On the other hand, $X_\Pi$ is an adjoint compactification if and only if $\Pi$ is contained in a coset of $\Lambda/\mathbb{Z}\Delta$, in which case we say that $\Pi \subset \Lambda^+$ is an \textit{adjoint subset}. Therefore $X_\Pi$ is a simple adjoint variety if and only if $\Pi$ contains a unique maximal element w.r.t. $\leqslant$.

For simplicity, in case $\Pi = \{\lambda\}$, we denote $X_\Pi$ by $X_\lambda$, while in case $\Pi = \{\lambda, \mu\}$, we denote $X_\Pi$ by $X_{\lambda, \mu}$. Let $\Pi\subset \Lambda^+$ be a simple adjoint subset with maximal element $\lambda$, denote $\widetilde{X}_\lambda$ the normalization of $X_\lambda$ and $\Pi^+(\lambda) = \{ \mu \in \Lambda^+ \, : \, \mu \leqslant \lambda \}$. Then $\Pi \subset \Pi^+(\lambda)$ and we get natural projections
\[	X_{\Pi^+(\lambda)} \longrightarrow X_\Pi \longrightarrow X_\lambda	\]
While Kannan shown in \cite{Ka} that $X_{\Pi^+(\lambda)}$ is projectively normal, De~Concini proved in \cite{DC} that $X_{\Pi^+(\lambda)} \simeq \widetilde{X}_\lambda$. In particular, if $\Pi$ is adjoint and simple with maximal element $\lambda$, it follows that $\widetilde{X}_\lambda \to X_\Pi$ is the normalization.

If $\lambda\in \Lambda^+$, we say that a weight $\mu \in \Pi^+(\lambda)$ is \textit{trivial} if $X_{\lambda, \mu}$ is equivariantly isomorphic to $X_\lambda$. We denote by $\Pi^+_\mathrm{tr}(\lambda) \subset \Pi^+(\lambda)$ the subset of trivial weights: if $G$ is simply laced, then by \cite{BGMR} we have $\Pi^+_\mathrm{tr}(\lambda) = \Pi^+(\lambda)$. If $\Supp(\lambda)\subset \Delta$ denotes the set of simple roots non-orthogonal to $\lambda$, then the variety $X_\lambda$ depends only on $\Supp(\lambda)$: therefore a first step to classify the simple linear compactifications $X_\Pi$ such that $\widetilde{X}_\lambda \to X_\Pi \to X_\lambda$ is to characterize the set $\Pi^+_\mathrm{tr}(n\lambda)$ for $n \in \mathbb{N}$.

In the case $G = \mathrm{Spin}(2r+1)$, we will give the following combinatorial description of trivial weights. Denote $\Delta = \{\alpha_1, \ldots, \alpha_r\}$, where the numbering is the usual one as in \cite{Bo}, and denote $\omega_1, \ldots, \omega_r$ the associated fundamental weights.

\begin{teo1}[see Theorem \ref{teo II: classificazione pesi banali B}]
Let $G = \mathrm{Spin}(2r+1)$. Let $\lambda \in \Lambda^+$ and $\mu \in \Pi^+(\lambda)$, denote $q$ and $l$ the maximal integers such that $\alpha_q \in \Supp(\lambda)$ and $\alpha_l \in \Supp(\mu)$ and write $\lambda-\mu = \sum_{i=1}^r a_i \alpha_i$. Then $\mu \in \Pi^+_\mathrm{tr}(\lambda)$ if and only if $a_r$ is even or $a_r > 2\min\{r-l,r-q\}$.
\end{teo1}

Previous theorem essentially expresses some properties of the tensor product. A main motivation to explain the combinatorial condition in the previous theorem arises by considering the case of the first fundamental weight, where it can be deduced by the Schur-Weyl duality (Proposition \ref{prop II: schur-weyl1}).

To reduce the classification of the simple linear compactifications of $\mathrm{SO}(2r+1)$ to the classification of the trivial weights, it is possible to define a partial order relation $\leqslant^\lambda$ on $\Lambda$ with the following geometrical meaning: if $\mu, \nu \in \Pi^+(\lambda) \smallsetminus \Pi^+_\mathrm{tr}(\lambda)$, then
\[	\nu \leqslant^\lambda \mu 	\qquad 	\text{ if and only if }	\qquad
	\begin{array}{c}
		\text{ there exists a $G\times G$-morphism}	\\
		X_{\lambda,\mu} \longrightarrow X_{\lambda,\nu}	
	\end{array}	\]
From a combinatorial point of view, $\leqslant^\lambda$ is the degeneration of the dominance order associated to the set $\Phi^+(\lambda)$ of the positive roots of $\Phi$ which are non-orthogonal to $\lambda$: if $\mu, \nu \in \Lambda$, then
$\nu \leqslant^\lambda \mu$ if and only if $\mu - \nu \in \mathbb{N} \Phi^+(\lambda)$.

In case $\lambda$ is regular, then $\leqslant^\lambda$ coincides with the usual dominance order $\leqslant$, while if $\lambda = 0$ then $\leqslant^\lambda$ is the trivial order. In the general case of a (possibly non-adjoint) simple subset $\Pi$, the partial order $\leqslant^\lambda$ was used in \cite{GR} to characterize combinatorially the normality of the variety $X_\Pi$.

If $\Pi \subset \Lambda^+$ is an adjoint simple subset with maximal element $\lambda$, denote
\[	\Pi_{\mathrm{red}} = \{ \mu \in \Pi \smallsetminus \Pi^+_\mathrm{tr}(\lambda) \, : \,  \mu \text{ is maximal w.r.t. } \leqslant^\lambda \} \cup \{\lambda\}.	\]
In case $\Pi = \Pi_{\mathrm{red}}$ we say that $\Pi$ is a \textit{reduced adjoint subset}. If $\Pi'$ is another adjoint simple subset with maximal element $\lambda'$, then we say that $\Pi$ and $\Pi'$ are \textit{equivalent} (and we write $\Pi \sim \Pi'$) if $\Supp(\lambda') = \Supp(\lambda)$ and $\Pi' - \lambda' = \Pi - \lambda$.

\begin{teo2} [see Corollary \ref{cor III: morfismi gen}]
Let $G = \mathrm{Spin(2r+1)}$ and let $\Pi, \Pi'\subset \Lambda^+$ be adjoint simple subsets with maximal elements resp. $\lambda$ and $\lambda'$.
\begin{itemize}
		\item[i)] Suppose that $\Supp(\lambda) = \Supp(\lambda')$. There exists an equivariant morphism $X_\Pi \to X_{\Pi'}$ if and only if for every $\mu' \in \Pi'_\mathrm{red}$ there exists $\mu \in \Pi_\mathrm{red}$ such that $\mu' - \lambda' \leqslant^\lambda \mu - \lambda$.
		\item[ii)] The varieties $X_\Pi$ and $X_{\Pi'}$ are equivariantly isomorphic if and only if $\Pi_\mathrm{red} \sim \Pi'_\mathrm{red}$.
\end{itemize}
\end{teo2}

As a corollary it follows that the simple linear compactifications of $\mathrm{SO}(2r+1)$ are classified by simple reduced subsets up to equivalence.

The paper is organized as follows. In Section 1, we study the compactifications $X_\lambda$ and $X_\Pi$ in full generality: throughout this section (and only in this section) $G$ will denote an arbitrary simply connected semisimple algebraic group. In Section 2, we describe combinatorially the set of trivial weights $\Pi^+_{\mathrm{tr}}(\lambda)$, where $\lambda$ is a dominant weight for $\mathrm{Spin}(2r+1)$, and we prove Theorem 1. In Section 3, we introduce the reduction of a simple subset and we characterize combinatorially the existence of an equivariant morphism between two simple linear compactifications of $\mathrm{SO}(2r+1)$ possessing isomorphic closed orbits and we prove Theorem 2. In Section 4, we give examples by means of tables in the case of the simple linear compactifications of $\mathrm{SO}(7)$ and of $\mathrm{SO}(9)$.

Differently from the introduction, since we will only deal with adjoint compactifications, we will refer to \textit{simple adjoint sets} of dominant weights just as \textit{simple sets}. It will be also convenient to adopt a ``dual viewpoint'' in the definition of the variety $X_\Pi$: the simple modules $V(\mu)$ occurring in its definition will be substituted with their duals.\\

\textit{Acknowledgments.} I would like to thank A.~Maffei, who suggested the problem, for his advice and for his continuous support. As well, I thank the referee for his careful reading and useful comments.

\section{The varieties $X_\lambda$ and $X_\Pi$}

Let $G$ be a semisimple simply connected algebraic group over an algebraically closed field $\Bbbk$ of characteristic zero and denote $\mathfrak{g}$ its Lie algebra. Fix a maximal torus $T$ and a Borel subgroup $B\supset T$, denote $B^-$ the opposite Borel subgroup of $B$ w.r.t. $T$ and denote $U \subset B$ and $U^- \subset B^-$ the associated maximal unipotent subgroups. Correspondingly to the choice of $T$ and $B$, we fix $T\times T$ as a maximal torus and $B\times B^-$ as a Borel subgroup in $G\times G$. Denote $\Phi$ the root system associated to $T$ and $W$ the Weyl group of $\Phi$. Denote $\Delta$ the basis of $\Phi$ associated to $B$ and $\Phi^+$ the associated set of positive roots. In case $\Phi$ is irreducible and non-simply laced, then we write $\Phi^+ = \Phi^+_s \cup \Phi^+_l$, where $\Phi^+_s$ and $\Phi^+_l$ denote respectively the set of the positive short roots and that of the positive long roots. 

Denote $\Lambda$ the weight lattice of $\Phi$ and $\Lambda^+ \subset \Lambda$ the semigroup of the dominant weights associated to $\Delta$ and set $\Lambda_\mathbb{Q} = \Lambda \otimes \mathbb{Q}$. If $\lambda\in \Lambda^+$ then we denote by $V(\lambda)$ the simple $G$-module of highest weight $\lambda$, however, if we deal with different groups, we will use also the notation $V_G(\lambda)$. Let $\lambda \mapsto \lambda^*$ be the linear involution of $\Lambda$ defined by $V(\lambda)^* \simeq V(\lambda^*)$ for any dominant weight $\lambda$. If $\alpha\in \Delta$, we denote by $\omega_\alpha$ the associated fundamental weight and by $\{e_\alpha, \alpha^{\vee},f_\alpha\}$ an $\mathfrak{sl}(2)$-triple of $T$-weights $\alpha,0,-\alpha$. Given a weight $\lambda = \sum_{\alpha \in \Delta} n_\alpha \omega_\alpha \in \Lambda$, denote $\lambda^+ = \sum_{n_\alpha > 0} n_\alpha \omega_\alpha$ and $\lambda^- = \sum_{n_\alpha < 0} |n_\alpha| \omega_\alpha$. When we deal with explicit root systems, we use the numbering of simple roots and fundamental weights of Bourbaki \cite{Bo}.

If $\lambda\in \Lambda^+$, denote $\Pi(\lambda) \subset \Lambda$ the set of weights occurring in $V(\lambda)$, $\mathcal{P}(\lambda) \subset \Lambda_\mathbb{Q}$ the convex hull of $\Pi(\lambda)$ and $\Pi_\mathbb{Q}(\lambda) = \mathcal{P}(\lambda) \cap \Lambda$. Denote $\leqslant$ and $\leqslant_\mathbb{Q}$ resp. the \textit{dominance order} and the \textit{rational dominance order} on $\Lambda$, defined by $\mu \leqslant \lambda$ (resp. $\mu \leqslant_\mathbb{Q} \lambda$) if and only if $\lambda - \mu \in \mathbb{N} \Delta$ (resp. $\mathbb{Q}_{\geqslant 0} \Delta$). Then we have $\Pi(\lambda) = W \Pi^+(\lambda)$ and $\Pi_\mathbb{Q}(\lambda) = W\Pi_\mathbb{Q}^+(\lambda)$, where we denote
\begin{align*}
	\Pi^+(\lambda) &= \Pi(\lambda) \cap \Lambda^+ = \{ \mu \in \Lambda^+ \, : \, \mu \leqslant \lambda \}, \\	\Pi_\mathbb{Q}^+(\lambda) &= \mathcal{P}(\lambda) \cap \Lambda^+ = \{ \mu \in \Lambda^+ \, : \, \mu \leqslant_\mathbb{Q} \lambda \}.
\end{align*}

If $\lambda \in \Lambda$, define its \emph{support} as $\Supp(\lambda) = \{\alpha \in \Delta \, : \, \langle \lambda, \alpha^\vee \rangle \neq 0 \}$, while if $\theta = \sum_{\alpha \in \Delta} n_\alpha \alpha \in \mathbb{Z} \Delta$, define its \textit{support over} $\Delta$ as $\Supp_\Delta(\theta) = \{ \alpha \in \Delta \, : \, n_\alpha \neq 0 \}$. If $\lambda \in \Lambda$, denote $\Phi^+(\lambda) \subset \Phi^+$ the subset of the positive roots which are non-orthogonal to $\lambda$:
\[	\Phi^+(\lambda) = \{ \theta \in \Phi^+ \, : \, \Supp_\Delta(\theta) \cap \Supp(\lambda) \neq \varnothing \}.	\]
In case $\Phi$ is irreducible and non-simply laced, we also set $\Phi^+_s(\lambda) = \Phi^+_s \cap \Phi^+(\lambda)$ and $\Phi^+_l(\lambda) = \Phi^+_l \cap \Phi^+(\lambda)$.

If $\lambda$ is a non-zero dominant weight, consider the $G\times G$-variety
\[ X_\lambda = \overline{(G\times G)[\Id]}\subset \mathbb{P}(\End(V(\lambda)^*)), \]
Since $\mathbb{P}(\End(V(\lambda)^*))$ possesses a unique closed $G\times G$ orbit and since the diagonal of $G$ fixes the identity, it follows that $X_\lambda$ is a simple compactification of a quotient of the adjoint group $G_\mathrm{ad}$.

\begin{prop} [{\cite[Prop.~1.2]{BGMR}}] \label{prop I: morfismi X_lambda}
Let $\lambda, \mu \in \Lambda^+$. Then $X_\lambda \simeq X_\mu$ as $G\times G$-varieties if and only if $\Supp(\lambda) = \Supp(\mu)$.
\end{prop}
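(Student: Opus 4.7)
We handle the two implications separately; the first is straightforward, the second requires more delicate analysis.

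\emph{Necessity.} If $X_\lambda\simeq X_\mu$ as $G\times G$-varieties, any such isomorphism must identify the unique closed $G\times G$-orbits. The closed orbit of $X_\lambda$ coincides with the unique closed $G\times G$-orbit of the ambient $\mathbb{P}(\End(V(\lambda)^*))$: via the $G\times G$-equivariant identification $\End(V(\lambda)^*)\simeq V(\lambda)\otimes V(\lambda)^*$ (external tensor product of $G$-modules) this orbit is the Segre product $G/Q\times G/Q'$, where $Q$ and $Q'$ are the standard parabolics stabilizing the highest weight lines of $V(\lambda)$ and of $V(\lambda)^*$ respectively. Both $Q$ and $Q'$ depend only on $\Supp(\lambda)$, since their Levi subgroups are generated by $T$ and the root subgroups indexed by $\Delta\smallsetminus\Supp(\lambda)$ (respectively by $\Delta\smallsetminus\Supp(\lambda^*)=\Delta\smallsetminus(-w_0\Supp(\lambda))$). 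A standard argument then shows that two standard parabolics $P,Q\supset B$ with $G/P\simeq G/Q$ as $G$-varieties must coincide, so $\Supp(\lambda)=\Supp(\mu)$.

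\emph{Sufficiency.} Assume $\Supp(\lambda)=\Supp(\mu)$. A preliminary observation: both $X_\lambda$ and $X_\mu$ share the same open $G\times G$-orbit, isomorphic to $G_\mathrm{ad}$, since the stabilizer of $[\Id]$ in $G\times G$ is $\Delta(G)\cdot(Z(G)\times\{1\})$ independently of the weight. The plan is to produce $G\times G$-equivariant birational morphisms $\varphi\colon X_\lambda\to X_\mu$ and $\psi\colon X_\mu\to X_\lambda$, both sending $[\Id]\mapsto[\Id]$; by density of the open orbit and equivariance, $\psi\circ\varphi$ must be the identity on $X_\lambda$ and similarly for the opposite composition, yielding the desired isomorphism. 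To build $\varphi$, the technique is to exploit Cartan component inclusions: for sufficiently large integers $n,m$ the weight $n\lambda-m\mu$ is dominant with $\Supp(n\lambda-m\mu)\subseteq\Supp(\lambda)$, and the inclusions $V(n\lambda)\hookrightarrow V(m\mu)\otimes V(n\lambda-m\mu)$ together with $V(n\lambda)\hookrightarrow V(\lambda)^{\otimes n}$ (and the analogous constructions for $\mu$) induce, via Veronese-type maps and the $G\times G$-equivariant projections of the associated $\End$-modules, a $G\times G$-equivariant rational map $\mathbb{P}(\End(V(\lambda)^*))\dashrightarrow\mathbb{P}(\End(V(\mu)^*))$ sending $[\Id]$ to $[\Id]$.

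The heart of the sufficiency argument, and its main technical obstacle, is to show that this rational map extends to a morphism on the whole $X_\lambda$. Since $X_\lambda$ is in general non-normal, extendability does not follow from bijectivity alone, and the argument must be carried out locally. On a $(B^-\times B)$-invariant affine open neighborhood of the base point of the closed orbit, the local structure theorem of Timashev \cite{Ti} provides explicit coordinates in which the constructed rational map can be shown to be regular by direct computation; equivariance then propagates regularity to all of $X_\lambda$. The same argument applies to $\psi$, and the proof concludes as indicated above. Note that the clean alternative of looking for a $G\times G$-equivariant linear map $\End(V(\lambda)^*)\to\End(V(\mu)^*)$ between the ambient spaces is doomed to fail: these are simple external tensor product representations of $G\times G$, so by Schur they admit no nonzero intertwiner unless $\lambda=\mu$.
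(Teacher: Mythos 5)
The paper itself offers no proof of this statement; it is quoted from \cite[Prop.~1.2]{BGMR}, so the only thing to assess is whether your argument would stand on its own. Your necessity direction is fine: the closed orbit of $X_\lambda$ is the unique closed orbit $G/Q\times G/Q'$ of the ambient projective space, $Q$ and $Q'$ are standard parabolics determined by $\Supp(\lambda)$, and an equivariant isomorphism of the compactifications forces an equivariant isomorphism of the closed orbits, hence equality of the parabolics.

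The sufficiency direction, however, has a genuine gap exactly where you locate "the heart" of the argument. Producing a $G\times G$-equivariant birational map $X_\lambda\dashrightarrow X_\mu$ costs nothing -- both varieties contain the same open orbit $G_{\mathrm{ad}}$, so the identity already is such a map, and the Cartan-component/Veronese apparatus you describe adds no information (note also that the inclusion $V(n\lambda)\hookrightarrow V(m\mu)\otimes V(n\lambda-m\mu)$ gives a projection of $\End$-spaces pointing the wrong way for a map toward $\mathbb{P}(\End(V(\mu)^*))$). The entire content of the proposition is the regularity of this birational map on the non-normal variety $X_\lambda$, equivalently the inclusion $\Bbbk[X^\circ_\mu]\subset\Bbbk[X^\circ_\lambda]$ of the coordinate rings of the $B\times B^-$-stable charts, and this you only assert, deferring to a "direct computation" in Timashev's local coordinates that is never carried out. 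One cannot shortcut this via the tensorial criterion of Proposition \ref{prop I: criterio tensoriale}, since its proof invokes the present proposition. The actual argument in \cite{BGMR} does indeed use the local structure theorem, showing $X^\circ_\lambda\simeq U^-\times Z_\lambda\times U$ with $Z_\lambda$ the affine toric variety of the semigroup generated by the differences $\lambda-\nu$ for $\nu$ a weight of $V(\lambda)$, and then verifying that this semigroup depends only on $\Supp(\lambda)$; that verification is the missing step in your proposal, and without it the sufficiency direction is a plan rather than a proof.
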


If $\lambda \in \Lambda^+$ is regular (i.e. if $\Supp(\lambda) = \Delta$), then $X_\lambda$ coincides with the wonderful compactification of $G_\mathrm{ad}$ introduced by De~Concini and Procesi in \cite{CP}. We will denote this variety by $M$: it is smooth and the complement of its open orbit is the union of smooth prime divisors with normal crossings whose intersection is the closed orbit $G/B \times G/B$.

Since $G$ is semisimple and simply connected, we may identify the Picard group $\Pic(G/B)$ with the weight lattice $\Lambda$: we identify a weight $\lambda\in \Lambda$ with the line bundle on $G/B$ whose $T$-weight in the $B$-fixed point is $-\lambda$. The restriction of line bundles to the closed orbit induces an homomorphism
\[	\omega : \Pic(M) \longrightarrow \Lambda \times \Lambda	\]
which is injective and which identifies $\Pic(M)$ with the sublattice $\{ (\lambda,\lambda^*) \, : \, \lambda \in \Lambda\} \subset \Lambda \times \Lambda$. Therefore $\Pic(M)$ is identified with $\Lambda$ and we will still denote by
$\mathcal{L}_\lambda \in \Pic(M)$ the line bundle whose image is $(\lambda, \lambda^*)$. A line bundle $\mathcal{L}_\lambda \in \Pic(M)$ is generated by its sections if and only if $\lambda\in \Lambda^+$, in which case, as a $G\times G$-module, the following decomposition holds (\cite[Theorem 8.3]{CP}):
\[	\Gamma(M,\mathcal{L}_\lambda) \simeq \bigoplus_{\mu \in \Pi^+(\lambda)} \End(V(\mu)).	\]

Fix now $\lambda \in \Lambda^+$ (possibly non-regular). Then the map $G_\mathrm{ad} \to \mathbb{P}(\End(V(\lambda)^*))$ extends to a map $M \to \mathbb{P}(\End(V(\lambda)^*))$ whose image is $X_\lambda$ and such that $\mathcal{L}_\lambda$ is the pullback of the hyperplane bundle on $\mathbb{P}(\End(V(\lambda)^*))$. If we pull back the homogeneous coordinates of $\mathbb{P}(\End(V(\lambda)^*))$ to $M$, we get a submodule of $\Gamma(M,\mathcal{L}_\lambda)$ which is isomorphic to $\End(V(\lambda))$; by abuse of notation we still denote this submodule by $\End(V(\lambda))$.

Consider the algebra
\[	\widetilde{A}(\lambda) = \bigoplus_{n\in \mathbb{N}} \Gamma(M, \mathcal{L}_{n\lambda})	\]
and denote $A(\lambda) \subset \widetilde{A}(\lambda)$ the subalgebra generated by $\End(V(\lambda)) \subset \Gamma(M,\mathcal{L}_\lambda)$; consider the natural gradings on $\widetilde{A}(\lambda)$ and $A(\lambda)$ respectively defined by $\widetilde{A}_n(\lambda) = \Gamma(M, \mathcal{L}_{n\lambda})$ and $A_n(\lambda) = \widetilde{A}_n(\lambda)\cap A(\lambda)$. Since $A(\lambda)$ is the projective coordinate ring of $X_\lambda$, if we set $\widetilde{X}_\lambda= \Proj \widetilde{A}(\lambda)$ then we get a commutative diagram as follows:
\[	\xymatrix{M \ar@{->>}[rr] \ar@{->>}[drr] & & \widetilde{X}_\lambda \ar@{->>}[d] \\	& & X_\lambda}	\]
In \cite{Ka}, Kannan shown that $\widetilde{A}(\lambda)$ is generated in degree $1$, while in \cite{DC} De~Concini shown that $\widetilde{X}_\lambda \longrightarrow X_\lambda$ is the normalization.

If $\lambda \in \Lambda^+$, denote $\Id_\lambda \in \End(V(\lambda)^*)$ the identity. Similarly, if $\Pi \subset \Lambda^+$ is a finite subset, denote $\mathrm{Id}_\Pi$ the identity vector $(\mathrm{Id}_\mu)_{\mu \in \Pi} \in \bigoplus_{\mu \in \Pi} \End(V(\mu))$. Given such a subset $\Pi$, consider the $G\times G$-variety
\[
	X_\Pi = \overline{(G\times G)[\Id_\Pi]} \subset \mathbb{P}\Big(\bigoplus_{\mu \in \Pi} \End(V(\mu)^*)\Big).
\]
If $\Pi = \{\lambda\}$, then we get the variety $X_\lambda$, while if $\Pi = \Pi^+(\lambda)$ we get its normalization $\widetilde{X}_\lambda$. If $\Pi = \{\mu_1, \ldots, \mu_m\}$, for simplicity we will denote $X_\Pi$ also by $X_{\mu_1, \ldots, \mu_m}$. We say that the variety $X_\Pi$ is \textit{simple} if it contains a unique closed $G\times G$ orbit, while we say that $X_\Pi$ is \textit{adjoint} if it is a compactification of a quotient of $G_\mathrm{ad}$.

\begin{prop} [{\cite[\S 8]{Ti}}]	\label{prop I: timashev}
Let $\Pi \subset \Lambda^+$ be a finite subset and denote $\mathcal P(\Pi) \subset \mathbb{Q}\Delta$ the polytope generated by the $T$-weights occurring in the $G$-module $\oplus_{\mu \in \Pi}V(\mu)$.
\begin{itemize}
	\item[i)] $X_\Pi$ is adjoint if and only if $\Pi$ is contained in a coset of $\Lambda /\mathbb{Z}\Delta$.
	\item[ii)] Let $\mu \in \Pi$. Then $X_\Pi$ contains the closed orbit of $\mathbb{P}(\End(V(\mu)^*)$ if and only $\mu$ is an extremal vertex of $\mathcal P(\Pi)$.
\end{itemize}
\end{prop}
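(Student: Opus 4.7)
I would prove the two parts independently.

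\emph{Part (i).} The diagonal of $G$ fixes $[\mathrm{Id}_\Pi]$, so $X_\Pi$ is automatically a compactification of a quotient of $G$; it is a compactification of a quotient of $G_\mathrm{ad}$ iff the $(G\times G)$-action factors through $G_\mathrm{ad}\times G_\mathrm{ad}$, equivalently iff $Z\times Z$ (with $Z = Z(G)$) fixes $[\mathrm{Id}_\Pi]$. A direct computation from $(g_1,g_2)\cdot f = g_1 f g_2^{-1}$ yields $(z_1,z_2)\cdot \mathrm{Id}_\mu = \chi_\mu(z_1)^{-1}\chi_\mu(z_2)\,\mathrm{Id}_\mu$, where $\chi_\mu$ is the central character of $V(\mu)$. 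These scalars coincide across $\mu\in\Pi$ for all $z_i\in Z$ iff all restrictions $\chi_\mu|_Z$ agree; since $X^*(Z) = \Lambda/\mathbb{Z}\Delta$ for simply connected $G$, this is precisely the condition that $\Pi$ is contained in a single coset of $\Lambda/\mathbb{Z}\Delta$.

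\emph{Part (ii).} The unique closed $(G\times G)$-orbit of $\mathbb{P}(\End(V(\mu)^*))$ is the orbit of the $(B\times B^-)$-highest weight line $\ell_\mu$, of $T\times T$-weight $(-w_0\mu,\,w_0\mu)$, sitting in the $\mu$-summand of $\bigoplus_{\nu\in\Pi}V(\nu)^*\otimes V(\nu)$ and zero elsewhere; containment of this orbit in $X_\Pi$ reduces to $\ell_\mu\in X_\Pi$. Since $\ell_\mu$ is $T\times T$-fixed, a Hilbert--Mumford-type argument (using that $(B\times B^-)\cdot[\mathrm{Id}_\Pi]$ is open in $X_\Pi$) gives that $\ell_\mu\in X_\Pi$ iff $\ell_\mu = \lim_{s\to 0}\rho(s)[\mathrm{Id}_\Pi]$ for some one-parameter subgroup $\rho = (\rho_1,\rho_2)\in X_*(T\times T)$. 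The $T\times T$-weights of $\mathrm{Id}_\Pi = \sum_{\nu\in\Pi}\sum_\xi e_\xi^{(\nu),*}\otimes e_\xi^{(\nu)}$ form the antidiagonal set $\{(-\xi,\xi):\xi\in\Pi(\nu),\ \nu\in\Pi\}$, and the projective limit equals the sum of components attaining the minimum $\rho$-weight. Setting $\rho' = \rho_2-\rho_1\in X_*(T)$, the limit equals $\ell_\mu$ iff $\langle\rho',\cdot\rangle$ is uniquely minimized at $w_0\mu$ on the $W$-invariant set $\bigcup_{\nu\in\Pi}\Pi(\nu)$; such $\rho'$ exists iff $w_0\mu$ is a vertex of its convex hull, which by $W$-symmetry is equivalent to $\mu$ being an extremal vertex of $\mathcal{P}(\Pi)$.

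\emph{Main obstacle.} The two delicate points are the Hilbert--Mumford reduction in part (ii) (standard but requiring the $(B\times B^-)$-density of $[\mathrm{Id}_\Pi]$, which follows from a quick dimension count showing its $(B\times B^-)$-stabilizer is $\Delta T$) and verifying that the minimum is genuinely \emph{unique} when $\mu$ is a vertex. For the latter, one must show $w_0\mu\notin\Pi(\nu)$ for any $\nu\in\Pi\setminus\{\mu\}$: since the vertices of $\mathcal{P}(\{\nu\})$ are exactly $W\cdot\nu$, if $\mu$ is dominant and $\mu\in\Pi(\nu)$ with $\nu\neq\mu$ dominant, then $\mu$ lies in the relative interior of $\mathcal{P}(\{\nu\})\subseteq\mathcal{P}(\Pi)$, contradicting $\mu$ being a vertex of $\mathcal{P}(\Pi)$.
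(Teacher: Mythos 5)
The paper does not actually prove this proposition: it is imported verbatim from Timashev \cite[\S 8]{Ti}, so there is no internal argument to measure yours against. On its own terms, your part (i) is correct and is the standard computation with central characters. In part (ii), the direction ``$\mu$ a vertex $\Rightarrow$ the closed orbit lies in $X_\Pi$'' is also sound: the limit $\lim_{s\to 0}(1,\rho'(s))[\mathrm{Id}_\Pi]$ along a cocharacter $\rho'$ whose pairing is uniquely minimized on $\bigcup_\nu\Pi(\nu)$ at $w_0\mu$ does equal $\ell_\mu$, and you correctly isolate the point that no summand other than the $\mu$-summand may contribute in weight $(-w_0\mu,w_0\mu)$. (One small misstatement there: a dominant $\mu\in\Pi(\nu)$ with $\nu\neq\mu$ need not lie in the \emph{relative interior} of $\mathcal P(\{\nu\})$ --- for $\mathrm{SL}(3)$, $\omega_2$ sits on an edge of $\mathcal P(\{2\omega_1\})$ --- but it is never a vertex of $\mathrm{conv}(W\nu)$, since those vertices are exactly $W\nu$ and the only dominant element of $W\nu$ is $\nu$; that weaker statement is all you need, so the slip is harmless.)

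The genuine gap is the converse, which you rest entirely on the claim that $\ell_\mu\in X_\Pi$ forces $\ell_\mu=\lim_{s\to 0}\rho(s)[\mathrm{Id}_\Pi]$ for some $\rho\in X_*(T\times T)$, i.e.\ $\ell_\mu\in\overline{(T\times T)[\mathrm{Id}_\Pi]}$. This is true but is not delivered by the Hilbert--Mumford theorem: that theorem produces a destabilizing one-parameter subgroup of $G\times G$, and after conjugating it into $T\times T$ the base point becomes $g\cdot[\mathrm{Id}_\Pi]$, whose $T\times T$-weight support is no longer the antidiagonal set $\{(-\xi,\xi)\}$ --- which is precisely the feature your polytope computation uses. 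Nor does openness of the $B\times B^-$-orbit rescue it cheaply: if $\mu$ is not a vertex, write $N\mu=\sum m_i\xi_i$ with $\xi_i\in\bigcup_\nu\Pi(\nu)\smallsetminus\{\mu\}$ and form the $T\times T$-invariant function $\prod_i\tau_i^{m_i}/\phi_\mu^N$ on the chart $\{\phi_\mu\neq 0\}$ (with $\tau_i$ a weight covector of weight $(\xi_i,-\xi_i)$ not vanishing on $\mathrm{Id}_\Pi$); it separates $[\mathrm{Id}_\Pi]$ from the torus-fixed point of the closed orbit, but it is not a $B\times B^-$-eigenvector, hence not constant on the open $B\times B^-$-orbit, and the contradiction does not propagate to $X_\Pi$. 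What actually closes this in \cite{Ti} is the local structure theorem: a $B\times B^-$-stable affine chart around the closed orbit factors as a product of a unipotent part and the toric slice $\overline{(T\times T)[\mathrm{Id}_\Pi]}$ intersected with the chart, so every closed $G\times G$-orbit is forced to meet that toric slice, whose fixed points are exactly the vertices. You need to invoke this (or an equivalent statement from the Luna--Vust theory); as written, ``a Hilbert--Mumford-type argument gives\dots'' is asserting the entire hard direction.
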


It follows by previous proposition that $X_\Pi$ is simple if and only if $\Pi$ possesses a unique maximal element w.r.t. $\leqslant_\mathbb{Q}$, whereas it is simple and adjoint if and only if $\Pi$ contains a unique maximal element w.r.t. $\leqslant$. Correspondingly, we will say that $\Pi$ is \textit{adjoint} if it is contained in a coset of $\Lambda/\mathbb{Z}\Delta$, and we say that an adjoint subset is \textit{simple} if it possesses a unique maximal element w.r.t. $\leqslant$. Since we will deal only with adjoint subsets, for simplicity we will refer to adjoint simple subsets just as \textit{simple subsets}. For a general treatment on the case of a possibly non-adjoint simple linear group compactification see \cite{GR}.

Suppose that $\Pi \subset \Lambda^+$ is simple with maximal element $\lambda$ and consider the line bundle $\mathcal{L}_\lambda \in \Pic(M)$. By its decomposition, it follows that $\Gamma(M,\mathcal{L}_\lambda)$ possesses a $G\times G$ submodule $A_1(\Pi)$ isomorphic to $\bigoplus_{\mu \in \Pi} \End(V(\mu))$, which is base point free since $\lambda \in \Pi$. On the other hand $\Gamma(M,\mathcal L_\lambda) = A_1(\Pi^+(\lambda))$ and $X_{\Pi^+(\lambda)} \simeq \widetilde X_\lambda$, hence we get morphisms
\[	M \longrightarrow \widetilde X_\lambda \longrightarrow X_\Pi \longrightarrow X_\lambda	\]
and it follows that $\widetilde X_\lambda \longrightarrow X_\Pi$ is the normalization.  Denote $A(\Pi)$ the projective coordinate ring of $X_\Pi$, namely the subalgebra of $\widetilde{A}(\lambda)$ generated by $\bigoplus_{\mu \in \Pi} \End(V(\mu))$, and denote $A_n(\Pi) = \widetilde{A}_n(\lambda) \cap A(\Pi)$.

If $\mu \in \Pi$, denote $\phi_\mu \in \End(V(\mu))$ a highest weight vector. Consider the $B\times B^-$-stable affine open subsets $X_\lambda^\circ \subset X_\lambda$ and $X_\Pi^\circ \subset X_\Pi$ defined by the non-vanishing of $\phi_\lambda$: then we get
\[
	\Bbbk[X_\Pi^\circ] = \left\{ \frac{\phi}{\phi_\lambda^n} \, : \, \phi \in A_n(\Pi), n \in \mathbb{N} \right\} \supset \left\{ \frac{\phi}{\phi_\lambda^n} \, : \, \phi \in A_n(\lambda), n \in \mathbb{N}  \right\} = \Bbbk[X_\lambda^\circ].	\]
Previous rings are not $G\times G$-module, however they are $\mathfrak{g}\oplus \mathfrak{g}$-modules.

\begin{lem} \label{lem I: generatori X-lambda-mu}
Let $\Pi \subset \Lambda^+$ be simple with maximal element $\lambda$. As a $\mathfrak{g} \oplus \mathfrak{g}$-algebra, $\Bbbk[X^\circ_\Pi]$ is generated by $\Bbbk[X^\circ_{\lambda}]$ together with the set $\{\phi_\mu/\phi_\lambda\}_{\mu \in \Pi}$.
\end{lem}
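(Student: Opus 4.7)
The plan is to show the nontrivial inclusion: let $R \subseteq \Bbbk[X_\Pi^\circ]$ denote the $\mathfrak{g}\oplus\mathfrak{g}$-subalgebra generated by $\Bbbk[X_\lambda^\circ]$ together with $\{\phi_\mu/\phi_\lambda\}_{\mu \in \Pi}$, and prove $R = \Bbbk[X_\Pi^\circ]$. The converse inclusion is immediate, since $\Bbbk[X_\lambda^\circ] \subset \Bbbk[X_\Pi^\circ]$ and each $\phi_\mu/\phi_\lambda$ lies in $\Bbbk[X_\Pi^\circ]$.

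First I would reduce to a single statement at the level of $\End(V(\mu))$. Since $A(\Pi)$ is by definition the subalgebra of $\widetilde A(\lambda)$ generated by $\bigoplus_{\mu \in \Pi} \End(V(\mu))$, its degree $n$ component $A_n(\Pi)$ is spanned by products $\psi_1 \cdots \psi_n$ with $\psi_i \in \End(V(\mu_i))$ for some $\mu_i \in \Pi$. An arbitrary element of $\Bbbk[X_\Pi^\circ]$ has the form $\phi/\phi_\lambda^n$ with $\phi \in A_n(\Pi)$, and so it decomposes as a sum of monomials $(\psi_1/\phi_\lambda) \cdots (\psi_n/\phi_\lambda)$. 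Hence it is enough to show that $\psi/\phi_\lambda \in R$ for every $\mu \in \Pi$ and every $\psi \in \End(V(\mu))$.

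The key input is that $\End(V(\mu)) \simeq V(\mu)\otimes V(\mu)^*$ is an irreducible $G\times G$-module, hence generated as a $\mathfrak{g}\oplus\mathfrak{g}$-module by its highest weight vector $\phi_\mu$. I would then induct on the length of a monomial in $U(\mathfrak{g}\oplus\mathfrak{g})$ needed to reach $\psi$ from $\phi_\mu$, the base case $\psi = \phi_\mu$ being clear as $\phi_\mu/\phi_\lambda \in R$ by definition. For the inductive step, given $\xi \in \mathfrak{g}\oplus\mathfrak{g}$, the Leibniz rule gives
\[
	\xi \cdot \bigl(\psi/\phi_\lambda\bigr) \;=\; (\xi\psi)/\phi_\lambda \;-\; (\psi/\phi_\lambda)\,(\xi\phi_\lambda/\phi_\lambda),
\]
whence
\[
	(\xi\psi)/\phi_\lambda \;=\; \xi\cdot\bigl(\psi/\phi_\lambda\bigr) \;+\; (\psi/\phi_\lambda)\,(\xi\phi_\lambda/\phi_\lambda).
\]
By induction the first summand lies in $R$. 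For the second, observe that $\End(V(\lambda)) = A_1(\lambda)$ is a $G\times G$-submodule of $\Gamma(M,\mathcal L_\lambda)$, so $\xi\phi_\lambda \in \End(V(\lambda))$ and consequently $\xi\phi_\lambda/\phi_\lambda \in \Bbbk[X_\lambda^\circ] \subseteq R$; combined with $\psi/\phi_\lambda \in R$ (inductive hypothesis), the product stays in $R$.

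The main delicate point is precisely this last remark: the induction closes only because derivatives of $\phi_\lambda$ remain in $A_1(\lambda) = \End(V(\lambda))$, ensuring that the Leibniz correction $(\xi\phi_\lambda/\phi_\lambda)$ is already controlled by the subalgebra $\Bbbk[X_\lambda^\circ]$ and does not force us to enlarge the generating set beyond the $\phi_\mu/\phi_\lambda$ with $\mu \in \Pi$.
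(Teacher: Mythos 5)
Your proof is correct and takes essentially the same route as the paper's: reduce to the degree-one generators using that $A(\Pi)$ is generated by $A_1(\Pi)=\bigoplus_{\mu\in\Pi}\End(V(\mu))$, then reach every $\psi/\phi_\lambda$ from the highest weight vectors $\phi_\mu/\phi_\lambda$ by applying $\mathfrak{g}\oplus\mathfrak{g}$ and the quotient rule, the key point in both arguments being that $\xi\phi_\lambda$ stays in $\End(V(\lambda))$ so the Leibniz correction $\xi\phi_\lambda/\phi_\lambda$ already lies in $\Bbbk[X^\circ_\lambda]$.
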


\begin{proof}
Since the projective coordinate ring $A(\Pi)$ is generated by its degree one component $A_1(\Pi) = \bigoplus_{\mu\in \Pi} \End(V(\mu))$, it follows that $\Bbbk[X^\circ_\Pi]$ is generated as an algebra by its subset $B(\Pi) = \left\{\phi/\phi_\lambda \, : \, \phi \in A_1(\Pi) \right\}$. Using the action of $\mathfrak{g} \oplus \mathfrak{g}$, let's show that $B(\Pi)$ is contained in the $\mathfrak{g} \oplus \mathfrak{g}$-subalgebra $B'(\Pi)\subset \Bbbk[X^\circ_\Pi]$ generated by $\Bbbk[X^\circ_{\lambda}]$ together with $\{\phi_\mu/\phi_\lambda\}_{\mu \in \Pi}$. Suppose indeed that $\alpha$ is a simple root and that $\phi/\phi_\lambda \in B'(\Pi)$: then $f_\alpha(\phi)/\phi_\lambda \in B'(\Pi)$ as well since 
\[	\frac{f_\alpha(\phi)}{\phi_\lambda} = f_\alpha\left(\frac{\phi}{\phi_\lambda}\right) + \frac{\phi}{\phi_\lambda} \cdot \frac{f_\alpha(\phi_\lambda)}{\phi_\lambda}. \qedhere	\]
\end{proof}

Given $\lambda, \mu \in \Lambda^+$, consider the multiplication map
\[	m_{\lambda, \mu} : \Gamma(M,\mathcal{L}_\lambda) \times \Gamma(M,\mathcal{L}_\mu) \longrightarrow \Gamma(M,\mathcal{L}_{\lambda+\mu}),	\]
which is surjective by \cite{Ka}. In order to describe combinatorially $m_{\lambda, \mu}$, as in \cite{Ka} or in \cite{DC} it is possible to identify sections of a line bundle on $M$ with functions on $G$ and use the description of the multiplication of matrix coefficients. Recall that as a $G\times G$-module it holds the decomposition
\[	\Bbbk[G]=\bigoplus_{\lambda\in \Lambda^+}\End(V(\lambda)) \simeq \bigoplus_{\lambda\in \Lambda^+} V(\lambda)^*\otimes V(\lambda).	\]
More explicitly if $V$ is a $G$-module, define the matrix coefficient $c_V:V^*\otimes V \to \Bbbk[G]$ by $c_V(\psi \otimes v)(g)= \langle \psi, gv\rangle$. If we multiply functions in $\Bbbk[G]$ of this type then we get
\[	c_{V}( \psi \otimes v) \cdot c_{W}(\chi \otimes w) = c_{V\otimes W} \big( (\psi\otimes\chi) \otimes (v\otimes w) \big):	\]
in particular we get that the image of the multiplication $\End(V(\lambda)) \otimes \End(V(\mu)) \to \Bbbk[G]$ is the sum of all $\End(V(\nu))$ with $V(\nu)\subset V(\lambda)\otimes V(\mu)$. We get then the following combinatorial description of $m_{\lambda, \mu}$.

\begin{lem}[{\cite[Lemma~3.4]{DC}, \cite[Lemma~3.1]{Ka}}] \label{lem I: coefficientimatriciali}
Let $\lambda, \mu \in \Lambda^+$, $\lambda' \in \Pi^+(\lambda)$ and $\mu' \in \Pi^+(\mu)$. Then the image of $\End(V(\lambda')) \times \End(V(\mu')) \subset \Gamma(M, \mathcal{L}_{\lambda}) \times \Gamma(M, \mathcal{L}_{\mu})$ in $\Gamma(M, \mathcal{L}_{\lambda + \mu})$ via $m_{\lambda,\mu}$ is
\[	\bigoplus_{V(\nu)\subset V(\lambda')\otimes V(\mu')} \End(V(\nu))	\]
\end{lem}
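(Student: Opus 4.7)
The plan is to reduce the computation of $m_{\lambda,\mu}$ to pointwise multiplication of matrix coefficients in $\Bbbk[G]$, exactly in the spirit of the formula recalled just above the statement. First, I would identify $\Gamma(M,\mathcal{L}_\lambda)$ with a $G\times G$-submodule of $\Bbbk[G]$ via restriction to the open orbit $G_\mathrm{ad}\subset M$, after trivializing $\mathcal{L}_\lambda$ on that orbit by the canonical section obtained from evaluating at the point $[\Id]$. Under this identification the decomposition $\Gamma(M,\mathcal{L}_\lambda) \simeq \bigoplus_{\lambda'\in\Pi^+(\lambda)}\End(V(\lambda'))$ corresponds precisely to the inclusion of isotypic components $c_{V(\lambda')}(V(\lambda')^*\otimes V(\lambda'))\subset\Bbbk[G]$ in the Peter-Weyl decomposition. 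Performing the same procedure for $\mu$ and $\lambda+\mu$, and checking that the trivializations of $\mathcal{L}_\lambda|_{G_\mathrm{ad}}$ and $\mathcal{L}_\mu|_{G_\mathrm{ad}}$ multiply to that of $\mathcal{L}_{\lambda+\mu}|_{G_\mathrm{ad}}$, the map $m_{\lambda,\mu}$ becomes pointwise multiplication of functions on $G$.

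Next I would apply the matrix-coefficient product formula $c_V(\psi\otimes v)\cdot c_W(\chi\otimes w)=c_{V\otimes W}\bigl((\psi\otimes\chi)\otimes(v\otimes w)\bigr)$ with $V=V(\lambda')$ and $W=V(\mu')$. This shows that the image of $\End(V(\lambda'))\otimes\End(V(\mu'))$ under pointwise multiplication coincides with the image of $(V(\lambda')\otimes V(\mu'))^*\otimes(V(\lambda')\otimes V(\mu'))$ under $c_{V(\lambda')\otimes V(\mu')}$. Decomposing $V(\lambda')\otimes V(\mu')\simeq\bigoplus V(\nu)$ into irreducibles and using that $c_{-}$ respects direct sums (the Peter-Weyl summands of $\Bbbk[G]$ being independent), this image equals $\bigoplus_{V(\nu)\subset V(\lambda')\otimes V(\mu')}\End(V(\nu))$ as a subspace of $\Bbbk[G]$.

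Finally, since $m_{\lambda,\mu}$ takes values in $\Gamma(M,\mathcal{L}_{\lambda+\mu})$, which embeds into $\Bbbk[G]$ as $\bigoplus_{\nu\in\Pi^+(\lambda+\mu)}\End(V(\nu))$, the direct sum just computed automatically sits inside $\Gamma(M,\mathcal{L}_{\lambda+\mu})$ and, under the identification, coincides with the claimed subspace; in particular every $\nu$ with $V(\nu)\subset V(\lambda')\otimes V(\mu')$ belongs to $\Pi^+(\lambda+\mu)$. The main obstacle is purely bookkeeping: setting up the identifications $\Gamma(M,\mathcal{L}_\lambda)\hookrightarrow\Bbbk[G]$ compatibly for $\lambda$, $\mu$, and $\lambda+\mu$ so that pointwise multiplication on $G$ corresponds to $m_{\lambda,\mu}$, and so that a highest weight vector $\phi_\lambda\in\End(V(\lambda))$ matches the standard generator of the isotypic component in $\Bbbk[G]$. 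This is carried out in the references \cite{Ka} and \cite{DC} and is essentially a restatement of how De~Concini-Procesi describe sections of $\mathcal{L}_\lambda$ as regular functions on $G$.
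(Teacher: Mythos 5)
Your proposal is correct and follows essentially the same route the paper takes: the paper does not prove this lemma in detail (it cites \cite{DC} and \cite{Ka}), but the discussion immediately preceding the statement is exactly your argument --- identify sections of $\mathcal{L}_\lambda$ with matrix coefficients in $\Bbbk[G]$, apply the product formula $c_{V}(\psi\otimes v)\cdot c_{W}(\chi\otimes w)=c_{V\otimes W}\bigl((\psi\otimes\chi)\otimes(v\otimes w)\bigr)$, and read off the image from the isotypic decomposition of $V(\lambda')\otimes V(\mu')$. Your bookkeeping of the trivializations and the observation that the cross terms between distinct irreducible summands vanish fill in precisely the details the paper delegates to the references.
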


If $\Pi \subset \Lambda^+$ is simple with maximal element $\lambda$, define
\[	\Omega(\Pi) = \Big\{ \nu - n\lambda \, : \, V(\nu) \subset \Big(\bigoplus_{\mu \in \Pi} V(\mu)\Big)^{\otimes n} \Big\}.	\]
Notice that if $\nu \in \Pi$ then $\nu - \lambda \in \Omega(\Pi)$. Notice also that, if $\Pi_1 \subset \Pi_2$ are simple subsets with the same maximal element, then $\Omega(\Pi_1) \subset \Omega(\Pi_2)$. If $\Pi = \{\mu_1, \ldots, \mu_m\}$, for simplicity sometimes we will denote $\Omega(\Pi)$ also by $\Omega(\mu_1, \ldots, \mu_m)$.

\begin{oss}	\label{oss I: Omega(Pi)}
If $V_\Pi \doteq \bigoplus_{\mu \in \Pi} V(\mu)$, then by Lemma \ref{lem I: coefficientimatriciali} we have $A_n(\Pi) = \bigoplus_{V(\nu)\subset V_\Pi^{\otimes n}} \End(V(\nu))$. Hence by the description of $\Bbbk[X^\circ_\Pi]$ we get the following identification
\[	\Omega(\Pi) \simeq \left\{(\theta, \theta^*) \, : \, \theta \in \Omega(\Pi)\right\} = \left\{B\times B^-\text{-weights in } \Bbbk[X^\circ_\Pi] \right\},	\]
and in particular $\Omega(\Pi)$ is a semigroup of $\mathbb Z \Delta$ respect to the addition.
\end{oss}

Since $BB^-\subset G$ is an open subset, it follows that $G$ is a spherical $G\times G$ variety and every non-zero $B\times B^-$-semiinvariant function $\phi \in \Bbbk(G)$ is uniquely determined by its weight up to a scalar factor. Following the general theory of spherical varieties (see for instance \cite{Kn}), the semigroup $\Omega(\Pi)$ encodes a lot of information on the geometry of $X_\Pi$. In particular, we may characterize the existence of an equivariant morphism $X_\Pi \to X_{\Pi'}$ in terms of the semigroups $\Omega(\Pi)$ and $\Omega(\Pi')$ and of the isotypic decomposition of the tensor powers of $\bigoplus_{\mu \in \Pi} V(\mu)$.

\begin{prop} \label{prop I: criterio tensoriale}
Let $\Pi, \Pi'$ be simple subsets with resp. maximal elements $\lambda,\lambda'$ and suppose that $\Supp(\lambda) = \Supp(\lambda')$. There exists a $G \times G$-equivariant morphism $X_\Pi \to X_{\Pi'}$ if and only if, for every $\mu' \in \Pi'$, they exist $\mu_1, \ldots, \mu_n \in \Pi$ such that $V(\mu' - \lambda' + n \lambda ) \subset V(\mu_1) \otimes \ldots \otimes V(\mu_n)$.
\end{prop}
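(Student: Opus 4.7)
The plan is to convert the existence of an equivariant morphism $X_\Pi \to X_{\Pi'}$ into an inclusion of the coordinate rings of the big cells inside $\Bbbk(G_{\mathrm{ad}})$, and then read that inclusion off via Remark \ref{oss I: Omega(Pi)}. Both $X_\Pi$ and $X_{\Pi'}$ are simple adjoint compactifications of $G_{\mathrm{ad}}$ with affine $B\times B^-$-stable big cells $X_\Pi^\circ$ and $X_{\Pi'}^\circ$, characterized as the non-vanishing loci of $\phi_\lambda$ and $\phi_{\lambda'}$; in each case the big cell meets every $G\times G$-orbit and contains the unique $B\times B^-$-fixed point of the closed orbit. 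A $G\times G$-equivariant morphism $f\colon X_\Pi \to X_{\Pi'}$ must restrict to the identity on the open orbit $G_{\mathrm{ad}}$ (since $Z(G_{\mathrm{ad}})=1$ forces the unique $G\times G$-equivariant self-map of $G_{\mathrm{ad}}$ to be the identity) and send closed orbit to closed orbit, hence $B\times B^-$-fixed point to $B\times B^-$-fixed point; this forces $f(X_\Pi^\circ)\subset X_{\Pi'}^\circ$ and produces an inclusion $\Bbbk[X_{\Pi'}^\circ] \subset \Bbbk[X_\Pi^\circ]$ inside $\Bbbk(G_{\mathrm{ad}})$. Conversely, from such an inclusion I would build a morphism of big cells as the $\mathrm{Spec}$ of the inclusion and extend it to $X_\Pi$ by $G\times G$-equivariance, checking well-definedness on overlaps from the fact that the extension agrees with the identity on the dense open orbit.

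With the problem reduced to algebras, the hypothesis $\Supp(\lambda)=\Supp(\lambda')$ combined with Proposition \ref{prop I: morfismi X_lambda} yields $X_\lambda\simeq X_{\lambda'}$; uniqueness of this equivariant isomorphism identifies $X_\lambda^\circ$ with $X_{\lambda'}^\circ$, so $\Bbbk[X_\lambda^\circ]=\Bbbk[X_{\lambda'}^\circ]$ as subrings of $\Bbbk(G_{\mathrm{ad}})$, and both are automatically contained in $\Bbbk[X_\Pi^\circ]$ via the canonical morphism $X_\Pi\to X_\lambda$. Applying Lemma \ref{lem I: generatori X-lambda-mu} to $\Pi'$, the inclusion $\Bbbk[X_{\Pi'}^\circ] \subset \Bbbk[X_\Pi^\circ]$ reduces to the requirement that $\phi_{\mu'}/\phi_{\lambda'} \in \Bbbk[X_\Pi^\circ]$ for every $\mu'\in \Pi'$.

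To finish, each quotient $\phi_{\mu'}/\phi_{\lambda'}$ is a non-zero $B\times B^-$-semiinvariant rational function on $G_{\mathrm{ad}}$ of diagonal weight $\mu'-\lambda'\in \mathbb{Z}\Delta$, and sphericity of $G_{\mathrm{ad}}$ guarantees that such semiinvariants are determined up to scalar by their weight. Remark \ref{oss I: Omega(Pi)} identifies the set of semiinvariant weights that occur in $\Bbbk[X_\Pi^\circ]$ with the semigroup $\Omega(\Pi)$, so membership in $\Bbbk[X_\Pi^\circ]$ becomes the purely combinatorial condition $\mu'-\lambda' \in \Omega(\Pi)$. By the very definition of $\Omega(\Pi)$, this in turn amounts to the existence of $n \geq 1$ and of $\mu_1,\ldots,\mu_n \in \Pi$ such that $V(\mu'-\lambda'+n\lambda)\subset V(\mu_1)\otimes\cdots\otimes V(\mu_n)$, which is precisely the desired condition.

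I expect the real work to lie in the very first reduction, particularly its converse direction: passing from a ring inclusion $\Bbbk[X_{\Pi'}^\circ]\subset \Bbbk[X_\Pi^\circ]$ to an honest $G\times G$-equivariant morphism $X_\Pi\to X_{\Pi'}$ of projective varieties, when $X_\Pi$ need not be normal. I cannot appeal directly to the Luna--Vust machinery; instead I plan to define the morphism first on the big cell, then propagate by $G\times G$-translation, and verify that the various translates glue because they all restrict to the identity on the dense open orbit $G_{\mathrm{ad}}$. Once this geometric point is pinned down, the remaining steps are formal consequences of Lemma \ref{lem I: generatori X-lambda-mu}, Remark \ref{oss I: Omega(Pi)}, and the definition of $\Omega(\Pi)$.
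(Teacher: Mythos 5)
Your proposal is correct and follows essentially the same route as the paper: reduce to the inclusion $\Bbbk[X^\circ_{\Pi'}]\subset\Bbbk[X^\circ_\Pi]$ using that the big cells meet every orbit, then apply Lemma \ref{lem I: generatori X-lambda-mu} and Proposition \ref{prop I: morfismi X_lambda} to reduce to $\phi_{\mu'}/\phi_{\lambda'}\in\Bbbk[X^\circ_\Pi]$, and finally translate this into the tensor condition. The only (immaterial) difference is that you phrase the last step via the weight semigroup of Remark \ref{oss I: Omega(Pi)} rather than directly via Lemma \ref{lem I: coefficientimatriciali}, which amounts to the same thing.
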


\begin{proof}
Identify the open orbits $(G\times G)[\mathrm{Id}_\Pi] \subset X_\Pi$ and $(G\times G)[\mathrm{Id}_{\Pi'}] \subset X_{\Pi'}$ with the same quotient of $G_\mathrm{ad}$, say $G_1$. Since $\Supp(\lambda) = \Supp(\lambda')$, it follows that $G_1 \cap X^\circ_\Pi$ and $G_1 \cap X^\circ_{\Pi'}$ are both identified with the same open subset $G_1^\circ \subset G_1$. On the other hand, since $X^\circ_\Pi \subset X_\Pi$ and $X^\circ_{\Pi'} \subset X_{\Pi'}$ intersect the respective closed orbits, they intersect every orbit, hence $X_\Pi = (G\times G) X^\circ_\Pi$ and $X_{\Pi'} = (G\times G) X^\circ_{\Pi'}$. Therefore the identity on $G_1$ extends to an (equivariant) morphism $X_\Pi \longrightarrow X_{\Pi'}$ if and only if the identity on $G_1^\circ$ extends to a morphism $X^\circ_\Pi \longrightarrow X^\circ_{\Pi'}$ if and only if $\Bbbk[X^\circ_{\Pi'}] \subset \Bbbk[X^\circ_\Pi]$.

By Lemma \ref{lem I: generatori X-lambda-mu}, the coordinate ring $\Bbbk[X^\circ_\Pi]$ is generated as $\mathfrak{g} \oplus \mathfrak{g}$-algebra by $\Bbbk[X^\circ_\lambda]$ together with the set $\{\phi_\mu/\phi_\lambda\}_{\mu \in \Pi}$, while $\Bbbk[X^\circ_{\Pi'}]$ is generated by $\Bbbk[X^\circ_{\lambda'}]$ together with the set $\{\phi_{\mu'}/\phi_{\lambda'}\}_{\mu' \in \Pi'}$. On the other hand, by Proposition \ref{prop I: morfismi X_lambda} it follows that $\Bbbk[X^\circ_{\lambda'}] \simeq \Bbbk[X^\circ_{\lambda}]$, therefore there exists an equivariant morphism $X_\Pi \to X_{\Pi'}$ if and only if $\phi_{\mu'}/\phi_{\lambda'} \in \Bbbk[X_{\Pi}^\circ]$ for every $\mu' \in \Pi'$. By the description of $\Bbbk[X_{\Pi}^\circ]$ in terms of the projective coordinates of $X_\Pi$, it follows that $\phi_{\mu'}/\phi_{\lambda'} \in \Bbbk[X_{\Pi}^\circ]$ if and only if there exists $n\in \mathbb{N}$ and $\phi \in A_n(\Pi) = \Big(\bigoplus_{\mu \in \Pi} \End(V(\mu))\Big)^n$ such that $\phi_{\mu'} / \phi_{\lambda'} = \phi/\phi_\lambda^n$. On the other hand, $\phi$ has to be $B\times B^-$-semiinvariant, hence the claim follows by Lemma \ref{lem I: coefficientimatriciali}.
\end{proof}

In terms of the semigroup $\Omega(\Pi)$, we may reformulate previous proposition as follows.

\begin{cor} \label{cor I: criterio semigruppi}
Let $\Pi, \Pi'$ be simple subsets with resp. maximal elements $\lambda, \lambda'$ and assume that $\Supp(\lambda) = \Supp(\lambda')$. There exists a $G\times G$-equivariant morphism $X_\Pi \to X_{\Pi'}$ if and only if $\Omega(\Pi') \subset \Omega(\Pi)$ if and only if $\mu' - \lambda' \in \Omega(\Pi)$ for all $\mu' \in \Pi'$.
\end{cor}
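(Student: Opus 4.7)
The corollary amounts to two reformulations of Proposition \ref{prop I: criterio tensoriale}, so I would prove the two equivalences in turn, chaining them through the condition ``$\mu'-\lambda' \in \Omega(\Pi)$ for every $\mu' \in \Pi'$''.

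For the second equivalence (morphism exists $\Leftrightarrow$ $\mu'-\lambda' \in \Omega(\Pi)$ for all $\mu' \in \Pi'$) the argument is pure bookkeeping: unravel the definition of $\Omega(\Pi)$. Indeed, by that definition, $\mu'-\lambda' \in \Omega(\Pi)$ means exactly that there exist $n \in \mathbb{N}$ and $\mu_1,\ldots,\mu_n \in \Pi$ with $V(\mu'-\lambda'+n\lambda) \subset V(\mu_1)\otimes\cdots\otimes V(\mu_n)$ (noting that because $\Supp(\lambda)=\Supp(\lambda')$ the weight $\mu'-\lambda'+n\lambda$ is dominant for $n$ large). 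This is precisely the condition in Proposition \ref{prop I: criterio tensoriale}.

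For the first equivalence, one direction is immediate: if $\Omega(\Pi') \subset \Omega(\Pi)$, then for every $\mu' \in \Pi'$, taking $n=1$ and $\nu = \mu'$ in the definition of $\Omega(\Pi')$ shows $\mu'-\lambda' \in \Omega(\Pi') \subset \Omega(\Pi)$, which by the second equivalence already gives the morphism. Conversely, assume the morphism $X_\Pi \to X_{\Pi'}$ exists. As in the proof of Proposition \ref{prop I: criterio tensoriale}, this morphism restricts to a morphism of $B\times B^-$-stable affine open subsets $X^\circ_\Pi \to X^\circ_{\Pi'}$, which corresponds to an inclusion of coordinate rings $\Bbbk[X^\circ_{\Pi'}] \hookrightarrow \Bbbk[X^\circ_\Pi]$. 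Since $G$ is a spherical $G\times G$-variety, each $B\times B^-$-weight space in $\Bbbk(G)$ (and thus in $\Bbbk[X^\circ_\Pi]$ and $\Bbbk[X^\circ_{\Pi'}]$) is at most one-dimensional, so the inclusion of rings is equivalent to the inclusion of the corresponding sets of $B\times B^-$-weights. By Remark \ref{oss I: Omega(Pi)} these weight sets are identified with $\Omega(\Pi)$ and $\Omega(\Pi')$, giving $\Omega(\Pi') \subset \Omega(\Pi)$.

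There is essentially no technical obstacle here: the whole statement is a translation of Proposition \ref{prop I: criterio tensoriale} into the semigroup language provided by Remark \ref{oss I: Omega(Pi)}, and the only non-formal input is sphericality, which ensures that an inclusion of subalgebras of $\Bbbk(G)$ is detected by the set of $B\times B^-$-weights it contains. The two intermediate characterizations (morphism $\Leftrightarrow$ $\mu'-\lambda' \in \Omega(\Pi)$ for all $\mu'$, and the latter $\Leftrightarrow$ $\Omega(\Pi') \subset \Omega(\Pi)$) together close the loop.
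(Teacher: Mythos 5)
Your proposal is correct and follows essentially the same route as the paper: the paper likewise closes the cycle by restricting an existing morphism to the $B\times B^-$-stable affine charts and reading off the weight semigroups via Remark \ref{oss I: Omega(Pi)}, observing that $\mu'-\lambda'\in\Omega(\Pi')$ for each $\mu'\in\Pi'$, and then invoking Proposition \ref{prop I: criterio tensoriale} (whose condition is, as you note, the definitional unravelling of $\mu'-\lambda'\in\Omega(\Pi)$) to recover the morphism. The only cosmetic difference is that you prove two biconditionals where the paper chains three one-way implications; the mathematical content is identical.
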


\begin{proof}
If $X_\Pi \longrightarrow X_{\Pi'}$, then in particular we have $X_\Pi^\circ \longrightarrow X_{\Pi'}^\circ$: hence by Remark \ref{oss I: Omega(Pi)} it follows that $\Omega(\Pi') \subset \Omega(\Pi)$ and we get $\mu' - \lambda' \in \Omega(\Pi)$ for all $\mu' \in \Pi'$. Suppose conversely that $\mu' - \lambda' \in \Omega(\Pi)$ for all $\mu' \in \Pi'$: then $X_\Pi$ dominates $X_{\Pi'}$ by Proposition \ref{prop I: criterio tensoriale}.
\end{proof}

\begin{dfn}
\begin{itemize}
	\item[i)] Suppose that $\Pi, \Pi' \subset \Lambda^+$ are simple with resp. maximal elements $\lambda, \lambda'$. Then $\Pi$ and $\Pi'$ are \textit{equivalent} (and we write $\Pi \sim \Pi'$) if $\Supp(\lambda') = \Supp(\lambda)$ and $\Pi' - \lambda' = \Pi - \lambda$.
	\item[ii)] A weight $\mu \in \Pi^+(\lambda)$ is \textit{trivial} if $X_{\lambda, \mu} \simeq X_\lambda$ as $G\times G$-varieties. We will denote the subset of the trivial weights in $\Pi^+(\lambda)$ by $\Pi^+_\mathrm{tr}(\lambda)$.
\end{itemize}
\end{dfn}

\begin{oss}	\label{oss I: caratterizzazione-pesi-banali}
\begin{itemize}
	\item[i)] By Proposition \ref{prop I: criterio tensoriale}, a weight $\mu \in \Pi^+(\lambda)$ is trivial if and only if there exists $n\in \mathbb{N}$ such that $V(\mu+(n-1)\lambda) \subset V(\lambda)^{\otimes n}$. Equivalently, $\mu \in \Pi^+(\lambda)$ is trivial if and only if $\Omega(\lambda,\mu) = \Omega(\lambda)$, if and only if $\mu - \lambda \in \Omega(\lambda)$.
	\item[ii)] We may describe the semigroup $\Omega(\lambda)$ in terms of trivial weights as follows:
\[	\Omega(\lambda) = \{\mu - n\lambda \, : \, \mu \in \Pi^+_\mathrm{tr}(n\lambda)\}.	\]
Suppose indeed that $\mu \in \Pi^+_\mathrm{tr}(n\lambda)$: then by Proposition \ref{prop I: morfismi X_lambda} we have $X_{n\lambda} \simeq X_\lambda$, and it follows $\mu - n\lambda \in \Omega(n\lambda) = \Omega(\lambda)$. Conversely, if $\theta \in \Omega(\lambda)$, then $V(n\lambda + \theta) \subset V(\lambda)^{\otimes n}$, hence $n\lambda + \theta \in \Pi^+_\mathrm{tr}(n\lambda)$.
	\item[iii)] Consider the semigroup
\[	\widetilde\Omega(\lambda) = \{\mu - n\lambda \, : \, \mu \in \Pi^+(n\lambda)\}.	\]
Since $X_{n\lambda} \simeq X_\lambda$ for all $n > 0$, considering the normalizations $\widetilde{X}_{n\lambda} \simeq \widetilde{X}_\lambda$ it follows also $X_{\Pi^+(n\lambda)} \simeq X_{\Pi^+(\lambda)}$, hence $\widetilde\Omega(\lambda) = \Omega(\Pi^+(\lambda))$. On the other hand, for every $\mu \in \Pi^+(\lambda)$ there exists $n \in \mathbb{N}$ such that $V(n\mu) \subset V(\lambda)^{\otimes n}$ (see \cite[Lemma 4.9]{AB} or \cite[Lemma 1]{Ti}), therefore $\widetilde\Omega(\lambda) = \Omega(\lambda)_\mathbb{Q} \cap \mathbb{Z}\Delta$ is the \textit{saturation} of $\Omega(\lambda)$ in $\mathbb{Z}\Delta$ (where $\Omega(\lambda)_\mathbb{Q}$ denotes the cone generated by $\Omega(\lambda)$ in $\mathbb{Q}\Delta$). We say that $\Omega(\Pi)$ is \textit{saturated} in $\mathbb Z\Delta$ if $\Omega(\Pi) = \widetilde\Omega(\lambda)$.
	\item[iv)] Suppose that $\Pi$ is simple with maximal element $\lambda$ and let $\pi \in \widetilde\Omega(\lambda)$, then by iii) it exists $n \in \mathbb{N}$ such that $n\pi \in \Omega(\Pi)$. Since $B\times B^-$-semiinvariant functions in $\Bbbk(G)$ are uniquely determined by their weights up to scalar multiples, if $f_\pi, f_{n\pi} \in \Bbbk(G_\mathrm{ad})^{(B\times B^-)}$ are $B\times B^-$-semiinvariant functions of weights $(\pi,\pi^*)$ and $(n\pi,n\pi^*)$, then by Remark \ref{oss I: Omega(Pi)} we have $f_\pi \in \Bbbk[\widetilde{X}^\circ_\lambda]^{(B \times B^-)}$ and $f_{n\pi} \in \Bbbk[X^\circ_\Pi]^{(B \times B^-)}$. Since they have the same weight, $f_{n \pi}$ and $f_\pi^n$ are proportional, hence the normality of $X_\Pi$ implies that $f_\pi \in \Bbbk[X^\circ_\Pi]$, i.e. $\pi \in \Omega(\Pi)$. Together with iii), it follows that the normality of $X_\Pi$ is equivalent to the saturation of $\Omega(\Pi)$ in $\mathbb Z\Delta$.
\end{itemize}
\end{oss}

\begin{cor} \label{cor I: etichette}
Let $\Pi, \Pi' \subset \Lambda^+$ be simple subsets with resp. maximal elements $\lambda$ and $\lambda'$.
\begin{itemize}
	\item[i)] If $\Pi \sim \Pi'$, then $X_\Pi \simeq X_{\Pi'}$ as $G \times G$-varieties. In particular $\Omega(\Pi) = \Omega(\Pi')$.
	\item[ii)] Assume that $\Pi, \Pi'$ both have cardinality 2 and that $X_\Pi \not \simeq X_\lambda$. If $X_{\Pi'} \simeq X_\Pi$ as $G\times G$-varieties, then $\Pi' \sim \Pi$.
\end{itemize}
\end{cor}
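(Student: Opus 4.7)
The plan is to deduce both parts from Corollary~\ref{cor I: criterio semigruppi}.

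For (i), if $\Pi \sim \Pi'$ then every $\mu' \in \Pi'$ can be written as $\mu' - \lambda' = \mu - \lambda$ for some $\mu \in \Pi$, whence $\mu' - \lambda' \in \Omega(\Pi)$ trivially (since $V(\mu) \subset V_\Pi$); Corollary~\ref{cor I: criterio semigruppi} then produces an equivariant morphism $X_\Pi \to X_{\Pi'}$, and symmetrically one in the opposite direction. Both compositions restrict to the identity on the open $G \times G$-orbit and so must be the identity, hence the two morphisms are mutually inverse isomorphisms. The equality $\Omega(\Pi) = \Omega(\Pi')$ then follows from Remark~\ref{oss I: Omega(Pi)}: $\Omega(\Pi)$ is the set of $B \times B^-$-weights of $\Bbbk[X_\Pi^\circ]$ and hence is intrinsic to $X_\Pi$.

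For (ii), I would first show $\Supp(\lambda) = \Supp(\lambda')$. The isomorphism sends closed orbit to closed orbit, and the unique closed $G \times G$-orbit of a simple $X_\Pi$ with maximal element $\lambda$ is of the form $G/P_\lambda \times G/P_\lambda^-$, where $P_\lambda$ is the standard parabolic determined by $\Supp(\lambda)$; since conjugate standard parabolics coincide, this yields the claim. Part (i) applied to $\{\lambda\}$ and $\{\lambda'\}$ then gives $\Omega(\lambda) = \Omega(\lambda')$, and Corollary~\ref{cor I: criterio semigruppi} applied in both directions to the isomorphism yields $\Omega(\Pi) = \Omega(\Pi')$; in particular $\mu' - \lambda' \in \Omega(\Pi)$ and $\mu - \lambda \in \Omega(\Pi')$.

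The main obstacle is to upgrade these memberships to the equality $\mu - \lambda = \mu' - \lambda'$; here I would use the cardinality $2$ hypothesis. Set $\alpha = \lambda - \mu$ and $\alpha' = \lambda' - \mu'$, both nonzero in $\mathbb{N}\Delta$. By Lemma~\ref{lem I: coefficientimatriciali}, the membership $\mu' - \lambda' \in \Omega(\Pi)$ means $V(n\lambda - \alpha') \subset V(\lambda)^{\otimes n-k} \otimes V(\mu)^{\otimes k}$ for some $n \geq 1$ and $0 \leq k \leq n$. The case $k = 0$ would force $\mu'$ to be trivial for $\lambda'$, contradicting $X_{\Pi'} \simeq X_\Pi \not\simeq X_\lambda$; so $k \geq 1$, and comparing with the highest weight $n\lambda - k\alpha$ of the tensor product yields $k\alpha \leq \alpha'$ in the dominance order. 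Symmetrically $k'\alpha' \leq \alpha$ for some $k' \geq 1$, and hence $\alpha \geq k'\alpha' \geq kk'\alpha$; since $\alpha \neq 0$ and $kk' \geq 1$ this forces $kk' = 1$, so $k = k' = 1$ and $\alpha = \alpha'$. Combined with the equality of supports, this is precisely $\Pi \sim \Pi'$.
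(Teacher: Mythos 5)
Your proposal is correct and follows essentially the same route as the paper: part (i) rests on Corollary \ref{cor I: criterio semigruppi} (equivalently, the comparison of the rings $\Bbbk[X^\circ_\Pi]$ and $\Bbbk[X^\circ_{\Pi'}]$ underlying it), and part (ii) identifies the supports via the closed orbits (Proposition \ref{prop I: timashev} ii)), rules out $k=0$ using $X_{\Pi'}\not\simeq X_\lambda$, and compares highest weights in $V(\lambda)^{\otimes n-k}\otimes V(\mu)^{\otimes k}$ exactly as the paper does. Your final step via $kk'=1$ is a harmless variant of the paper's observation that $k(\mu-\lambda)\leqslant \mu-\lambda$ for $k\geqslant 1$.
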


\begin{proof}
i) If $\mu\in \Pi$, let $\mu' \in \Pi'$ be such that $\mu-\lambda = \mu' - \lambda'$. Since $B \times B^-$ eigenfunctions in $\Bbbk(G)$ are uniquely determined by their weight up to scalar factors, we have that $\phi_\mu/\phi_\lambda$ and $\phi_{\mu'}/\phi_{\lambda'}$ are proportional. If $\Pi \sim \Pi'$ it follows then by Lemma \ref{lem I: generatori X-lambda-mu} that $\Bbbk[X^\circ_\Pi] \simeq \Bbbk[X^\circ_{\Pi'}]$, and reasoning as in Proposition \ref{prop I: criterio tensoriale} we get an isomorphism of $G\times G$-varieties $X_\Pi \simeq X_{\Pi'}$. The last claim follows by Corollary \ref{cor I: criterio semigruppi}.

ii) Denote $\Pi = \{\lambda, \mu\}$ and $\Pi' = \{\lambda', \mu'\}$ and suppose that $X_{\Pi'} \simeq X_\Pi$. By Proposition \ref{prop I: timashev} ii) it follows that $\Supp(\lambda) = \Supp(\lambda')$, since otherwise $X_\Pi$ and $X_{\Pi'}$ would have non-isomorphic closed orbits. By Proposition \ref{prop I: criterio tensoriale} it follows that $V(\mu'-\lambda' +n\lambda) \subset V(\mu)^{\otimes k} \otimes V(\lambda)^{\otimes n-k}$ for some $n \in \mathbb{N}$ and $k \leqslant n$, so comparing highest weights on the right and on the left we get that $\mu' - \lambda' \leqslant k(\mu-\lambda)$. Since $X_{\Pi'} \not \simeq X_\lambda$, by Proposition \ref{prop I: criterio tensoriale} it must be $k >0$, hence $\mu' - \lambda' \leqslant \mu-\lambda$. An analogous argument shows that $\mu - \lambda \leqslant \mu' - \lambda'$, and the claim follows.
\end{proof}

\begin{oss}	\label{oss I: etichette}
Together with Remark \ref{oss I: Omega(Pi)}, previous corollary shows that the set
\[	\widetilde\Omega(\lambda) \smallsetminus \Omega(\lambda) = \{ \mu - n\lambda \, : \, \mu \in \Pi^+(n\lambda) \smallsetminus \Pi_{\mathrm{tr}}^+(n\lambda) \}	\]
classifies the simple linear compactifications $X_\Pi$ such that $\widetilde X_\lambda \to X_\Pi \to X_\lambda$ and $\mathrm{card}(\Pi) = 2$ up to equivariant isomorphism. If indeed $X_\Pi$ is such a compactification and if $X_\Pi \not \simeq X_\lambda$, then it must be $\Pi = \{\lambda', \mu'\}$ for some $\lambda' \in \Lambda^+$ with $\Supp(\lambda') = \Supp(\lambda)$ and some $\mu' \in \Pi^+(\lambda') \smallsetminus \Pi^+_{\mathrm{tr}}(\lambda')$, and by Corollary \ref{cor I: etichette} $X_\Pi$ is uniquely determined by the difference $\mu' - \lambda'$. On the other hand up to consider an equivalent simple subset we may assume that  $\lambda' = n\lambda$ for some $n \in \mathbb{N}$, therefore $\mu' - \lambda' \in \widetilde{\Omega}(\lambda) \smallsetminus \Omega(\lambda)$.
\end{oss}

Suppose that $\Pi\subset \Lambda^+$ is simple with maximal element $\lambda$. Then, by the isomorphism $\widetilde{X}_\lambda \simeq X_{\Pi^+(\lambda)}$, Proposition \ref{prop I: criterio tensoriale} yields as well a tensorial criterion of normality for $X_\Pi$: $X_\Pi$ is normal if and only if, for every $\nu \in \Pi^+(\lambda)$, they exist $\mu_1, \ldots, \mu_n \in \Pi$ such that $V(\nu+(n-1)\lambda) \subset V(\mu_1) \otimes \ldots \otimes V(\mu_n)$. As shown in \cite{BGMR}, this characterization turns out to be equivalent to a combinatorial property of $\Pi$.

\begin{dfn}[{\cite[Def. 2.7]{BGMR}}]	\label{def: twin}
If $\Delta' \subset \Delta$ is a non-simply laced connected component, order the simple roots in $\Delta'= \{ \alpha_1, \ldots, \alpha_r\}$ starting from the extreme of the Dynkin diagram of $\Delta'$ which contains a long root and denote $\alpha_q$ the first short root in $\Delta'$. If $\lambda \in \Lambda^+$ is such that $\alpha_q\not \in \Supp(\lambda)$ and such that $\Supp(\lambda)\cap \Delta'$ contains a long root, denote $\alpha_p$ the last long root which occurs in $\Supp(\lambda)\cap \Delta'$; for instance, if $\Delta'$ is not of type $\mathsf{G}_2$, then the numbering is as follows:
\[	\begin{picture}(9000,1800)(2000,-900)
           \put(0,0){\multiput(0,0)(3600,0){2}{\circle*{150}}\thicklines\multiput(0,0)(2500,0){2}{\line(1,0){1100}}\multiput(1300,0)(400,0){3}{\line(1,0){200}}}
           \put(3600,0){\multiput(0,0)(3600,0){2}{\circle*{150}}\thicklines\multiput(0,0)(2500,0){2}{\line(1,0){1100}}\multiput(1300,0)(400,0){3}{\line(1,0){200}}}
           \put(7200,0){\multiput(0,0)(1800,0){2}{\circle*{150}}\thicklines\multiput(0,-60)(0,150){2}{\line(1,0){1800}}\multiput(1050,0)(-25,25){10}{\circle*{50}}\multiput(1050,0)(-25,-25){10}{\circle*{50}}}
           \put(9000,0){\multiput(0,0)(3600,0){2}{\circle*{150}}\thicklines\multiput(0,0)(2500,0){2}{\line(1,0){1100}}\multiput(1300,0)(400,0){3}{\line(1,0){200}}}
           \put(-150,-700){\tiny $\alpha_1$}
           \put(3450,-700){\tiny $\alpha_p$}
           \put(8850,-700){\tiny $\alpha_q$}
           \put(12450,-700){\tiny $\alpha_r$}
\end{picture}	\]
The \textit{little brother} of $\lambda$ with respect to $\Delta'$ is the dominant weight
\[
\lambda_{\Delta'}^\mathrm{lb} = \lambda - \sum_{i=p}^q \alpha_i =
\left\{ \begin{array}{ll}
		\lambda-\omega_1+\omega_2 & \textrm{ if $G$ is of type $\sf{G}_2$} \\
		\lambda + \omega_{p-1} - \omega_{p}  + \omega_{q+1} & \textrm{ otherwise}
\end{array} \right.
\]
where $\omega_i$ is the fundamental weight associated to $\alpha_i$ if $1\leqslant i \leqslant r$, while we set $\omega_0 = \omega_{r+1} = 0$. We will denote the set of the little brothers of $\lambda$ by $\mathrm{LB}(\lambda)$. Notice that $\mathrm{LB}(\lambda)$ is empty if and only if $\lambda$ satisfies the following condition:
\[	\begin{array}{ll}
	(\star)  &	\begin{array}{l}	\text{For every non-simply laced connected component $\Delta'\subset \Delta$, if $\Supp(\lambda) \cap \Delta'$ contains a}\\	\text{long root, then it contains also the short root which is adjacent to a long simple root.}	\end{array}	\end{array}	\]
In case $\Delta$ is connected and $\lambda$ does not satisfy $(\star)$, then we set $\lambda^\mathrm{lb} = \lambda_\Delta^\mathrm{lb}$.
\end{dfn}

\begin{teo} [{\cite[Thm.~2.10]{BGMR}}] 	\label{teo I: BGMR}
Let $G$ be a semisimple group and let $\Pi\subset \Lambda^+$ be simple with maximal element $\lambda$. Then the variety $X_\Pi$ is normal if and only if $\mathrm{LB}(\lambda)\subset \Pi$. In particular, $X_\lambda$ is normal if and only if $\lambda$ satisfies $(\star)$.
\end{teo}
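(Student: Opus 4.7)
The strategy is to translate the normality of $X_\Pi$ into a saturation property of the semigroup $\Omega(\Pi)$, exploiting Remark \ref{oss I: caratterizzazione-pesi-banali}(iv): $X_\Pi$ is normal if and only if $\Omega(\Pi) = \widetilde{\Omega}(\lambda)$. Combining this with Corollary \ref{cor I: criterio semigruppi} and the tensorial criterion of Proposition \ref{prop I: criterio tensoriale}, normality is equivalent to the statement that for every $\nu \in \Pi^+(\lambda)$ there exist $n \in \mathbb{N}$ and $\mu_1, \ldots, \mu_n \in \Pi$ with $V(\nu + (n-1)\lambda) \subset V(\mu_1) \otimes \cdots \otimes V(\mu_n)$. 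The theorem therefore splits naturally into a sufficiency statement (when $\mathrm{LB}(\lambda) \subset \Pi$, every such $\nu$ can be produced) and a necessity statement (if some little brother is missing, the corresponding $\nu$ is an obstruction).

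For the sufficiency direction, I would argue by induction on the height of $\lambda - \nu$ in $\mathbb{N}\Delta$. The inductive step relies on Lemma \ref{lem I: coefficientimatriciali} together with a PRV-type production of highest weight vectors: for a generic $\nu$ a direct construction using $\mathfrak{sl}_2$-triples attached to the simple roots in $\Supp(\lambda)$ exhibits $V(\nu + (n-1)\lambda)$ as a submodule of $V(\lambda)^{\otimes n}$ for a sufficiently large $n$. The construction fails precisely when $\lambda - \nu$ has the little-brother shape $\sum_{i=p}^q \alpha_i$ relative to a non-simply laced component $\Delta'$, in which case the obstruction is resolved by the hypothesis $\lambda^\mathrm{lb}_{\Delta'} \in \Pi$: the module $V(\lambda^\mathrm{lb}_{\Delta'}) \otimes V(\lambda)^{\otimes n-1}$ contains a highest weight vector of weight $\lambda^\mathrm{lb}_{\Delta'} + (n-1)\lambda$ as its Cartan component. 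One then bootstraps by expressing an arbitrary $\nu \in \Pi^+(\lambda)$ as a combination of $\lambda$-shifts of generic weights and little-brother weights.

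For the necessity direction, assume $\lambda^\mathrm{lb}_{\Delta'} \notin \Pi$ for some non-simply laced component $\Delta'$; I must show that $\lambda^\mathrm{lb}_{\Delta'} - \lambda \in \widetilde{\Omega}(\lambda) \smallsetminus \Omega(\Pi)$. Membership in $\widetilde{\Omega}(\lambda)$ is immediate as $\lambda^\mathrm{lb}_{\Delta'} \in \Pi^+(\lambda)$. To exclude $\lambda^\mathrm{lb}_{\Delta'} - \lambda$ from $\Omega(\Pi)$, by Lemma \ref{lem I: coefficientimatriciali} one must check that no tensor product $V(\mu_1) \otimes \cdots \otimes V(\mu_n)$ with $\mu_i \in \Pi$ contains $V(\lambda^\mathrm{lb}_{\Delta'} + (n-1)\lambda)$. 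The combinatorial obstruction is that the defect $\lambda - \lambda^\mathrm{lb}_{\Delta'} = \sum_{i=p}^q \alpha_i$ involves the short root $\alpha_q$ adjacent to $\alpha_p$ with coefficient exactly one, a profile that a comparison of coefficients on short roots of $\Delta'$ shows cannot be attained from tensor products of modules whose highest weights also satisfy $(\star)$ relative to $\Delta'$, nor from little brothers associated with other components.

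The main obstacle is the second half of the proof: ruling out all possible tensor factorisations producing $V(\lambda^\mathrm{lb}_{\Delta'} + (n-1)\lambda)$ requires a refined weight-by-weight analysis restricted to the subroot system $\Delta'$, exploiting the assumption $\alpha_q \notin \Supp(\lambda)$ and the minimality of the short-root shift $\sum_{i=p}^q \alpha_i$. In contrast to the simply laced case, where no little brothers occur and a single PRV-type argument settles everything, here one needs to track the interaction between long and short roots case-by-case across the possible Dynkin types $\sf B$, $\sf C$, $\sf F_4$, and $\sf G_2$.
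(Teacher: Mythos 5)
This statement is quoted from \cite[Thm.~2.10]{BGMR}; the paper does not prove it, so there is no internal proof to compare against. Your reduction of normality to the tensorial criterion (via Remark \ref{oss I: caratterizzazione-pesi-banali}(iv), Corollary \ref{cor I: criterio semigruppi} and Proposition \ref{prop I: criterio tensoriale}) is exactly the framework the paper itself records just before Definition \ref{def: twin}, so the skeleton of your plan is sound. But what you have written is a plan, not a proof: both halves of the equivalence are asserted at precisely the points where all the work lies.

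For sufficiency, the claim that a ``PRV-type production of highest weight vectors'' realizes $V(\nu+(n-1)\lambda)\subset V(\lambda)^{\otimes n}$ for every $\nu$ except those of little-brother shape is the theorem in disguise: one must actually prove, root by root, that for each positive root $\theta$ non-orthogonal to $\lambda$ (long, or short with the right parity and support) one has $V(\lambda+\nu)\subset V(\lambda)\otimes V(\nu+\theta)$ --- this is the content of statements like Corollary \ref{cor I: differenza radice lunga} and Lemma \ref{lem I: differenza radice positiva} --- and then organize an induction showing that the only irreducible obstruction is the shift $\sum_{i=p}^{q}\alpha_i$. For necessity the gap is more serious: you must show $\lambda^{\mathrm{lb}}_{\Delta'}-\lambda\notin\Omega(\Pi)$ for an \emph{arbitrary} simple $\Pi$ not containing the little brother, i.e.\ that no product $V(\mu_1)\otimes\cdots\otimes V(\mu_n)$ with $\mu_i\in\Pi$ contains $V(\lambda^{\mathrm{lb}}_{\Delta'}+(n-1)\lambda)$. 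Since $\lambda-\lambda^{\mathrm{lb}}_{\Delta'}$ is itself a positive root non-orthogonal to $\lambda$, an argument via the order $\leqslant^\lambda$ (or via ``modules whose highest weights satisfy $(\star)$'', which the $\mu_i$ need not do) cannot exclude it; the true obstruction is a parity phenomenon on the coefficients of the short simple roots --- in type $\mathsf B$ it is exactly the statement $\lambda^{\mathrm{lb}}\notin\Pi^+_{\mathrm{tr}}(\lambda)$, which this paper establishes only through the Schur--Weyl analysis of Proposition \ref{prop II: schur-weyl1} and Corollary \ref{cor II: necessità}. Your ``comparison of coefficients on short roots'' points at the right place but supplies no argument, so neither direction is established.
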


\begin{cor} \label{cor I: pesi banali caso star}
Suppose that $\lambda \in \Lambda^+$ satisfies $(\star)$. Then $X_\Pi \simeq X_\lambda$ for every simple subset $\Pi$ with maximal element $\lambda$. In particular $\Pi^+_\mathrm{tr}(\lambda) = \Pi^+(\lambda)$.
\end{cor}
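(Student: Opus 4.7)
The plan is to combine the normality criterion of Theorem~\ref{teo I: BGMR} with the semigroup characterization of equivariant morphisms in Corollary~\ref{cor I: criterio semigruppi}. Since $\lambda$ satisfies $(\star)$, the set $\mathrm{LB}(\lambda)$ is empty, so Theorem~\ref{teo I: BGMR} applied to the simple subset $\{\lambda\}$ immediately yields that $X_\lambda$ itself is normal. By Remark~\ref{oss I: caratterizzazione-pesi-banali}~iv), this normality is equivalent to the equality of semigroups $\Omega(\lambda) = \widetilde{\Omega}(\lambda)$. This is the key reduction.

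Next I would take an arbitrary simple subset $\Pi$ with maximal element $\lambda$. The natural chain of equivariant morphisms $\widetilde{X}_\lambda \to X_\Pi \to X_\lambda$, all three varieties having the same maximal element (so that the support hypothesis of Corollary~\ref{cor I: criterio semigruppi} is automatically met), together with Corollary~\ref{cor I: criterio semigruppi} produces a sandwich
\[
\Omega(\lambda) \,\subset\, \Omega(\Pi) \,\subset\, \Omega(\Pi^+(\lambda)) \,=\, \widetilde{\Omega}(\lambda).
\]
By the previous paragraph the two extremes coincide, so $\Omega(\Pi) = \Omega(\lambda)$. Applying Corollary~\ref{cor I: criterio semigruppi} now in the reverse direction produces an equivariant morphism $X_\lambda \to X_\Pi$ as well.

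To close, I would argue that the two equivariant morphisms $X_\Pi \to X_\lambda$ and $X_\lambda \to X_\Pi$ are mutually inverse: both varieties are equivariant compactifications of the same open $G\times G$-orbit (a quotient of $G_\mathrm{ad}$), each morphism restricts to the identity there, so their compositions agree with the identity on a dense open set and hence everywhere by separatedness. This gives $X_\Pi \simeq X_\lambda$ as $G\times G$-varieties. For the ``in particular'' statement, given any $\mu \in \Pi^+(\lambda)$ the subset $\{\lambda, \mu\}$ is simple with maximal element $\lambda$ (and adjoint since $\lambda - \mu \in \mathbb{N}\Delta \subset \mathbb{Z}\Delta$), so the first assertion gives $X_{\lambda, \mu} \simeq X_\lambda$, which is the very definition of $\mu$ being trivial.

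No single step looks technically hard: everything is a formal manipulation of ingredients already at hand. The only non-obvious conceptual move is recognizing that the hypothesis $(\star)$ already forces $X_\lambda$ itself to be normal via Theorem~\ref{teo I: BGMR} (via the trivial simple subset $\{\lambda\}$), which is what collapses the semigroup sandwich and drives the rest of the argument.
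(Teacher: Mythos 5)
Your proposal is correct and follows essentially the same route as the paper: the key input in both is that $(\star)$ forces $X_\lambda$ to be normal via Theorem~\ref{teo I: BGMR}, combined with the factorization $\widetilde{X}_\lambda \to X_\Pi \to X_\lambda$ of the normalization. The paper simply concludes directly that this composite being an isomorphism forces $X_\Pi \to X_\lambda$ to be one, whereas you take a slightly longer detour through the semigroup sandwich $\Omega(\lambda) \subset \Omega(\Pi) \subset \widetilde{\Omega}(\lambda)$ and Corollary~\ref{cor I: criterio semigruppi}; both are valid.
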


\begin{proof}
Let $\Pi \subset \Lambda^+$ be simple with maximal element $\lambda$. Then the normalization of $X_\lambda$ factors through $X_\Pi$, so the claim follows by the normality of $X_\lambda$.
\end{proof}

\subsection{Some remarks on tensor product decompositions}

We conclude this section with some explicit results on tensor products that will be needed in the following.

\begin{lem}	\label{lem I: riduzione-levi}
Let $\lambda_1, \ldots, \lambda_n \in \Lambda^+$ and let $L$ be a Levi subgroup of $G$. Denote $\Lambda^+_L \subset \Lambda$ the semigroup of dominant weights for $L$ and, for $\pi \in \Lambda^+_L$, denote by $V_L(\pi)$ the simple $L$-module of highest weight $\pi$. If $\mu \in \Lambda^+_L$ is such that $V_L(\mu) \subset V_L(\lambda_1) \otimes \ldots \otimes V_L(\lambda_n)$, then $\mu \in \Lambda^+$ and $V(\mu) \subset V(\lambda_1) \otimes \ldots \otimes V(\lambda_n)$.
\end{lem}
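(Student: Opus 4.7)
The plan is to produce a $G$-highest weight vector of weight $\mu$ directly inside $V(\lambda_1)\otimes\cdots\otimes V(\lambda_n)$; once this is achieved, both conclusions ($\mu\in\Lambda^+$ and $V(\mu)\subset V(\lambda_1)\otimes\cdots\otimes V(\lambda_n)$) are automatic.

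The first step is to embed each $V_L(\lambda_i)$ into $V(\lambda_i)$ as an $L$-submodule. Since $B\cap L$ is a Borel subgroup of $L$ containing $T$, the $G$-highest weight vector $v_{\lambda_i}\in V(\lambda_i)$ is a $(B\cap L)$-eigenvector of weight $\lambda_i$; since $\lambda_i$ is $G$-dominant it is in particular $L$-dominant, so the cyclic $L$-module $L\cdot v_{\lambda_i}$ is simple and isomorphic to $V_L(\lambda_i)$. Tensoring these inclusions yields an inclusion of $L$-modules
\[	V_L(\lambda_1)\otimes\cdots\otimes V_L(\lambda_n)\subset V(\lambda_1)\otimes\cdots\otimes V(\lambda_n).	\]

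Next I would pick a highest weight vector $v$ for a copy of $V_L(\mu)$ inside the left-hand side. Because every $T$-weight of $V_L(\lambda_i)$ is of the form $\lambda_i-\theta_i$ with $\theta_i\in\mathbb{N}\Delta_L$, the weight of $v$ satisfies $\mu=\sum_i\lambda_i-\theta$ for some $\theta\in\mathbb{N}\Delta_L$. The crucial claim is that $v$ is actually a $G$-highest weight vector of the ambient tensor product. For $\alpha\in\Delta_L$ the equality $e_\alpha(v)=0$ holds by hypothesis; for $\alpha\in\Delta\smallsetminus\Delta_L$ I would argue by contradiction. If $e_\alpha(v)$ were non-zero it would have $T$-weight $\mu+\alpha$, which must then be a $T$-weight of $V(\lambda_1)\otimes\cdots\otimes V(\lambda_n)$, forcing $\theta-\alpha\in\mathbb{N}\Delta$; but the coefficient of $\alpha$ in $\theta-\alpha$ is $-1$, contradicting the linear independence of $\Delta$.

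It follows that $v$ is a $G$-highest weight vector in $V(\lambda_1)\otimes\cdots\otimes V(\lambda_n)$ of weight $\mu$, whence $\mu\in\Lambda^+$ and $V(\mu)\subset V(\lambda_1)\otimes\cdots\otimes V(\lambda_n)$. The one subtle step is the weight-cone comparison between $\mathbb{N}\Delta_L$ and $\mathbb{N}\Delta$ used to rule out the action of $e_\alpha$ for $\alpha\notin\Delta_L$; the remaining pieces (simplicity of $L\cdot v_{\lambda_i}$, tensoring the embeddings, extracting a highest weight vector) are formal.
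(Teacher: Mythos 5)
Your proof is correct, and its overall strategy is the same as the paper's: embed each $V_L(\lambda_i)$ into $V(\lambda_i)$ as the cyclic $L$-module generated by the $G$-highest weight vector, pick an $L$-highest weight vector $v$ of weight $\mu$ in the resulting $L$-submodule of the tensor product, and show that $v$ is in fact a $G$-highest weight vector. The one point where you diverge is the justification that $e_\alpha(v)=0$ for $\alpha\in\Delta\smallsetminus\Delta_L$: the paper writes $v=p\,(v_{\lambda_1}\otimes\cdots\otimes v_{\lambda_n})$ with $p$ supported on the lowering operators $f_\beta$, $\beta\in\Delta_L$, and uses that $e_\alpha$ commutes with $p$, whereas you rule out $e_\alpha(v)\neq 0$ by observing that its weight $\mu+\alpha=\sum_i\lambda_i-(\theta-\alpha)$ with $\theta\in\mathbb{N}\Delta_L$ cannot occur in $V(\lambda_1)\otimes\cdots\otimes V(\lambda_n)$, since the coefficient of $\alpha$ in $\theta-\alpha$ is $-1$. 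Your weight-theoretic variant is equally valid and slightly more economical: it handles arbitrary $n$ at once (the paper reduces to $n=2$ by induction) and it yields the dominance of $\mu$ as a byproduct rather than by the separate pairing computation $\langle\mu,\alpha^\vee\rangle\geqslant\langle\sum_i\lambda_i,\alpha^\vee\rangle$ that the paper carries out first.
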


\begin{proof}
Denote $\Delta' \subset \Delta$ the set of simple roots associated to $L$. Since $\mu \in \Lambda^+_L$, we have $\langle \mu, \alpha^\vee \rangle \geqslant 0$ for every $\alpha \in \Delta'$. On the other hand, being $V_L(\mu) \subset V_L(\lambda_1) \otimes \ldots \otimes V_L(\lambda_n)$, we have $\sum_i \lambda_i - \mu \in \mathbb N \Delta'$: hence we get $\langle \mu, \alpha^\vee \rangle \geqslant \langle \sum_i \lambda_i, \alpha^\vee \rangle$ for every $\alpha \in \Delta \smallsetminus \Delta'$, and it follows that $\mu \in \Lambda^+$ since $\sum_i \lambda_i$ is so.

We now prove the second claim by induction on $n$, the basis being the case $n=2$ (see \cite[Lemma~2.4]{BGMR}). If $\mathfrak a$ is any Lie algebra, denote $\mathfrak U(\mathfrak a)$ the corresponding universal enveloping algebra. Assume $n=2$ and regard $V_L(\lambda_1) \otimes V_L(\lambda_2) \subset V(\lambda_1) \otimes V(\lambda_2)$. Fix maximal vectors $v_1 \in V(\lambda_1)$ and $v_2 \in V(\lambda_2)$ for $B$ and let $p \in \mathfrak U(\mathfrak l\cap\mathfrak u^-) \otimes \mathfrak U(\mathfrak l\cap\mathfrak u^-)$ be such that $p\,(v_1 \otimes v_2) \in V_L(\lambda_1) \otimes V_L(\lambda_2)$ is a maximal vector of weight $\mu$ for the Borel $B \cap L \subset L$: to prove the claim we only need to show that $p\,(v_1 \otimes v_2)$ is a maximal vector for $B$ too. If $\alpha \in \Delta'$ then we have $e_\alpha p\,(v_1 \otimes v_2) = 0$ by hypothesis. On the other hand, if $\alpha \in \Delta \smallsetminus \Delta'$, then $e_\alpha$ commutes with $p$, since by its definition $p$ is supported only on the $f_\alpha$'s with $\alpha \in \Delta'$. Since $v_1 \otimes v_2$ is a maximal vector for $B$, we get then
$e_\alpha p\,(v_1 \otimes v_2) = p\, e_\alpha (v_1 \otimes v_2) = 0$, therefore $p\,(v_1 \otimes v_2)$ generates a simple $G$-module of highest weight $\mu$.

Suppose now $n>2$ and let $\mu' \in \Lambda^+_L$ be such that $V_L(\mu') \subset V_L(\lambda_1) \otimes \ldots \otimes V_L(\lambda_{n-1})$ and $V_L(\mu) \subset V_L(\mu') \otimes V_L(\lambda_n)$. Then $\mu' \in \Lambda^+$ by the first part of the proof, while by the inductive hypothesis we get $V(\mu') \subset V(\lambda_1) \otimes \ldots \otimes V(\lambda_{n-1})$ and $V(\mu) \subset V(\mu') \otimes V(\lambda_n)$, so the claim follows.
\end{proof}

\begin{cor}	\label{cor I: differenza tipo A}
Let $\lambda \in \Lambda^+$ and let $\mu \in \Pi^+(\lambda)$ be such that $\Supp_\Delta(\lambda-\mu)$ is simply laced regarded as a subset of the vertices of the Dynkin diagram of $G$. Then $\mu \in \Pi^+_\mathrm{tr}(\lambda)$.
\end{cor}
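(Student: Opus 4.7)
The plan is to apply Remark \ref{oss I: caratterizzazione-pesi-banali}(i): it suffices to exhibit $n \in \mathbb{N}$ with $V(\mu + (n-1)\lambda) \subset V(\lambda)^{\otimes n}$, which will yield $\mu \in \Pi^+_\mathrm{tr}(\lambda)$. My strategy is to realize the containment already inside a simply laced Levi subgroup, and then promote it to $G$ by means of Lemma \ref{lem I: riduzione-levi}.

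Concretely, I would set $\Delta' := \Supp_\Delta(\lambda - \mu)$, which by hypothesis spans a simply laced subdiagram of the Dynkin diagram of $G$, and take $L$ to be the standard Levi of $G$ with simple roots $\Delta_L = \Delta'$. Since $\lambda, \mu$ are dominant for $G$ they are a fortiori dominant for $L$, and from $\lambda - \mu \in \mathbb{N}\Delta_L$ one reads off $\mu \in \Pi_L^+(\lambda)$. Because $\Delta_L$ is simply laced, condition $(\star)$ is vacuously satisfied in $L$, so Corollary \ref{cor I: pesi banali caso star} applied inside $L$ yields that $\mu$ is trivial for $L$. By Remark \ref{oss I: caratterizzazione-pesi-banali}(i) applied inside $L$, this produces an $n$ with
\[	V_L(\mu + (n-1)\lambda) \subset V_L(\lambda)^{\otimes n},	\]
and Lemma \ref{lem I: riduzione-levi} upgrades the inclusion to $V(\mu + (n-1)\lambda) \subset V(\lambda)^{\otimes n}$, as required.

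The only point that I foresee as genuinely delicate is that Section 1 is developed for a semisimple simply connected group, whereas the Levi $L$ is merely reductive. I would dispose of this bookkeeping by passing to the simply connected cover of the derived subgroup $[L,L]$: its root system is $\Delta_L$, hence simply laced, and the decomposition of $V_L(\lambda)^{\otimes n}$ as an $L$-module coincides with the corresponding decomposition as an $[L,L]$-module, provided the central torus $Z(L)^\circ$ acts by the same character on both sides. The central weight of the candidate summand $V_L(\mu+(n-1)\lambda)$ is automatically correct, since $\lambda - \mu \in \mathbb{Z}\Delta_L$ vanishes on $Z(L)^\circ$, so $\mu + (n-1)\lambda$ and $n\lambda$ restrict to the same character of $Z(L)^\circ$. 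Apart from this minor verification, the statement is an immediate consequence of the machinery already assembled in the excerpt.
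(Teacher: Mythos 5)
Your proposal is correct and follows essentially the same route as the paper: reduce to the Levi $L$ attached to $\Supp_\Delta(\lambda-\mu)$, invoke Corollary \ref{cor I: pesi banali caso star} on its (simply laced) semisimple part, and lift the resulting tensor inclusion back to $G$ via Lemma \ref{lem I: riduzione-levi} and Remark \ref{oss I: caratterizzazione-pesi-banali}(i). Your extra verification of the central character on $Z(L)^\circ$ is a point the paper passes over by simply saying ``applied to the semisimple part of $L$,'' but it is the right justification and does not change the argument.
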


\begin{proof}
Denote $L$ the Levi subgroup associated to $\Delta' = \Supp_\Delta(\lambda-\mu)$. By Corollary \ref{cor I: pesi banali caso star} applied to the semisimple part of $L$ it follows that $\mu \in \Pi^+(\lambda)$ is trivial w.r.t. $L$, hence by Remark \ref{oss I: caratterizzazione-pesi-banali} it follows that there exists $n\in \mathbb{N}$ such that $V_L(\mu + (n-1)\lambda) \subset V_L(\lambda)^{\otimes n}$. Therefore by Lemma \ref{lem I: riduzione-levi} we get $V(\mu + (n-1)\lambda) \subset V(\lambda)^{\otimes n}$, and by Remark \ref{oss I: caratterizzazione-pesi-banali} i) it follows that $\mu \in \Pi^+_\mathrm{tr}(\lambda)$.
\end{proof}

Let $n \in \mathbb{N}$ and consider the set
\[	\mathrm{Tens}_n(G) = \{(\lambda_0, \ldots, \lambda_n) \in (\Lambda^+)^{n+1} \, : \, V(\lambda_0) \subset V(\lambda_1) \otimes \ldots \otimes V(\lambda_n)\}.	\]
Following lemma has been proved in several references, usually in the case $n=2$. Since we will need that, we claim it in a slightly more general form, which is easily reduced to the case $n=2$ proceeding by induction on $n$.

\begin{lem}	[{\cite[Lemma 3.9]{Ku}}]	\label{lem I: tensor semigroup}
The set $\mathrm{Tens}_n(G)$ is a semigroup with respect to the addition.
\end{lem}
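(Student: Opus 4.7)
The plan is to proceed by induction on $n$, reducing to the base case $n=2$ which is the classical tensor product semigroup property.

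For the base case $n=2$, the statement is that if $V(\lambda_0) \subset V(\lambda_1) \otimes V(\lambda_2)$ and $V(\mu_0) \subset V(\mu_1) \otimes V(\mu_2)$, then $V(\lambda_0+\mu_0) \subset V(\lambda_1+\mu_1) \otimes V(\lambda_2+\mu_2)$. This is well-known (see [Ku, Lemma 3.9] as cited). One clean way to see it: rephrase each hypothesis as the non-vanishing of a $G$-invariant in $V(\lambda_0^*) \otimes V(\lambda_1) \otimes V(\lambda_2)$ (respectively the analogous module for the $\mu$'s); via Borel--Weil, such invariants correspond to $G$-invariant sections of certain line bundles on $(G/B)^3$, and the product of the two given sections is a non-zero $G$-invariant section of the line bundle corresponding to $(\lambda_0+\mu_0, \lambda_1+\mu_1, \lambda_2+\mu_2)$ (non-vanishing of the product follows from the irreducibility of $(G/B)^3$). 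Alternatively, one composes the embeddings $V(\lambda_0) \hookrightarrow V(\lambda_1) \otimes V(\lambda_2)$ and $V(\mu_0) \hookrightarrow V(\mu_1) \otimes V(\mu_2)$, reshuffles the tensor factors, applies the Cartan projections $V(\lambda_i) \otimes V(\mu_i) \twoheadrightarrow V(\lambda_i+\mu_i)$, and restricts to the Cartan component $V(\lambda_0+\mu_0) \subset V(\lambda_0) \otimes V(\mu_0)$; Schur's lemma then reduces the problem to verifying non-vanishing on the highest weight vector.

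For the inductive step, assume the claim for $(n-1)$-tuples and let $(\lambda_0, \ldots, \lambda_n), (\mu_0, \ldots, \mu_n) \in \mathrm{Tens}_n(G)$. Decomposing the partial tensor product $V(\lambda_1) \otimes \cdots \otimes V(\lambda_{n-1})$ into isotypic components, the hypothesis $V(\lambda_0) \subset V(\lambda_1) \otimes \cdots \otimes V(\lambda_n)$ forces the existence of some $\nu \in \Lambda^+$ satisfying simultaneously $V(\nu) \subset V(\lambda_1) \otimes \cdots \otimes V(\lambda_{n-1})$ and $V(\lambda_0) \subset V(\nu) \otimes V(\lambda_n)$. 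The same reasoning produces $\xi \in \Lambda^+$ with $V(\xi) \subset V(\mu_1) \otimes \cdots \otimes V(\mu_{n-1})$ and $V(\mu_0) \subset V(\xi) \otimes V(\mu_n)$. The inductive hypothesis applied to $(\nu; \lambda_1, \ldots, \lambda_{n-1})$ and $(\xi; \mu_1, \ldots, \mu_{n-1})$ yields $V(\nu+\xi) \subset V(\lambda_1+\mu_1) \otimes \cdots \otimes V(\lambda_{n-1}+\mu_{n-1})$, while the case $n=2$ applied to $(\lambda_0; \nu, \lambda_n)$ and $(\mu_0; \xi, \mu_n)$ yields $V(\lambda_0+\mu_0) \subset V(\nu+\xi) \otimes V(\lambda_n+\mu_n)$. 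Combining these two inclusions gives $V(\lambda_0+\mu_0) \subset V(\lambda_1+\mu_1) \otimes \cdots \otimes V(\lambda_n+\mu_n)$, completing the induction.

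The main obstacle is the $n=2$ base case; once that is taken for granted (as the author does, by citing Kumar), the extension to arbitrary $n$ is a completely routine induction using only the closure of tensor product decompositions under ``inserting an intermediate factor.''
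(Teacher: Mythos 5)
Your proposal is correct and follows exactly the route the paper indicates: the paper gives no written proof, only the remark that the statement is known for $n=2$ (citing Kumar) and is ``easily reduced to the case $n=2$ proceeding by induction on $n$.'' Your induction step — extracting intermediate weights $\nu$ and $\xi$ from the isotypic decomposition of the first $n-1$ factors, applying the inductive hypothesis and the $n=2$ case, and composing the two inclusions — is precisely the routine reduction the author has in mind, with the details filled in.
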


An easy application of previous lemma which will be very useful for us is the following.

\begin{cor}	\label{cor I: traslazione}
Let $\lambda_0, \ldots, \lambda_n, \in \Lambda^+$ be such that $V(\lambda_0) \subset V(\lambda_1) \otimes \ldots \otimes V(\lambda_n)$. Then, for any $\mu \in \Lambda^+$, it also holds $V(\lambda_0 + \mu) \subset V(\lambda_1 + \mu) \otimes V(\lambda_2) \otimes \ldots \otimes V(\lambda_n)$.
\end{cor}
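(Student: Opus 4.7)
The plan is to deduce this directly from Lemma \ref{lem I: tensor semigroup}, by exhibiting the shifted tuple as a sum in $\mathrm{Tens}_n(G)$ of the given tuple with a trivially available one.

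First I would observe that the tuple $(\mu, \mu, 0, 0, \ldots, 0)$ lies in $\mathrm{Tens}_n(G)$, because $V(0)$ is the trivial one-dimensional $G$-module and hence $V(\mu) \subset V(\mu) \otimes V(0) \otimes \cdots \otimes V(0)$ is just the canonical identification. By hypothesis, $(\lambda_0, \lambda_1, \ldots, \lambda_n)$ also lies in $\mathrm{Tens}_n(G)$. Since $\mathrm{Tens}_n(G)$ is closed under coordinate-wise addition by Lemma \ref{lem I: tensor semigroup}, the sum
\[
    (\lambda_0, \lambda_1, \ldots, \lambda_n) + (\mu, \mu, 0, \ldots, 0) = (\lambda_0+\mu,\; \lambda_1+\mu,\; \lambda_2,\; \ldots,\; \lambda_n)
\]
is again in $\mathrm{Tens}_n(G)$, which is exactly the conclusion $V(\lambda_0+\mu) \subset V(\lambda_1+\mu) \otimes V(\lambda_2) \otimes \cdots \otimes V(\lambda_n)$.

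There is no real obstacle here: the entire content is encapsulated in the semigroup property already recalled, and the only idea is to tensor the distinguished inclusion $V(\mu) \subset V(\mu)$ into the first factor. I would write the proof as a one-line application of Lemma \ref{lem I: tensor semigroup}.
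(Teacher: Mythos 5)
Your proof is correct and is exactly the intended argument: the paper states the corollary as ``an easy application'' of Lemma \ref{lem I: tensor semigroup} without writing out details, and adding the tuple $(\mu,\mu,0,\ldots,0)\in\mathrm{Tens}_n(G)$ to the given one via the semigroup property is precisely that application.
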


\begin{cor}	\label{cor I: differenza radice lunga}
Let $\lambda, \mu \in \Lambda^+$ and let $\nu \in \Pi^+(\mu)$ be such that $\Supp_\Delta(\mu-\nu) \cap \Supp(\lambda) \neq \varnothing$, suppose moreover that $\Supp_\Delta(\mu-\nu)$ is connected and that $\mu - \nu$ is the highest root of the root subsystem generated by $\Supp_\Delta(\mu-\nu)$. Then $V(\lambda + \nu) \subset V(\mu) \otimes V(\lambda)$.
\end{cor}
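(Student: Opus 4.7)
My plan is to reduce the claim, via a Levi subgroup, to the simple observation that the adjoint representation appears in the endomorphisms of any non-trivial irreducible representation of a simple group. Write $\theta = \mu - \nu$, let $\Delta' = \Supp_\Delta(\theta)$, and let $L \subset G$ be the associated Levi subgroup. Since $\lambda$, $\mu$ and $\lambda+\nu$ are $G$-dominant they are $L$-dominant, so by Lemma \ref{lem I: riduzione-levi} it suffices to establish the inclusion inside $L$, namely
\[
V_L(\lambda+\nu) \subset V_L(\mu) \otimes V_L(\lambda).
\]

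Using $\mu = \theta + \nu$ and Corollary \ref{cor I: traslazione} applied inside $L$ to translate by the dominant weight $\nu$, this follows at once from the inclusion $V_L(\lambda) \subset V_L(\theta) \otimes V_L(\lambda)$. Hence everything boils down to proving the latter.

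Because $\Delta'$ is connected, the derived subgroup $L'$ of $L$ is \emph{simple}, and $\theta$ is the highest root of its root system; thus $V_{L'}(\theta)$ is the adjoint representation $\mathfrak{l}'$. The hypothesis $\Supp_\Delta(\theta) \cap \Supp(\lambda) \neq \varnothing$ is precisely the statement that the restriction $\bar\lambda$ of $\lambda$ to a maximal torus of $L'$ is non-zero, so $L'$ acts non-trivially on $V_{L'}(\bar\lambda)$. The action map $\mathfrak{l}' \to \End(V_{L'}(\bar\lambda))$ is therefore a non-zero, hence injective (by simplicity of $\mathfrak{l}'$), morphism of $L'$-modules. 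Passing to the dual, one gets a non-zero element of
\[
\Hom_{L'}\bigl(V_{L'}(\bar\lambda),\; V_{L'}(\theta) \otimes V_{L'}(\bar\lambda)\bigr),
\]
which yields the required inclusion $V_{L'}(\bar\lambda) \subset V_{L'}(\theta) \otimes V_{L'}(\bar\lambda)$. Since $\theta \in \mathbb{Z}\Delta'$, the central character of $L$ on both sides of the desired inclusion $V_L(\lambda) \subset V_L(\theta) \otimes V_L(\lambda)$ agrees, so the $L'$-level inclusion lifts to an $L$-level inclusion, completing the proof.

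The only slightly subtle point is the last step, where one must separate the action of the derived subgroup from that of the center of $L$: this is the reason the connectedness hypothesis on $\Supp_\Delta(\theta)$ is used, so that $V_{L'}(\theta)$ really is the (irreducible) adjoint representation and the argument applies directly.
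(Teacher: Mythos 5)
Your proposal is correct and follows essentially the same route as the paper: reduce to the Levi subgroup $L$ via Lemma \ref{lem I: riduzione-levi}, use Corollary \ref{cor I: traslazione} to translate by $\nu$, and observe that $V_L(\mu-\nu)$ is the adjoint representation of the simple part of $L$, which maps non-trivially to $\End(V_L(\lambda))$ because $\Supp_\Delta(\mu-\nu)\cap\Supp(\lambda)\neq\varnothing$. The paper phrases the key step as surjectivity of the action map $V_L(\mu-\nu)\otimes V_L(\lambda)\to V_L(\lambda)$ rather than injectivity of $\mathfrak{l}'\to\End(V_{L'}(\bar\lambda))$, but these are the same argument.
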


\begin{proof}
Denote $L$ the Levi subgroup associated to $\Supp_\Delta(\mu-\nu)$ and denote $\mathfrak{l}$ its Lie algebra. Consider $\mu - \nu$: by the assumption on $\mu - \nu$, we have an isomorphism of $\mathfrak{l}$-modules $V_L(\mu - \nu) \simeq \mathfrak{l}$. Since $\Supp_\Delta(\mu-\nu) \cap \Supp(\lambda) \neq \varnothing$, the $\mathfrak{l}$-action induces a non-zero morphism $V_L(\mu-\nu) \otimes V_L(\lambda) \rightarrow V_L(\lambda)$ which is surjective by irreducibility, hence we get $V_L(\lambda) \subset V_L(\mu-\nu) \otimes V_L(\lambda)$. By Corollary \ref{cor I: traslazione} this implies $V_L(\lambda + \nu) \subset V_L(\mu) \otimes V_L(\lambda)$, and the claim follows applying Lemma \ref{lem I: riduzione-levi}.
\end{proof}

We now describe an explicit result which we will need in the special case $G = \mathrm{Spin}(2r+1)$, which we will treat in the rest of the paper. Set $\Delta = \{\alpha_1, \ldots, \alpha_r\}$ and denote $\omega_1, \ldots, \omega_r$ the fundamental weights. For convenience, we also denote $\varpi_k = \sum_{j=1}^{k-1} j\alpha_j + k\sum_{j=k}^r \alpha_j$: therefore if $0 < k < r$ we have $\varpi_k = \omega_k$, whereas $\varpi_0 = 0$ and $\varpi_r = 2\omega_r$.

\begin{lem}	\label{lem I: differenza radice positiva}
Let $G = \mathrm{Spin}(2r+1)$. Let $\lambda, \mu \in \Lambda^+$ and $\nu \in \Pi^+(\mu)$.
\begin{itemize}
	\item[i)] If $\mu - \nu \in \Phi^+_l(\lambda)$, then $V(\lambda + \nu) \subset V(\mu) \otimes V(\lambda)$.
	\item[ii)] If $\alpha_r \in \Supp(\lambda+\nu)$ and if $\mu - \nu \in \Phi^+(\lambda)$, then $V(\lambda + \nu) \subset V(\mu) \otimes V(\lambda)$.
	\item[iii)] If $\Supp(\lambda) \neq \{\alpha_r\}$ and if $\mu - \nu \in 2\Phi_s^+(\lambda)$, then $V(\lambda + \nu) \subset V(\mu) \otimes V(\lambda)$.
\end{itemize}
\end{lem}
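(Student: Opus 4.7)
The plan is to proceed by case analysis on the positive root $\theta = \mu - \nu$, using the classification of positive roots of $B_r$ into $e_i - e_j$ and $e_i + e_j$ (long, with supports $\{\alpha_i,\ldots,\alpha_{j-1}\}$ generating $A_{j-i}$ and $\{\alpha_i,\ldots,\alpha_r\}$ generating $B_{r-i+1}$, respectively), and the short roots $e_i = \alpha_i + \ldots + \alpha_r$ (sharing the support of the $e_i + e_j$ type). The main tools are Corollary~\ref{cor I: differenza radice lunga} (one-step inclusion when $\theta$ is the highest root of the subsystem generated by its support), Corollary~\ref{cor I: traslazione} (translation), Lemma~\ref{lem I: riduzione-levi} (Levi reduction), and Lemma~\ref{lem I: tensor semigroup} (the semigroup property of $\mathrm{Tens}_2(G)$).

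For part (i), if $\theta = e_i - e_j$ then its support is simply laced and $\theta$ is the highest root of the resulting $A_{j-i}$, so Corollary~\ref{cor I: differenza radice lunga} applies directly. If $\theta = e_i + e_{i+1}$ then $\theta$ is the highest root of the $B_{r-i+1}$ generated by its support, and again Corollary~\ref{cor I: differenza radice lunga} applies. The remaining subcase $\theta = e_i + e_j$ with $j \geqslant i+2$ is the main obstacle. Here I first reduce via Lemma~\ref{lem I: riduzione-levi} to the Levi subgroup of type $B_{r-i+1}$ associated with $\Supp_\Delta(\theta)$, so we may assume $i=1$. I then induct on $r$, writing $e_1 + e_j = \alpha_1 + (e_2 + e_j)$ and splitting $\lambda = \lambda_1 + \lambda_2$ and $\nu = \nu_1 + \nu_2$ into dominant pieces: Corollary~\ref{cor I: differenza radice lunga} handles the $\alpha_1$-summand when $\alpha_1 \in \Supp(\lambda)$, the inductive hypothesis (applied inside the sub-Levi of type $B_{r-1}$ generated by $\alpha_2,\ldots,\alpha_r$) handles the $(e_2+e_j)$-summand, and Lemma~\ref{lem I: tensor semigroup} recombines them. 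If $\alpha_1 \notin \Supp(\lambda)$, a further application of Lemma~\ref{lem I: riduzione-levi} places us in a smaller instance of the same problem.

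For part (ii), the long root case is handled by (i). If $\theta = e_i$ is short with $i=r$, then $\theta = \alpha_r$ is the highest root of the $A_1$ generated by $\{\alpha_r\}$ and the hypothesis $\mu - \nu \in \Phi^+(\lambda)$ gives $\alpha_r \in \Supp(\lambda)$, so Corollary~\ref{cor I: differenza radice lunga} applies. For $i < r$, the additional hypothesis $\alpha_r \in \Supp(\lambda + \nu)$ allows splitting off an $\omega_r$ summand from $\lambda + \nu$: using the identity $e_i = (e_i + e_r) - \alpha_r$, we apply part (i) to the long root $e_i + e_r$ (whose support meets $\Supp(\lambda)$ by hypothesis) and compensate the $\alpha_r$-discrepancy via the spin contribution absorbed by Corollary~\ref{cor I: traslazione} and Lemma~\ref{lem I: tensor semigroup}.

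For part (iii), with $\theta = 2e_i$ and $\Supp(\lambda) \neq \{\alpha_r\}$, I decompose $2e_i = (e_i - e_r) + (e_i + e_r)$ as a sum of two long positive roots, with supports $\{\alpha_i,\ldots,\alpha_{r-1}\}$ and $\{\alpha_i,\ldots,\alpha_r\}$, respectively. The hypothesis $\Supp(\lambda) \neq \{\alpha_r\}$ combined with $e_i \in \Phi^+_s(\lambda)$ provides a simple root $\alpha_l \in \Supp(\lambda)$ with $l \leqslant r-1$, supplying the non-orthogonality needed for the first summand. After a careful splitting $\lambda = \lambda_1 + \lambda_2$, $\mu = \mu_1 + \mu_2$, $\nu = \nu_1 + \nu_2$ into dominant pieces with $\mu_1 - \nu_1 = e_i - e_r$ and $\mu_2 - \nu_2 = e_i + e_r$, part (i) applies to each piece and Lemma~\ref{lem I: tensor semigroup} yields the required inclusion. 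The main difficulty throughout is maintaining dominance of intermediate and split weights, especially in the induction of part (i) and in the splittings of parts (ii) and (iii); the latter require finer case analysis according to whether $\alpha_l$ lies above or below $\alpha_i$ in the Dynkin diagram.
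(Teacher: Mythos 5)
Your handling of the easy subcases of (i) is fine: when $\theta=\mu-\nu$ has support of type $\mathsf A$, or equals $e_i+e_{i+1}$, it is the highest root of the subsystem generated by its support and Corollary \ref{cor I: differenza radice lunga} applies directly. The gap is in the mechanism you propose for every remaining case, namely: decompose $\theta$ as a sum of positive roots, split $\lambda,\mu,\nu$ into dominant pieces compatible with that decomposition, treat each piece separately and recombine with Lemma \ref{lem I: tensor semigroup}. Such a compatible dominant splitting need not exist. Take $r=2$, $\lambda=\omega_1$, $\nu=0$, $\mu=2\omega_1$, an instance of (iii) with $\theta=2e_1$: your decomposition $2e_1=(e_1-e_2)+(e_1+e_2)$ forces $\nu_1=\nu_2=0$, hence $\mu_1=e_1-e_2=\alpha_1$, which is not dominant, so there is no splitting to which part (i) could be applied. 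Similarly for the hard subcase of (i): with $r=3$, $\lambda=\omega_1$, $\nu=\omega_2$, $\mu=\omega_1+2\omega_3$ (so $\theta=e_1+e_3$), the required inclusion is $V(\omega_1+\omega_2)\subset V(\omega_1+2\omega_3)\otimes V(\omega_1)$, and neither choice $\nu_1=0$ nor $\nu_1=\omega_2$ makes both $\mu_1=\nu_1+\alpha_1$ and $\mu_2=\mu-\mu_1$ dominant. Even where a splitting exists, your induction on the $(e_2+e_j)$-piece needs $\Supp(\lambda_2)$ to meet $\{\alpha_2,\dots,\alpha_r\}$, which is impossible whenever $\Supp(\lambda)=\{\alpha_1\}$ --- a case allowed by the hypothesis $\theta\in\Phi^+_l(\lambda)$. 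The fallback ``a further application of Lemma \ref{lem I: riduzione-levi} places us in a smaller instance'' cannot help, since that lemma only applies to Levi subgroups whose set of simple roots contains all of $\Supp_\Delta(\theta)$, and after your first reduction that is already the whole of $\Delta$. The same structural objection applies to (ii) and (iii); in (ii) the identity $e_i=(e_i+e_r)-\alpha_r$ involves a subtraction that the semigroup $\mathrm{Tens}_2(G)$ does not see, and $\nu-\alpha_r$ need not be dominant.

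What is missing is the paper's actual device for these cases: it does not decompose $\theta$ at all, but instead applies Corollary \ref{cor I: traslazione} twice to strip $\nu$ and $\lambda$ down to (near-)fundamental weights, reducing everything to a short list of explicit inclusions --- $V(\varpi_k+\varpi_j)\subset V(\varpi_j)\otimes V(\varpi_1+\varpi_{k+1})$ for (i), $V(\omega_r)\subset V(\omega_1)\otimes V(\omega_r)$ and $V(\omega_j+\omega_r)\subset V(\omega_j)\otimes V(\omega_1+\omega_r)$ for (ii), $V(\omega_j)\subset V(2\omega_1)\otimes V(\omega_j)$ for (iii) --- which are then checked directly. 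Without this translation step, or some substitute for it, your argument does not close.
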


\begin{proof}
Denote $\theta = \mu - \nu$ and set $\Supp_\Delta(\theta) = \{ \alpha_{p+1}, \ldots, \alpha_q \}$, where $0 \leqslant p < q \leqslant r$.

i) Notice that it holds one of the followings:
\begin{itemize}
	\item[-] $q < r$ and $\theta = \sum_{i=p+1}^{q}\alpha_i$;	
	\item[-] $q=r$ and $\theta = \sum_{i=p+1}^{k}\alpha_i + 2 \sum_{i=k+1}^r \alpha_i$ for some $k$ with $p < k < r$ .
\end{itemize}
Suppose that we are in the first case: then $\theta$ is the highest root of the subsystem generated by $\Supp_\Delta(\theta)$ and the claim follows by Corollary \ref{cor I: differenza radice lunga}. Suppose that we are in the second case: then we have $\mu = \nu - \varpi_p + \varpi_{p+1} - \varpi_k + \varpi_{k+1}$. Since $\mu$ is dominant, it must be $\alpha_p, \alpha_k \in \Supp(\nu)$. Notice also that by Lemma \ref{lem I: riduzione-levi} we may assume that $\Supp_\Delta(\theta) = \Delta$, i.e. $p=0$. Let $\alpha_j \in \Supp(\lambda)$: then by applying Corollary \ref{cor I: traslazione} twice (first with $\nu-\omega_k$ and then with $\lambda-\omega_j$) we are reduced to the following inclusion, which can be checked directly:
\begin{itemize}
	\item[-] If $1\leqslant j \leqslant r$ and $1\leqslant k \leqslant r-1$, then $V(\varpi_k + \varpi_j) \subset V(\varpi_j)\otimes V(\varpi_1 + \varpi_{k+1})$.
\end{itemize}

ii) By part i), we only need to consider the case where $\theta$ is a positive short root. Notice that $\theta = \sum_{i=p+1}^r \alpha_i$ for some $p$ with $0 \leqslant p <r$. By Lemma \ref{lem I: riduzione-levi} we may assume $p=0$, so we have that $\mu = \nu + \omega_1$. Suppose that $\alpha_r \in \Supp(\lambda)$: then by applying Corollary \ref{cor I: traslazione} twice (first with $\nu$ and then with $\lambda-\omega_r$) we are reduced to the following inclusion, which can be checked directly:
\begin{itemize}
	\item[-] $V(\omega_r) \subset V(\omega_1)\otimes V(\omega_r)$;
\end{itemize}
Suppose now that $\alpha_r \in \Supp(\nu)$ and let $\alpha_j \in \Supp(\lambda)$: then by applying Corollary \ref{cor I: traslazione} twice (first with $\nu-\omega_r$ and then with $\lambda-\omega_j$) we are reduced to the following inclusion, which can be checked directly:
\begin{itemize}
	\item[-] If $1\leqslant j \leqslant r$, then $V(\omega_j + \omega_r) \subset V(\omega_j)\otimes V(\omega_1 + \omega_r)$. 
\end{itemize}	

iii) Notice that $\theta = 2\sum_{i=p+1}^r \alpha_i$ for some $p$ with $0 \leqslant p <r$. By Lemma \ref{lem I: riduzione-levi} we may assume $p=0$, so we have that $\mu = \nu + 2\omega_1$. Let $\alpha_j \in \Supp(\lambda)$ with $j < r$: then by applying Corollary \ref{cor I: traslazione} twice (first with $\nu$ and then with $\lambda-\omega_j$) we are reduced to the following inclusion, which can be checked directly:
\begin{itemize}
	\item[-] If $1\leqslant j <r$, then $V(\omega_j) \subset V(2\omega_1) \otimes V(\omega_j)$. 
\qedhere
\end{itemize}
\end{proof}

\section{Trivial weights in the odd orthogonal case}

From now on we will suppose $G = \mathrm{Spin}(2r+1)$. Set $\Delta = \{\alpha_1, \ldots, \alpha_r\}$ and denote $\omega_1, \ldots, \omega_r$ the fundamental weights. For convenience, if $0 \leqslant k \leqslant r$ we also denote $\varpi_k = \sum_{j=1}^{k-1} j\alpha_j + k\sum_{j=k}^r \alpha_j$: therefore $\varpi_k = \omega_k$ if $0 < k < r$, whereas $\varpi_0 = 0$ and $\varpi_r = 2\omega_r$. If $\lambda \in \Lambda^+$ is non-zero we denote by $q(\lambda)$ the maximum such that $\langle \lambda, \alpha_{q(\lambda)}^\vee \rangle \neq 0$, while we set $q(\lambda) = 0$ if $\lambda = 0$. We are going to prove the following combinatorial characterization of trivial weights, the rest of the section will be devoted to its proof.

\begin{teo}	\label{teo II: classificazione pesi banali B}
Suppose that $G = \mathrm{Spin}(2r+1)$. Let $\lambda \in \Lambda^+$, $\mu \in \Pi^+(\lambda)$ and denote $\lambda-\mu = \sum_{i=1}^r a_i \alpha_i$. Then $\mu \in \Pi^+_\mathrm{tr}(\lambda)$ if and only if $a_r$ is even or $a_r > 2\min\{r-q(\lambda), r-q(\mu)\}$.
\end{teo}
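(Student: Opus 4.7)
The plan is to prove both implications using the tensor-product characterization of triviality from Remark \ref{oss I: caratterizzazione-pesi-banali}(i): $\mu \in \Pi^+_\mathrm{tr}(\lambda)$ if and only if there exists $n \in \mathbb{N}$ with $V(\mu + (n-1)\lambda) \subset V(\lambda)^{\otimes n}$. The combinatorial condition on $a_r$ should correspond exactly to the inclusions that can be produced by iterating Lemma \ref{lem I: differenza radice positiva} on the tensor powers of $V(\lambda)$.

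For the sufficient direction ($\Leftarrow$), I would construct the required inclusion by induction. Starting from the trivial inclusion $V(\lambda) \subset V(\lambda)^{\otimes 1}$, at each step I would pass from $V(\Lambda_k) \subset V(\lambda)^{\otimes k}$ to $V(\Lambda_k + \lambda - \theta_{k+1}) \subset V(\lambda)^{\otimes (k+1)}$ by applying one of the three cases of Lemma \ref{lem I: differenza radice positiva} to a carefully chosen positive root $\theta_{k+1}$, with the cumulative subtraction equalling $\lambda - \mu$. Three subcases arise depending on $a_r$: if $a_r = 0$, then $\Supp_\Delta(\lambda - \mu) \subset \{\alpha_1, \ldots, \alpha_{r-1}\}$ is simply laced (type $\mathsf{A}_{r-1}$) and Corollary \ref{cor I: differenza tipo A} applies directly; if $a_r > 0$ is even, I decompose $\lambda - \mu$ into long positive roots of the forms $e_i - e_j$ (contributing $0$ to $a_r$) and $e_i + e_j$ (contributing $2$), and iterate case (i); if $a_r$ is odd, the decomposition must contain a short root $e_i$ (contributing $1$), handled through case (ii), and the hypothesis $a_r > 2\min\{r-q(\lambda), r-q(\mu)\}$ is exactly what is needed to schedule the steps so that $\alpha_r$ enters $\Supp(\Lambda_k + \lambda)$ before the short-root step. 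The interpretation is that the bound $2\min\{r-q(\lambda),r-q(\mu)\}$ measures the minimal amount of ``overhead'' required to push $\alpha_r$ into the support of an intermediate dominant weight by first subtracting long roots of the form $e_i + e_r$.

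For the necessary direction ($\Rightarrow$), assume $V(\mu + (n-1)\lambda) \subset V(\lambda)^{\otimes n}$ but, for contradiction, $a_r$ is odd with $a_r \leq 2\min\{r-q(\lambda), r-q(\mu)\}$. My plan is to reduce to the base case $\lambda = \omega_1$: by Schur-Weyl duality (Proposition \ref{prop II: schur-weyl1}, to be established separately), the irreducibles appearing in $V(\omega_1)^{\otimes n}$ are indexed by partitions $\nu$ with $|\nu| \equiv n \pmod 2$, so that in particular $V((n-1)\omega_1) \not\subset V(\omega_1)^{\otimes n}$ for any $n$. Concretely, I would use Lemma \ref{lem I: riduzione-levi} in combination with a careful choice of Levi subgroup $L \subset G$ (chosen so that the $L$-restrictions of $\lambda$ and $\mu$ realize the parity obstruction), deducing from the hypothesized $G$-inclusion an $L$-inclusion that—after extracting the ``bad'' short-root contribution witnessed by the odd $a_r$—would force a forbidden inclusion of Schur-Weyl type.

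The main obstacle will be the necessary direction. While the parity obstruction at $\lambda = \omega_1$ is clean, transferring it to arbitrary $\lambda$ while preserving the precise numerical threshold $2\min\{r-q(\lambda), r-q(\mu)\}$ requires delicate control of the Levi reduction: one must ensure that no ``escape route'' through additional simple roots cancels the parity obstruction, and that the inequality on $a_r$ is the sharp quantitative expression of this impossibility. The sufficient direction, by contrast, is essentially a bookkeeping exercise with the three cases of Lemma \ref{lem I: differenza radice positiva}, organized carefully around the parity of $a_r$.
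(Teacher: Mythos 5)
Your sufficiency sketch is essentially the paper's argument in outline: the paper (Lemmas \ref{lem II: mu-soddisfa-star}, \ref{lem II: ar-pari}, \ref{lem II: ar-dispari}) also proceeds by induction, at each step adding to $\mu$ a long root or twice a short root of $\Phi^+(\lambda)$ to produce an intermediate dominant $\mu'\in\Pi^+(\lambda)$ with $V(\lambda+\mu)\subset V(\lambda)\otimes V(\mu')$ via Lemma \ref{lem I: differenza radice positiva}, and concludes through the semigroup identity $\Omega(\lambda,\mu')=\Omega(\lambda)$ rather than by building the chain inside $V(\lambda)^{\otimes n}$ directly; your heuristic about scheduling the short-root step after $\alpha_r$ enters the support matches the induction on $r-q(\mu)$ in Lemma \ref{lem II: ar-dispari}. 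All of the actual content there (the explicit roots $\sum_{i=p+1}^{k}\alpha_i+2\sum_{i=k+1}^{r}\alpha_i$, the dominance checks for $\mu'$, and the $a_r=0$ base case via Corollary \ref{cor I: differenza tipo A}) is deferred in your write-up, but the plan is viable.

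The necessity direction, however, has a genuine gap, and it is exactly where you flag ``the main obstacle.'' Your proposed mechanism --- Lemma \ref{lem I: riduzione-levi} with a well-chosen Levi --- cannot work as stated. That lemma transfers inclusions from $L$ to $G$, which is the wrong direction for an obstruction; the converse transfer (from a $G$-inclusion $V(\mu+(n-1)\lambda)\subset V(\lambda)^{\otimes n}$ to an $L$-inclusion) is only available when $\lambda-\mu$ is supported on the simple roots of $L$, and since $\Supp_\Delta(\lambda-\mu)$ generally contains $\alpha_r$ and can reach $\alpha_1$, the only admissible Levi is $G$ itself, so no reduction is achieved. The paper's mechanism is different and is the key missing idea: Lemma \ref{lem II: semigruppi mu<lambda}, which uses the isomorphism $X_{n\omega_1}\simeq X_{n\omega_1,\lambda}$ (valid because $\Supp_\Delta(n\omega_1-\lambda)$ avoids $\alpha_r$, hence $\lambda\in\Pi^+_{\mathrm{tr}}(n\omega_1)$) to produce a rational map $X_{n\omega_1}\dashrightarrow X_\lambda$ regular on the non-vanishing locus of $\phi_\lambda/\phi_{n\omega_1}$, giving the localization $\Omega(\lambda)\subset\Omega(\omega_1)_{n\omega_1-\lambda}$; since the adjoined generator $n\omega_1-\lambda$ does not touch the coordinates $a_i$ with $i\geqslant q(\lambda)$, the sharp threshold is inherited verbatim from the $\omega_1$ case (Corollary \ref{cor II: necessità}). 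Separately, your summary of Proposition \ref{prop II: schur-weyl1} as a pure parity statement (``partitions $\nu$ with $|\nu|\equiv n \bmod 2$'') misses the crux: for $\mathrm{SO}(2r+1)$ the parity of $|\mu|$ \emph{can} differ from that of $n$, by extending $\mu$ with a single column of odd length at most $2r+1-\tilde{\mu}_2^t$, and it is precisely this escape route that produces the threshold $a_r>2(r-q(\mu))$. If the obstruction were pure parity, the theorem's ``or'' clause would be empty, so this point cannot be elided.
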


\begin{oss}	\label{oss II: l(theta)}
Let $\lambda \in \Lambda^+$ and $\mu \in \Pi^+(\lambda)$. Set $\theta = \lambda - \mu \in \mathbb N \Delta$, say $\theta = \sum_{i=1}^r a_i \alpha_i$, and denote $l(\theta)\leqslant r$ the minimum such that $a_i = a_r$ for every $i \geqslant l(\theta)$. Since $\mu$ is dominant, it follows that $\Supp(\theta^+) \subset \Supp(\lambda)$. If $q(\lambda) < i < r$, then we have $a_{i-1} - 2a_i + a_{i+1} = \langle \mu, \alpha_i^\vee \rangle \geqslant 0$, whereas if $q(\lambda) < r$ then $2a_{r-1} - 2a_r = \langle \mu, \alpha_r^\vee \rangle \geqslant 0$. In particular this implies $a_{q(\lambda)} \geqslant a_{q(\lambda)+1} \geqslant \ldots \geqslant a_r$ and it follows that $\max\{q(\lambda), q(\mu)\} = \max\{l(\theta), q(\lambda)\}$.
\end{oss}

By Corollary \ref{cor I: etichette}, the triviality of $\mu$ depends only on the equivalence class of the simple subset $\{\lambda,\mu\}$. Therefore we may restate previous theorem as follows, not dealing with $\lambda$ but just with the semigroup $\Omega(\lambda)$, which depends only on $\Supp(\lambda)$. 

\begin{cor} \label{cor II: descrizione anello delle coordinate B}
Suppose that $G = \mathrm{Spin}(2r+1)$ and let $\lambda\in \Lambda^+$. Then
\[
	- \Omega(\lambda) = \left\{ \theta \doteq \sum_{i=1}^r a_i \alpha_i  \in \mathbb{N} \Delta \, : \, \begin{array}{c}
	\Supp(\theta^+) \subset \Supp(\lambda) \text{ and } \\
	a_r \text{ is even or } a_r > 2\min\{r- l(\theta), r- q(\lambda)\}	\end{array}	\right\}	
\]
where $l(\theta)\leqslant r$ denotes the minimum such that $a_i = a_r$ for every $i \geqslant l(\theta)$.
\end{cor}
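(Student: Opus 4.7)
The plan is to deduce this directly from Theorem \ref{teo II: classificazione pesi banali B} via the description of $\Omega(\lambda)$ in Remark \ref{oss I: caratterizzazione-pesi-banali} ii), which gives
\[
-\Omega(\lambda) = \{n\lambda - \mu \, : \, n \in \mathbb{N},\ \mu \in \Pi^+_\mathrm{tr}(n\lambda)\}.
\]
Thus the corollary amounts to repackaging the combinatorial condition of Theorem \ref{teo II: classificazione pesi banali B} purely in terms of $\theta := n\lambda - \mu$, suppressing any explicit reference to $n$ and $\mu$.

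First I would check the support condition $\Supp(\theta^+) \subset \Supp(\lambda)$. For $\theta = n\lambda - \mu$ with $\mu$ dominant, the $\omega_i$-coefficient of $\theta$ equals $n\langle\lambda,\alpha_i^\vee\rangle - \langle\mu,\alpha_i^\vee\rangle$; at an index $\alpha_i \notin \Supp(\lambda)$ this equals $-\langle\mu,\alpha_i^\vee\rangle \leqslant 0$, so the inclusion is automatic. Conversely, starting from $\theta \in \mathbb{N}\Delta$ with $\Supp(\theta^+) \subset \Supp(\lambda)$, one produces a dominant $\mu = n\lambda - \theta$ by taking $n$ sufficiently large: at $\alpha_i \notin \Supp(\lambda)$ the hypothesis on $\theta^+$ forces $\langle\theta,\alpha_i^\vee\rangle \leqslant 0$, giving $\langle\mu,\alpha_i^\vee\rangle \geqslant 0$ for every $n$; at $\alpha_i \in \Supp(\lambda)$ one has $\langle\lambda,\alpha_i^\vee\rangle \geqslant 1$, so the required inequality holds as soon as $n \geqslant \max_i \langle\theta,\alpha_i^\vee\rangle$.

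Second, I would translate the numerical condition. For $n \geqslant 1$ one has $q(n\lambda) = q(\lambda)$, and applying Remark \ref{oss II: l(theta)} to the pair $(n\lambda,\mu)$ yields $\max\{q(n\lambda),q(\mu)\} = \max\{l(\theta),q(\lambda)\}$, whence $\min\{r-q(n\lambda),r-q(\mu)\} = \min\{r-l(\theta),r-q(\lambda)\}$. Substituting this identity into Theorem \ref{teo II: classificazione pesi banali B} transforms the condition ``$a_r$ is even or $a_r > 2\min\{r-q(n\lambda), r-q(\mu)\}$'' into exactly the inequality of the corollary, establishing both implications.

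The argument is essentially bookkeeping; the genuinely non-trivial inputs are Theorem \ref{teo II: classificazione pesi banali B} itself and the combinatorial identity of Remark \ref{oss II: l(theta)}, which permits replacing $q(\mu)$ (an invariant depending on the chosen lift $\mu$, hence on $n$) by $l(\theta)$ (which depends only on $\theta$). The mild subtlety to beware of is that $q$ and $l$ refer to different bases, namely fundamental weights versus simple roots; it is precisely the dominance of $\mu$ together with $\mu \leqslant n\lambda$ which makes the two compatibly related via the concavity displayed in Remark \ref{oss II: l(theta)}.
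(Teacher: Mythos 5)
Your proposal is correct and follows essentially the same route as the paper: the paper's proof likewise combines Remark \ref{oss I: caratterizzazione-pesi-banali}~ii) with Theorem \ref{teo II: classificazione pesi banali B} and the identity $\max\{q(n\lambda),q(\mu)\}=\max\{l(\theta),q(\lambda)\}$ from Remark \ref{oss II: l(theta)} (which also records the support inclusion $\Supp(\theta^+)\subset\Supp(\lambda)$), choosing $n$ large in the converse direction so that $n\lambda-\theta$ is dominant. Your additional verifications of the support condition are correct bookkeeping that the paper leaves implicit.
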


\begin{proof}
By Remark \ref{oss I: caratterizzazione-pesi-banali} ii) we have $-\Omega(\lambda) = \{n\lambda - \mu \, : \, \mu \in \Pi^+_\mathrm{tr}(n\lambda) \}$. Let $\theta = \sum_{i=1}^r a_i \alpha_i \in \mathbb{N}\Delta$. If $\theta \in - \Omega(\lambda)$, then by previous theorem together with Remark \ref{oss II: l(theta)} we get that $a_r$ is even or that $a_r > 2\min\{r-l(\theta), r- q(\lambda)\}$. Conversely if $\Supp(\theta^+) \subset \Supp(\lambda)$ then $n\lambda -\theta$ is dominant for some $n \in \mathbb{N}$, and if moreover $a_r$ is even or $a_r > 2\min\{r-l(\theta), r-q(\lambda)\} = 2\min\{r-q(n\lambda), r-q(n\lambda - \theta)\}$, then by previous theorem we have $n\lambda - \theta \in \Pi^+_\mathrm{tr}(n\lambda)$, hence $\theta \in -\Omega(\lambda)$.
\end{proof}

\begin{oss} \label{oss II: omega_r-1}
Suppose that $\Supp(\lambda) = \{\alpha_{r-1}\}$. Then previous corollary implies that $\mathrm{SO}(2r+1)$ admits a unique non-normal linear compactification $X$ such that $\widetilde{X}_\lambda \to X \to X_\lambda$, namely $X_\lambda$. If indeed $\mu = \lambda - \sum_{i=1}^r a_i \alpha_i \in \Pi^+(\lambda) \smallsetminus \Pi^+_\mathrm{tr}(\lambda)$, then by Corollary \ref{cor II: descrizione anello delle coordinate B} it must be $a_{r-1} = a_r = 1$, and it follows $a_1 = \ldots = a_{r-2} = 0$.
\end{oss}

We now prove Theorem \ref{teo II: classificazione pesi banali B}, the proof will be split in several lemmas. If $\alpha_r \in \Supp(\lambda)$, then by Corollary \ref{cor I: pesi banali caso star} we have $\Pi^+_\mathrm{tr}(\lambda) = \Pi^+(\lambda)$. Therefore throughout this section we will assume that $\alpha_r \not \in \Supp(\lambda)$. First we will prove that the conditions are necessary (Corollary \ref{cor II: necessità}). A basic case is that of the first fundamental weight, treated in the following proposition, where we deduce the isotypic decomposition of the tensor powers of the the standard representation by the Schur-Weyl duality (see for instance \cite[Appendix F]{GW}).

\begin{prop} \label{prop II: schur-weyl1}
Suppose that $G = \mathrm{Spin}(2r+1)$ and let $n \in \mathbb{N}$. If $\mu \in \Pi^+(n\omega_1)$, denote $n\omega_1 -\mu = \sum_{i=1}^r a_i \alpha_i$. Then $V(\mu) \subset V(\omega_1)^{\otimes n}$ if and only if $a_r$ is even or $a_r > 2(r-q(\mu))$.
\end{prop}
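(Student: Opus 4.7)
The plan is to invoke orthogonal Schur--Weyl duality, combined with the decomposition $\mathrm{O}(2r+1) = \mathrm{SO}(2r+1) \times \langle -I\rangle$, which is valid because $2r+1$ is odd (so $\det(-I)=-1$ and $-I \notin \mathrm{SO}$). Each $\mathrm{SO}(2r+1)$-irrep $V(\mu)$ attached to an integer partition $\mu = (\mu_1\geqslant\ldots\geqslant\mu_r\geqslant 0)$ extends to $\mathrm{O}(2r+1)$ in exactly two ways, differing by tensoring with $\det$. Using the Hodge-type isomorphism $\Lambda^k V(\omega_1) \simeq \Lambda^{2r+1-k} V(\omega_1)$ of $\mathrm{SO}(2r+1)$-modules, I would realize these two extensions as the $\mathrm{O}$-Weyl modules $V^\mu_\mathrm{O}$ and $V^{\tilde\mu}_\mathrm{O}$, where $\tilde\mu$ is obtained from $\mu$ by replacing the first column (of height $\ell(\mu)$) by one of height $2r+1-\ell(\mu)$.

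Next I would invoke classical Brauer--Schur--Weyl duality (see \cite[App.~F]{GW}) to write
\[
V(\omega_1)^{\otimes n} \simeq \bigoplus_\nu \bigl(V^\nu_\mathrm{O}\bigr)^{m^n_\nu}
\]
as $\mathrm{O}(2r+1)$-modules, where $\nu$ runs over partitions satisfying $|\nu|\leqslant n$, $n-|\nu|$ even and $\nu'_1+\nu'_2\leqslant 2r+1$, with all multiplicities $m^n_\nu$ positive. (The parity condition encodes that $-I$ must act by $(-1)^n = (-1)^{|\nu|}$ on the occurring summands.) Restricting to $\mathrm{SO}(2r+1)$, the $\mathrm{SO}$-irrep $V(\mu)$ occurs in $V(\omega_1)^{\otimes n}$ if and only if at least one of $V^\mu_\mathrm{O}$ or $V^{\tilde\mu}_\mathrm{O}$ does. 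Using that $\mu\in\Pi^+(n\omega_1)$ gives $a_r = n-|\mu|\geqslant 0$ automatically, that $q(\mu)=\ell(\mu)$, and that $|\tilde\mu| = |\mu| + (2r+1) - 2\ell(\mu)$, I would check that the first branch reduces to ``$a_r$ even'' and the second to ``$a_r$ odd and $a_r\geqslant 2(r-q(\mu))+1$'' (the shape constraint $\tilde\mu'_1+\tilde\mu'_2\leqslant 2r+1$ being automatic from $\mu'_2\leqslant \mu'_1 = \ell(\mu)$). The disjunction of the two branches is precisely ``$a_r$ even or $a_r>2(r-q(\mu))$'', as required.

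The main technical point will be the identification $V^{\tilde\mu}_\mathrm{O}|_{\mathrm{SO}} \simeq V(\mu)$ when $\ell(\tilde\mu)>r$, that is, the $\mathrm{O}(2r+1)$ modification rule collapsing a ``spillover'' first column of height exceeding $r$ back onto $\mu$. This is precisely what produces the unexpected odd-parity branch $a_r>2(r-q(\mu))$ in the statement; without it the condition would simply be ``$a_r$ even''. Once this identification is in place via the $\Lambda^k \simeq \Lambda^{2r+1-k}$ duality applied to the Weyl construction of $V(\mu)$ inside $\Lambda^{\mu'_1}V(\omega_1)\otimes\cdots\otimes\Lambda^{\mu'_{\mu_1}}V(\omega_1)$, the rest is a routine combinatorial translation between the Brauer constraints on $\nu$ and the stated arithmetic conditions on $a_r$ and $q(\mu)$.
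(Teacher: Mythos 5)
Your proposal is correct and follows essentially the same route as the paper: both rest on the Brauer/orthogonal Schur--Weyl description of $V(\omega_1)^{\otimes n}$ (the conditions $|\tilde\mu|\leqslant n$, $|\tilde\mu|\equiv n \bmod 2$, $\tilde\mu_1^t+\tilde\mu_2^t\leqslant 2r+1$, cited from \cite[App.~F]{GW}), together with the two-branch analysis in which $\tilde\mu=\mu$ yields ``$a_r$ even'' and the associated partition with first column of height $2r+1-q(\mu)$ yields ``$a_r$ odd and $a_r>2(r-q(\mu))$''. The only difference is that you sketch a derivation of the modification rule (via $\mathrm{O}=\mathrm{SO}\times\{\pm I\}$ and $\Lambda^k\simeq\Lambda^{2r+1-k}$) that the paper simply imports from the reference.
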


\begin{proof}
Embed standardly $\mathrm{SO}(2r+1)$ in $\mathrm{GL}(2r+1)$ and denote $\mathfrak{h} \subset \widetilde{\mathfrak{h}}$ the respective Cartan subalgebras of diagonal matrices. Denote $\varepsilon_1, \ldots, \varepsilon_{2r+1}$ the basis of $\widetilde{\mathfrak{h}}^*$ defined by $\varepsilon_i(\diag(a_1,\ldots,a_{2r+1})) = a_i$ and, for any weight $\lambda = \sum_{i=1}^r \lambda_i \varepsilon_i$, denote $|\lambda| = \sum_{i=1}^r \lambda_i$. With respect to this basis $\mu$ is expressed as follows
$$
	\mu = (n-a_1)\varepsilon_1 + \sum_{i=2}^r (a_{i-1} - a_i) \varepsilon_i.
$$
By the Schur-Weyl duality it follows that $ V(\mu) \subset V(\omega_1)^{\otimes n}$ if and only if $\mu$ extends to a dominant weight $\tilde{\mu} = \sum_{i=1}^{2r+1} \tilde{\mu}_i \varepsilon_i \in \widetilde{\mathfrak{h}}^*$ such that
\[	\left\{ \begin{array}{l}
		|\tilde{\mu}|\leqslant n \\
		|\tilde{\mu}| \equiv n \quad \mathrm{mod} \;2 \\
		\tilde{\mu}_1^t + \tilde{\mu}_2^t \leqslant 2r+1
	\end{array} \right.	\]
where $\tilde{\mu}^t = (\tilde{\mu}_1^t, \ldots, \tilde{\mu}_{\tilde{\mu}_1}^t)$ denotes the transposed of $\tilde{\mu} = (\tilde{\mu}_1, \ldots, \tilde{\mu}_{2r+1})$ regarded as a partition of $|\tilde{\mu}|$.

Suppose that $V(\mu) \subset V(\omega_1)^{\otimes n}$ and let $\tilde{\mu} \in \widetilde{\mathfrak h}^*$ be an extension of $\mu$ as above. Then either $\tilde{\mu}_i = 0$ for every $i>r$ or
\[	\tilde{\mu} = \sum_{i=1}^{\tilde{\mu}_2^t} \tilde{\mu}_i \varepsilon_i + \sum_{i=\tilde{\mu}_2^t+1}^{\tilde{\mu}_1^t} \varepsilon_i	\]
with $\tilde{\mu}_i \geqslant 2$ for $i \leqslant \tilde{\mu}_2^t$. Suppose that $\tilde{\mu}$ is of the first type: then $a_r$ is even since $ a_r = n - |\mu| = n - |\tilde{\mu}| \equiv 0$ mod 2. If instead $\tilde{\mu}$ is of the second type, then $\tilde{\mu}_1^t + \tilde{\mu}_2^t \leqslant 2r+1$ implies $q(\mu) = 2r + 1 - \tilde{\mu}_1^t$ and we get $a_r > 2(r-q(\mu))$ since 
\[	a_r = n - |\mu| = n - |\tilde{\mu}| + 2(\tilde{\mu}_1^t - r - 1) + 1 = n - |\tilde{\mu}| + 2(r -q(\mu)) + 1.	\]

Suppose conversely that $a_r$ is even or that $a_r > 2(r-q(\mu))$, let's show that $V(\mu) \subset V(\omega_1)^{\otimes n}$. Define the weight $\tilde{\mu} \in \widetilde{\mathfrak{h}}^*$ as follows:
\[	\tilde{\mu} = \left\{ \begin{array}{ll}
		\sum_{i=1}^{q(\mu)} \mu_i \varepsilon_i & \mathrm{if \;} a_r \mathrm{\; is \; even}\\
		\sum_{i=1}^{q(\mu)} \mu_i \varepsilon_i + \sum_{i=q(\mu)+1}^{2r-q(\mu)+1} \varepsilon_i & \mathrm{if \;} a_r > 2(r-q(\mu)) \mathrm{\; is \; odd}
	       \end{array} \right.	\]
Then $\tilde{\mu}$ satisfies the conditions of the Schur-Weyl duality and it follows $V(\mu) \subset V(\omega_1)^{\otimes n}$.
\end{proof}

Following lemma will allow us to deduce the necessity of the conditions in Theorem \ref{teo II: classificazione pesi banali B} from the case $\Supp(\lambda) = \{\alpha_1\}$.

\begin{lem}	\label{lem II: semigruppi mu<lambda}
Let $\lambda \in \Lambda^+$ and $\mu \in \Pi^+_\mathrm{tr}(\lambda)$. Then $\Bbbk[X^\circ_\mu] \subset  \Bbbk[X^\circ_\lambda]_{(\phi_\mu/\phi_\lambda)}$. In particular $\Omega(\mu)\subset \Omega(\lambda)_{\lambda-\mu}$, where the latter denotes the semigroup generated in $\mathbb{Z} \Delta$ by $\Omega(\lambda)$ together with $\lambda-\mu$.
\end{lem}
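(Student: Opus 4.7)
The plan is to exploit the linear projection from $X_{\lambda,\mu}$ onto $X_\mu$ and combine it with the triviality hypothesis $X_{\lambda,\mu}\simeq X_\lambda$.

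First, the linear projection $p:\mathbb{P}(\End(V(\lambda)^*)\oplus\End(V(\mu)^*))\dashrightarrow \mathbb{P}(\End(V(\mu)^*))$ is $G\times G$-equivariant and defined wherever the $\End(V(\mu)^*)$-component is nonzero. Restricting, it induces a rational equivariant map $\pi:X_{\lambda,\mu}\dashrightarrow X_\mu$ sending $[\mathrm{Id}_{\lambda,\mu}]$ to $[\mathrm{Id}_\mu]$, hence dominating $X_\mu$. Since the coordinate $\phi_\mu$ is a linear functional supported on the $\End(V(\mu)^*)$-factor, on the open subset $\{\phi_\mu\neq 0\}\subset X_{\lambda,\mu}$ the map $\pi$ is a morphism; moreover it sends $X^\circ_{\lambda,\mu}\cap\{\phi_\mu\neq 0\}$ into $X^\circ_\mu$.

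Second, invoke the triviality hypothesis: the $G\times G$-equivariant isomorphism $X_{\lambda,\mu}\simeq X_\lambda$ identifies the distinguished affine opens $X^\circ_{\lambda,\mu}$ and $X^\circ_\lambda$, since both coincide with the complement of the zero locus of the highest weight section $\phi_\lambda$, so that $\Bbbk[X^\circ_{\lambda,\mu}]=\Bbbk[X^\circ_\lambda]$ as subrings of $\Bbbk(G_\mathrm{ad})$. The function $\phi_\mu/\phi_\lambda\in \Bbbk[X^\circ_{\lambda,\mu}]$ is $B\times B^-$-semiinvariant of weight $(\mu-\lambda,\mu^*-\lambda^*)$: by sphericity of $G$ it is uniquely determined up to a scalar by its weight, and it does lie in $\Bbbk[X^\circ_\lambda]$ since the triviality of $\mu$ gives $\mu-\lambda\in\Omega(\lambda)$. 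Consequently the open subset $X^\circ_{\lambda,\mu}\cap\{\phi_\mu\neq 0\}$ is identified with the principal open $(X^\circ_\lambda)_{\phi_\mu/\phi_\lambda}$, whose coordinate ring is $\Bbbk[X^\circ_\lambda]_{(\phi_\mu/\phi_\lambda)}$. Pulling back along $\pi$ yields the first claim: $\Bbbk[X^\circ_\mu]\subset \Bbbk[X^\circ_\lambda]_{(\phi_\mu/\phi_\lambda)}$.

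For the semigroup statement, apply Remark \ref{oss I: Omega(Pi)}: the $B\times B^-$-weights appearing in $\Bbbk[X^\circ_\mu]$ form the semigroup $\Omega(\mu)$, while those appearing in $\Bbbk[X^\circ_\lambda]_{(\phi_\mu/\phi_\lambda)}$ are generated by $\Omega(\lambda)$ together with the weight $\lambda-\mu$ of $(\phi_\mu/\phi_\lambda)^{-1}$, i.e., they form precisely $\Omega(\lambda)_{\lambda-\mu}$. The ring inclusion thus yields $\Omega(\mu)\subset \Omega(\lambda)_{\lambda-\mu}$. The main subtle point is ensuring that the triviality isomorphism respects the distinguished affine opens and transfers the rational function $\phi_\mu/\phi_\lambda$ correctly; both facts follow from the uniqueness, up to a scalar, of $B\times B^-$-semiinvariants of a given weight in $\Bbbk(G_\mathrm{ad})$.
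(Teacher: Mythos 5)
Your proof is correct and follows essentially the same route as the paper: the paper also obtains the inclusion by producing a $G\times G$-equivariant rational map to $X_\mu$ (there phrased via the line bundle on $X_\lambda \simeq X_{\lambda,\mu}$ whose sections contain $\End(V(\mu))$, which is exactly your linear projection transported through the triviality isomorphism), observing it is regular on the principal open set $\big(X^\circ_\lambda\big)_{(\phi_\mu/\phi_\lambda)}$, and then reading off the semigroup statement from Remark \ref{oss I: Omega(Pi)}. Your write-up just makes the identification $\Bbbk[X^\circ_{\lambda,\mu}]=\Bbbk[X^\circ_\lambda]$ and the domain of regularity more explicit.
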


\begin{proof}
Since $X_\lambda \simeq X_{\lambda, \mu}$, it follows that $X_\lambda$ is endowed with a linearized ample line bundle $\mathcal{L}$ possessing a $B\times B^-$-semiinvariant section $s_\mu$ of weight $(\mu,\mu^*)$ which generates a submodule of $\Gamma(X_\lambda,\mathcal{L})$ isomorphic to $\End(V(\mu))$. Correspondingly we get a rational application $X_\lambda \dashrightarrow X_\mu$ which is regular in the affine set $\big(X^\circ_\lambda\big)_{(\phi_\mu/\phi_\lambda)} \subset X_\lambda^\circ$ defined by the non-vanishing of $\phi_\mu/\phi_\lambda \in \Bbbk[X^\circ_\lambda]$, and it follows that $\Bbbk[X^\circ_\mu] \subset  \Bbbk[X^\circ_\lambda]_{(\phi_\mu/\phi_\lambda)}$. The second claim follows by the first one applying Remark \ref{oss I: Omega(Pi)}.
\end{proof}

\begin{cor}	\label{cor II: necessità}
Suppose that $G= \mathrm{Spin}(2r+1)$ and let $\lambda \in \Lambda^+$ be such that $\alpha_r \not \in \Supp(\lambda)$. Let $\mu \in \Pi^+_\mathrm{tr}(\lambda)$ and denote $\lambda - \mu = \sum_{i=1}^r a_i \alpha_i$, then either $a_r$ is even or $a_r > 2\min\{r-q(\lambda), r-q(\mu)\}$.
\end{cor}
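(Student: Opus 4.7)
The plan is to reduce the general statement to Proposition \ref{prop II: schur-weyl1}, i.e.~to the case of tensor powers of the standard representation $V(\omega_1)$. The main bookkeeping is to track the $\alpha_r$-coefficient of weights under the embedding $V(\lambda) \hookrightarrow V(\omega_1)^{\otimes |\lambda|}$.

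First I would split on whether $\alpha_r \in \Supp(\mu)$. If $\alpha_r \in \Supp(\mu)$ then $q(\mu)=r$, so $\min\{r-q(\lambda), r-q(\mu)\}=0$ and the condition to prove reduces to ``$a_r$ is even or $a_r\geqslant 1$''; this is automatic because $a_r\in\mathbb{N}$. Hence I may suppose $\alpha_r\notin \Supp(\mu)$. In this situation both $\lambda$ and $\mu$ lie in the sublattice spanned by $\omega_1,\ldots,\omega_{r-1}$, so they admit expansions $\lambda = \sum_{i=1}^{r-1}\lambda_i\varepsilon_i$ and $\mu = \sum_{i=1}^{r-1}\mu_i\varepsilon_i$ with $\lambda_i,\mu_i\in\mathbb{N}$. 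A direct computation using $\omega_k = \sum_{j<k}j\alpha_j + k\sum_{j\geqslant k}\alpha_j$ (for $k<r$) shows that the $\alpha_r$-coefficient of $\lambda$ expressed over $\Delta$ equals $|\lambda| \doteq \sum_i\lambda_i$, and similarly for $\mu$; in particular $a_r = |\lambda|-|\mu|$.

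Next I would apply Proposition \ref{prop II: schur-weyl1} twice. First, to show $V(\lambda)\subset V(\omega_1)^{\otimes |\lambda|}$: a check like the one above gives $|\lambda|\omega_1 - \lambda \in \mathbb{N}\Delta$, and the $\alpha_r$-coefficient of this difference is $|\lambda|-|\lambda|=0$, which is even, so the proposition applies. Tensoring, $V(\lambda)^{\otimes n}\subset V(\omega_1)^{\otimes n|\lambda|}$ for every $n$. Since $\mu\in\Pi^+_\mathrm{tr}(\lambda)$, by Remark \ref{oss I: caratterizzazione-pesi-banali} i) there exists $n$ with $V(\mu+(n-1)\lambda)\subset V(\lambda)^{\otimes n}$, hence $V(\mu+(n-1)\lambda)\subset V(\omega_1)^{\otimes n|\lambda|}$.

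Finally, I would apply Proposition \ref{prop II: schur-weyl1} to this last inclusion. The $\alpha_r$-coefficient of $n|\lambda|\omega_1 - (\mu + (n-1)\lambda)$ equals
\[
n|\lambda| - \bigl(|\mu| + (n-1)|\lambda|\bigr) \;=\; |\lambda|-|\mu| \;=\; a_r,
\]
and clearly $q(\mu+(n-1)\lambda)=\max\{q(\lambda),q(\mu)\}$. Therefore the proposition yields that $a_r$ is even or
\[
a_r \;>\; 2\bigl(r-\max\{q(\lambda),q(\mu)\}\bigr) \;=\; 2\min\{r-q(\lambda),\,r-q(\mu)\},
\]
which is the desired conclusion. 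No step is genuinely hard; the only point that requires some care is matching the $\alpha_r$-coefficient on the two sides of the inclusion $V(\mu+(n-1)\lambda)\subset V(\omega_1)^{\otimes n|\lambda|}$, for which the identification $a_r=|\lambda|-|\mu|$ (valid because $\alpha_r\notin\Supp(\lambda)\cup\Supp(\mu)$) is the key input.
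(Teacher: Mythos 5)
Your argument is correct, and it reduces to Proposition \ref{prop II: schur-weyl1} by a genuinely more direct route than the paper's. The paper first establishes Lemma \ref{lem II: semigruppi mu<lambda} (triviality of $\mu$ gives $\Bbbk[X^\circ_\mu]\subset\Bbbk[X^\circ_\lambda]_{(\phi_\mu/\phi_\lambda)}$, hence $\Omega(\mu)\subset\Omega(\lambda)_{\lambda-\mu}$), uses it to write $\mu-\lambda=\mu'-m\omega_1+k(n\omega_1-\lambda)$ with $\mu'\in\Pi^+_\mathrm{tr}(m\omega_1)$, and then must run a small case analysis relating $q(\mu')$ to $q(\lambda)$ and $q(\mu)$. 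You instead compose module inclusions directly, $V(\mu+(n-1)\lambda)\subset V(\lambda)^{\otimes n}\subset V(\omega_1)^{\otimes n|\lambda|}$, and apply the necessity direction of Schur--Weyl to the composite; the weight $\mu+(n-1)\lambda$ has transparent support $\Supp(\lambda)\cup\Supp(\mu)$ and its $\alpha_r$-coefficient is read off from the identity (which you correctly isolate as the key bookkeeping input) that for a dominant weight orthogonal to $\alpha_r$ the $\alpha_r$-coefficient over $\Delta$ equals the sum of its $\varepsilon$-coordinates. This bypasses Lemma \ref{lem II: semigruppi mu<lambda} and the case analysis entirely, at the price of losing that lemma's statement as a byproduct (it is not used elsewhere in the paper, so nothing is really lost). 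Two points deserve a sentence in a polished write-up: the preliminary split on $\alpha_r\in\Supp(\mu)$, which you do handle, and the fact that one may take $n\geqslant 2$ in $V(\mu+(n-1)\lambda)\subset V(\lambda)^{\otimes n}$ (for $n=1$ the inclusion forces $\mu=\lambda$ and $a_r=0$), so that $q(\mu+(n-1)\lambda)=\max\{q(\lambda),q(\mu)\}$ as you claim.
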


\begin{proof}
Since $\alpha_r \not \in \Supp(\lambda)$, we have $\lambda \in \mathbb{Z}\Delta$ and there exists $n>0$ such that $\lambda \leqslant n\omega_1$ with $\Supp_\Delta(n\omega_1 - \lambda) \subset \{\alpha_1, \ldots, \alpha_{q(\lambda)-1}\}$. Since $\alpha_r \not \in \Supp_\Delta(n\omega_1 - \lambda)$, we have $V(\lambda) \subset V(\omega_1)^{\otimes n}$, hence $\lambda - n\omega_1 \in \Omega(\omega_1)$ by Remark \ref{oss I: caratterizzazione-pesi-banali} ii). On the other hand by Corollary \ref{cor I: criterio semigruppi} we have $\Omega(\omega_1) = \Omega(n\omega_1)$, hence $\lambda \in \Pi^+_\mathrm{tr}(n\omega_1)$ by Remark \ref{oss I: caratterizzazione-pesi-banali} i). By Lemma \ref{lem II: semigruppi mu<lambda} we get then $\mu - \lambda \in \Omega(\lambda) \subset \Omega(\omega_1)_{n\omega_1 - \lambda}$, hence by Remark \ref{oss I: caratterizzazione-pesi-banali} ii) they exist $k,m \in \mathbb{N}$ and $\mu' \in \Pi^+_\mathrm{tr}(m\omega_1)$ such that $\mu - \lambda = \mu' - m\omega_1 + k(n\omega_1 - \lambda)$.

Denote $m\omega_1 - \mu' = \sum a_i' \alpha_i$. By the definition of $n$ it follows that $a_i = a'_i$ for all $i \geqslant q(\lambda)$: hence by Proposition \ref{prop II: schur-weyl1} either $a_r$ is even or $a_r > 2 \min\{r-1, r-q(\mu')\}$. If $\mu' = 0$, then $q(\mu') = 0$, hence $a_r > 2(r-1) \geqslant 2(r-q(\lambda))$. Suppose instead $\mu' \neq 0$ and assume that $q(\mu') > q(\lambda)$: then it must be $q(\mu') = q(\mu)$, therefore we have $2(r-q(\mu')) \geqslant 2\min\{r-q(\lambda), r-q(\mu)\}$.
\end{proof}

We now show that the conditions of Theorem \ref{teo II: classificazione pesi banali B} are sufficient. We distinguish three different cases:
\begin{itemize}
	\item[i)]	$a_{r-1} \neq a_r$, i.e. $\alpha_r \in \Supp(\mu)$ (Lemma \ref{lem II: mu-soddisfa-star}).
	\item[ii)]	$a_{r-1} = a_r$ is even (Lemma \ref{lem II: ar-pari}).
	\item[iii)] $a_{r-1} = a_r > 2 \min\{r-q(\lambda), r-q(\mu)\}$ is odd (Lemma \ref{lem II: ar-dispari}).
\end{itemize}

\begin{lem} \label{lem II: mu-soddisfa-star}
Let $\lambda \in \Lambda^+$ be such that $\alpha_r \not \in \Supp(\lambda)$ and let $\mu \in \Pi^+(\lambda)$. If $\alpha_r \in \Supp(\mu)$, then $\mu \in \Pi^+_\mathrm{tr}(\lambda)$.  
\end{lem}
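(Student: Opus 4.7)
The plan is to prove $\mu \in \Pi^+_\mathrm{tr}(\lambda)$ by induction on $|\lambda-\mu|=\sum_i a_i$. The case $\mu = \lambda$ is trivial, so suppose $|\lambda-\mu|>0$. The idea is to produce a positive root $\beta \in \Phi^+(\lambda)$ such that $\mu' := \mu+\beta \in \Pi^+(\lambda)$ still has $\alpha_r \in \Supp(\mu')$, and such that Lemma \ref{lem I: differenza radice positiva} ii) applies to yield $V(\lambda+\mu) \subset V(\mu') \otimes V(\lambda)$. Once such $\beta$ is found, the inductive hypothesis gives an $n$ with $V(\mu' + (n-1)\lambda) \subset V(\lambda)^{\otimes n}$, and applying the translation principle (Corollary \ref{cor I: traslazione}) to the previous inclusion produces $V(\mu + n\lambda) \subset V(\lambda)^{\otimes(n+1)}$; by Remark \ref{oss I: caratterizzazione-pesi-banali} i), this means $\mu \in \Pi^+_\mathrm{tr}(\lambda)$.

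I split on $a_r$. If $a_r=0$, then $\Supp_\Delta(\lambda-\mu) \subset \{\alpha_1,\ldots,\alpha_{r-1}\}$ is a simply laced (type $A_{r-1}$) subdiagram, so Corollary \ref{cor I: differenza tipo A} yields the conclusion directly, with no step needed. So assume $a_r>0$; then Remark \ref{oss II: l(theta)} together with $\alpha_r \in \Supp(\mu)$ gives $a_{r-1}>a_r\geqslant 1$. The key combinatorial observation is that $a_k \geqslant 1$ for every $k \in [q(\lambda),r]$: the dominance inequalities $\langle \mu,\alpha_k^\vee\rangle \geqslant 0$ for $q(\lambda)<k<r$ (and its variant at $k=r-1$) translate, since $\langle \lambda,\alpha_k^\vee\rangle = 0$ there, into the discrete convexity $a_{k-1}+a_{k+1} \geqslant 2a_k$, so the first differences $a_{k+1}-a_k$ are non-decreasing on $[q(\lambda),r]$; combined with $a_r-a_{r-1}<0$ this forces $a_{k+1}-a_k \leqslant -1$ throughout the interval, hence $a_k \geqslant a_r + (r-k) \geqslant r-k+1 \geqslant 1$. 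If $i$ is the smallest index with $a_k \geqslant 1$ for all $k \in [i,r]$, it follows that $1 \leqslant i \leqslant q(\lambda) \leqslant r-1$, and I take $\beta = \varepsilon_i = \alpha_i + \alpha_{i+1} + \cdots + \alpha_r \in \Phi^+_s(\lambda) \subset \Phi^+(\lambda)$.

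It remains to check that $\mu+\beta \in \Pi^+(\lambda)$ and $\alpha_r \in \Supp(\mu+\beta)$. The inequality $\mu+\beta \leqslant \lambda$ is immediate from the choice of $i$. The dominance of $\mu+\beta$ is automatic at all simple roots except possibly $\alpha_{i-1}$ (when $i \geqslant 2$); there it amounts to $\alpha_{i-1} \in \Supp(\mu)$, which follows from the identity $\langle \mu,\alpha_{i-1}^\vee\rangle = \langle \lambda,\alpha_{i-1}^\vee\rangle + a_{i-2} + a_i \geqslant a_i \geqslant 1$ (using $a_{i-1}=0$ by the minimality of $i$). Finally, subtracting $\beta$ from $\lambda-\mu$ decreases both $a_{r-1}$ and $a_r$ by $1$, so the positive difference $a_{r-1}-a_r$ is preserved and hence $\alpha_r \in \Supp(\mu+\beta)$. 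Since $\alpha_r \in \Supp(\mu) \subset \Supp(\lambda+\mu)$ and $\beta \in \Phi^+(\lambda)$, Lemma \ref{lem I: differenza radice positiva} ii) yields the desired inclusion, closing the induction. The main technical point is the convexity/positivity argument giving $a_k \geqslant 1$ on $[q(\lambda),r]$ and hence $i \leqslant q(\lambda)$; the remaining verifications are then direct.
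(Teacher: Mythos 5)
Your proof is correct and follows essentially the same route as the paper: your root $\beta=\alpha_i+\cdots+\alpha_r$ is exactly the paper's correction term $\sum_{i=p+1}^r\alpha_i$ (your $i$ equals its $p+1$), the key inclusion comes from the same Lemma \ref{lem I: differenza radice positiva} ii), and the induction (on $\sum a_i$ rather than on $a_{r-1}a_r$) closes in the same way. Your explicit convexity argument showing $a_k\geqslant 1$ on $[q(\lambda),r]$, hence $i\leqslant q(\lambda)$ and $\beta\in\Phi^+(\lambda)$, is a welcome detail that the paper leaves implicit.
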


\begin{proof}
We proceed by induction on $a_{r-1}a_r$. Suppose that either $a_{r-1}= 0$ or $a_r = 0$: then $\Supp_\Delta(\lambda - \mu)$ has all components of type ${\sf A}$ and the claim follows by Corollary \ref{cor I: differenza tipo A}. Suppose now that $a_{r-1}$ and $a_r$ are both non-zero and denote $p < r-1$ the maximum such that $a_p = 0$, or set $p=0$ if $a_i\neq 0$ for all $i$. Define
\[	\mu' = \mu + \sum_{i = p+1}^r \alpha_i = \mu - \varpi_p + \varpi_{p+1}.	\]
Notice that $\mu'$ is dominant: if indeed $p>0$, then $\alpha_p \in \Supp(\mu)$ since $\langle \mu, \alpha_p^\vee \rangle \geqslant \langle \lambda, \alpha_p^\vee \rangle + a_{p+1} > 0$. Therefore $\mu' \in \Pi^+(\lambda)$ and by construction we have $\mu < \mu'$. Hence by Lemma \ref{lem I: differenza radice positiva} ii) it follows $V(\lambda + \mu)\subset V(\lambda) \otimes V(\mu')$ and we get $\mu-\lambda \in \Omega(\lambda,\mu')$.

Consider now $\mu'$ and denote $\lambda - \mu' = \sum a'_i \alpha_i$: then $\alpha_r \in \Supp(\mu')$ and $a'_{r-1}a'_r < a_{r-1}a_r$, so by the inductive hypothesis it follows that $\mu' \in \Pi^+_\mathrm{tr}(\lambda)$ and by Remark \ref{oss I: caratterizzazione-pesi-banali} we get $\Omega(\lambda, \mu') = \Omega(\lambda)$. It follows then $\mu - \lambda \in \Omega(\lambda)$, i.e. $\mu \in \Pi^+_\mathrm{tr}(\lambda)$.
\end{proof}

\begin{lem} \label{lem II: ar-pari}
Let $\lambda \in \Lambda^+$ be such that $\alpha_r \not \in \Supp(\lambda)$, let $\mu \in \Pi^+(\lambda)$ and denote $\lambda -\mu = \sum_{i=1}^r a_i \alpha_i$. If $a_{r-1} = a_r$ is even, then $\mu \in \Pi^+_\mathrm{tr}(\lambda)$.
\end{lem}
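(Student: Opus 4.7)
I would argue by induction on the non-negative integer $k$ with $a_{r-1} = a_r = 2k$. For the base case $k = 0$ one has $a_{r-1} = a_r = 0$, so $\Supp_\Delta(\lambda - \mu) \subseteq \{\alpha_1, \ldots, \alpha_{r-2}\}$ is simply laced of type $\mathsf{A}_{r-2}$; Corollary \ref{cor I: differenza tipo A} then yields $\mu \in \Pi^+_\mathrm{tr}(\lambda)$.

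For the inductive step $k \geq 1$, the goal is to exhibit an intermediate dominant weight $\mu^* \in \Pi^+(\lambda)$ with $\mu < \mu^* \leq \lambda$ such that $\lambda - \mu^*$ has both its $\alpha_{r-1}$- and $\alpha_r$-coefficients equal to $2(k-1)$, and such that $\mu^* - \mu$ belongs either to $\Phi^+_l(\lambda)$ or to $2\Phi^+_s(\lambda)$. By the inductive hypothesis applied to $\mu^*$ one gets $\mu^* \in \Pi^+_\mathrm{tr}(\lambda)$, hence $\Omega(\mu^*,\lambda) = \Omega(\lambda)$ by Remark \ref{oss I: caratterizzazione-pesi-banali}(i); Lemma \ref{lem I: differenza radice positiva} (part i) in the first case, part iii) in the second) then yields $V(\lambda + \mu) \subset V(\mu^*) \otimes V(\lambda)$, i.e.\ $\mu - \lambda \in \Omega(\mu^*,\lambda) = \Omega(\lambda)$, so that $\mu \in \Pi^+_\mathrm{tr}(\lambda)$ by Remark \ref{oss I: caratterizzazione-pesi-banali}(i) again.

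The concrete construction of $\mu^*$ is guided by $p_0 := \max\{i \in \{0,\ldots,r\} : a_i \leq 1\}$ (with the convention $a_0 := 0$). The monotonicity $a_{q(\lambda)} \geq \cdots \geq a_r$ of Remark \ref{oss II: l(theta)}, combined with $a_r \geq 2$, forces $p_0 \leq q(\lambda) - 1$, so both the short root $\alpha_{p_0+1} + \cdots + \alpha_r$ and the long root $\alpha_{p_0} + 2\alpha_{p_0+1} + \cdots + 2\alpha_r$ (the latter being of type $\varepsilon_{p_0} + \varepsilon_{p_0+1}$) have support meeting $\Supp(\lambda)$. If $a_{p_0} = 0$ (including $p_0 = 0$), I take $\mu^* = \mu + 2(\alpha_{p_0+1} + \cdots + \alpha_r)$: the inequality $\mu^* \leq \lambda$ holds because $a_j \geq 2$ for every $j > p_0$, and dominance is automatic except at $\alpha_{p_0}$, where $\langle \mu, \alpha_{p_0}^\vee \rangle = \langle \lambda, \alpha_{p_0}^\vee \rangle + a_{p_0-1} + a_{p_0+1} \geq 2$ from $a_{p_0+1} \geq 2$. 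If instead $a_{p_0} = 1$, I take $\mu^* = \mu + \alpha_{p_0} + 2\alpha_{p_0+1} + \cdots + 2\alpha_r$, and the only non-obvious dominance check reduces to $\alpha_{p_0-1} \in \Supp(\mu)$.

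The delicate point, and the main obstacle I anticipate, is the dominance verification in this last subcase when $\alpha_{p_0-1} \notin \Supp(\mu)$, which can happen in degenerate configurations where $\mu$'s dominance inequality at $\alpha_{p_0-1}$ is tight. In such situations a finer choice of the long root, of the form $\alpha_i + \cdots + \alpha_{j-1} + 2\alpha_j + \cdots + 2\alpha_r$ with $i < p_0 < j \leq r-1$, becomes necessary, absorbing additional coefficients $a_i$ lying further to the left. Organising this analysis systematically is the combinatorial core of the argument, parallel in spirit to the proof of Lemma \ref{lem II: mu-soddisfa-star}.
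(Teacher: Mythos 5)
Your overall strategy --- induct on $a_r$ in steps of two by producing an intermediate dominant weight $\mu^* \in \Pi^+(\lambda)$ with $\mu^*-\mu$ a long root or twice a short root in $\Phi^+(\lambda)$, and then combine Lemma \ref{lem I: differenza radice positiva} with $\Omega(\lambda,\mu^*)=\Omega(\lambda)$ --- is exactly the paper's strategy, and your Case A ($a_{p_0}=0$, add $2(\alpha_{p_0+1}+\cdots+\alpha_r)$) is sound. But there is a genuine gap, which you flag yourself: in Case B ($a_{p_0}=1$) the weight $\mu^*=\mu+\alpha_{p_0}+2\alpha_{p_0+1}+\cdots+2\alpha_r$ need not be dominant, and you do not supply the ``finer choice of long root'' that you admit is then required. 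The failure is not hypothetical. Take $G=\mathrm{Spin}(9)$, $\lambda=\omega_1+\omega_3$, $\mu=\omega_2$, so that $\lambda-\mu=\alpha_1+\alpha_2+2\alpha_3+2\alpha_4$. Then $p_0=2$, $a_{p_0}=1$, $\alpha_{p_0-1}=\alpha_1\notin\Supp(\mu)$, and $\mu+\alpha_2+2\alpha_3+2\alpha_4=\varepsilon_1+2\varepsilon_2+\varepsilon_3$ is not dominant. Since this is the combinatorial core of the lemma, the argument is incomplete as it stands.

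The paper closes exactly this gap with two devices absent from your proposal. First, it normalizes the pair: by Corollary \ref{cor I: etichette} one may replace $\{\lambda,\mu\}$ by the equivalent simple subset $\{\lambda+\omega_{q(\lambda)},\,\mu+\omega_{q(\lambda)}\}$, which changes nothing but guarantees $\alpha_{q(\lambda)}\in\Supp(\mu)$. Second, it anchors the long root at $q(\lambda)$ rather than at your $p_0$: with $p=\max\{i:a_i=0\}$ it sets $\theta=\sum_{i=p+1}^{q(\lambda)}\alpha_i+2\sum_{i=q(\lambda)+1}^{r}\alpha_i$ (i.e.\ $\varepsilon_{p+1}+\varepsilon_{q(\lambda)+1}$), and the two dominance conditions for $\mu+\theta$, namely $\alpha_p\in\Supp(\mu)$ and $\alpha_{q(\lambda)}\in\Supp(\mu)$, are then both automatic: the first because $a_p=0<a_{p+1}$, the second by the normalization. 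The price is that when $q(\lambda)=r-1$ the new coefficients satisfy $a'_{r-1}\neq a'_r$, so the inductive hypothesis no longer applies; the paper observes that in that case $\alpha_r\in\Supp(\mu')$ and invokes Lemma \ref{lem II: mu-soddisfa-star} instead. To salvage your version you would have to show that one can always pick a long root $\sum_{i}^{j-1}\alpha_l+2\sum_{j}^{r}\alpha_l$ compatible with both $\lambda-\mu\in\mathbb{N}\Delta$ and the dominance of $\mu^*$, which is precisely what the normalization together with the choice $j=q(\lambda)+1$ delivers for free; in the example above it forces you up to $\theta=\alpha_1+\alpha_2+2\alpha_3+2\alpha_4$, i.e.\ $\mu^*=\lambda$.
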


\begin{proof}
By Corollary \ref{cor I: etichette}, the triviality of $\mu$ depends only on the equivalence class of the simple set $\{\lambda, \mu\}$. Hence we may replace the simple subset $\{\lambda, \mu\}$ with the equivalent simple subset $\{\lambda + \omega_{q(\lambda)}, \mu + \omega_{q(\lambda)}\}$, in particular we may assume that $\alpha_{q(\lambda)} \in \Supp(\mu)$.

We proceed by induction on $a_r$. Suppose that $a_r = 0$: then $\Supp_\Delta(\lambda - \mu)$ has all components of type ${\sf A}$ and the claim follows by Corollary \ref{cor I: differenza tipo A}. Suppose now that $a_{r-1} = a_r \geqslant 2$. Since $q(\lambda) < r$ and since $\mu$ is dominant, it must be $a_{q(\lambda)} \geqslant a_{q(\lambda) +1} \geqslant \ldots \geqslant a_{r-1} = a_r \geqslant 2$. Denote $p$ the maximum such that $a_p = 0$, or set $p=0$ if $a_i\neq 0$ for all $i$, and define
\[	\mu' = \mu + \sum_{i = p+1}^{q(\lambda)} \alpha_i + \sum_{i = q(\lambda) +1}^r 2\alpha_i =
			\mu - \varpi_p + \varpi_{p+1} - \varpi_{q(\lambda)} + \varpi_{q(\lambda) +1}.	\]
Notice that $\mu'$ is dominant: while $\alpha_{q(\lambda)} \in \Supp(\mu)$ by the assumption at beginning of the proof, if $p>0$ we have also $\alpha_p \in \Supp(\mu)$ since $\langle \mu, \alpha_p^\vee \rangle \geqslant \langle \lambda, \alpha_p^\vee \rangle + a_{p+1} > 0$. Therefore $\mu'\in \Pi^+(\lambda)$ and by construction we have $\mu < \mu'$. Hence by Lemma \ref{lem I: differenza radice positiva} i) and iii) we get $V(\lambda + \mu)\subset V(\lambda) \otimes  V(\mu')$, which implies $\mu-\lambda \in \Omega(\lambda,\mu')$.

Consider now $\mu'$ and denote $\lambda - \mu' = \sum a'_i \alpha_i$: then either $q(\lambda) = r-1$ and $\alpha_r \in \Supp(\mu')$ or $a'_{r-1} = a'_r = a_r -2$. It follows that $\mu' \in \Pi^+_\mathrm{tr}(\lambda)$, in the first case by Lemma \ref{lem II: mu-soddisfa-star} and in the second case by inductive hypothesis. Therefore by Remark \ref{oss I: caratterizzazione-pesi-banali} we get $\Omega(\lambda,\mu') = \Omega(\lambda)$ and it follows $\mu - \lambda \in \Omega(\lambda)$, i.e. $\mu \in \Pi^+_\mathrm{tr}(\lambda)$.
\end{proof}

\begin{lem} \label{lem II: ar-dispari}
Let $\lambda \in \Lambda^+$ be such that $\alpha_r \not \in \Supp(\lambda)$, let $\mu \in \Pi^+(\lambda)$ and denote $\lambda -\mu = \sum_{i=1}^r a_i \alpha_i$. If $a_r > 2\min\{r- q(\lambda), r-q(\mu)\}$, then $\mu \in \Pi^+_\mathrm{tr}(\lambda)$.
\end{lem}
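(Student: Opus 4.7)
The plan is to mirror the inductive construction of Lemma \ref{lem II: ar-pari}, adapted to the odd parity of $a_r$. The cases $a_{r-1}\neq a_r$ and $a_{r-1}=a_r$ even are already handled by Lemmas \ref{lem II: mu-soddisfa-star} and \ref{lem II: ar-pari} respectively, so I may assume $a_{r-1}=a_r$ is odd. By Corollary \ref{cor I: etichette} I replace $\{\lambda,\mu\}$ by the equivalent simple subset $\{\lambda+\omega_{q(\lambda)},\,\mu+\omega_{q(\lambda)}\}$ to assume $\alpha_{q(\lambda)}\in\Supp(\mu)$. Let $p$ be the maximum index with $a_p=0$ (or $p=0$ if no such index exists), set $q=\max\{q(\lambda),q(\mu)\}$ and $m=\min\{r-q(\lambda),r-q(\mu)\}=r-q$. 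By Remark \ref{oss II: l(theta)} and the hypothesis $a_r>2m\geq 2$ one has $a_i\geq a_r\geq 3$ for every $i\geq q(\lambda)$, while $a_i\geq 1$ for every $i>p$ by definition of $p$.

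Define
\[
\mu' = \mu + \alpha_{p+1} + \alpha_{p+2} + \ldots + \alpha_q + 2(\alpha_{q+1} + \ldots + \alpha_r),
\]
so that $\theta:=\mu'-\mu$ is the single positive long root $e_{p+1}+e_{q+1}$, whose support contains $\alpha_{q(\lambda)}$ (since $p<q(\lambda)\leq q$), hence $\theta\in\Phi_l^+(\lambda)$. A routine verification using the expression of $\theta$ in the basis of fundamental weights shows that $\mu'$ is dominant, using $\mu_p\geq a_{p+1}\geq 1$ when $p>0$, and $\mu_q\geq 1$ (which holds by the initial equivalence when $q=q(\lambda)$ and automatically when $q=q(\mu)>q(\lambda)$); moreover $\mu'\leq\lambda$ and $\mu<\mu'$. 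By Lemma \ref{lem I: differenza radice positiva}(i) one obtains $V(\lambda+\mu)\subset V(\lambda)\otimes V(\mu')$, hence $\mu-\lambda\in\Omega(\lambda,\mu')$.

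It then suffices to prove $\mu'\in\Pi_\mathrm{tr}^+(\lambda)$, since then $\Omega(\lambda,\mu')=\Omega(\lambda)$ by Remark \ref{oss I: caratterizzazione-pesi-banali} and the conclusion follows. If $q=r-1$, then $e_{q+1}=e_r=2\omega_r-\omega_{r-1}$ contributes $+2$ to $\mu'_r$, so $\alpha_r\in\Supp(\mu')$ and Lemma \ref{lem II: mu-soddisfa-star} applies. Otherwise $q\leq r-2$, and I argue by strong induction on $a_r$: writing $\lambda-\mu'=\sum a_i'\alpha_i$ one computes $a_{r-1}'=a_r'=a_r-2$ (still odd), while the $\omega$-decomposition of $\theta$ gives $q(\mu')=q+1$, so that $\min\{r-q(\lambda),r-q(\mu')\}=m-1$ and the hypothesis $a_r>2m$ becomes $a_r'>2(m-1)$. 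Hence $\mu'$ satisfies the hypotheses of the lemma with strictly smaller $a_r$, and the inductive hypothesis yields $\mu'\in\Pi_\mathrm{tr}^+(\lambda)$. The induction is well-founded since in the base case $m=1$ one necessarily has $q=r-1$, which lands in the preceding case. The main technical point is the uniform choice $q=\max\{q(\lambda),q(\mu)\}$, which handles both the subcases $q(\mu)\leq q(\lambda)$ and $q(\mu)>q(\lambda)$ without a separate case analysis.
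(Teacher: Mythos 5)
Your argument is correct and follows essentially the same route as the paper: the same normalization $\{\lambda,\mu\}\mapsto\{\lambda+\omega_{q(\lambda)},\mu+\omega_{q(\lambda)}\}$ (after which your $q=\max\{q(\lambda),q(\mu)\}$ is exactly the paper's $q(\mu)$), the same auxiliary weight $\mu'=\mu-\varpi_p+\varpi_{p+1}-\varpi_{q}+\varpi_{q+1}$, and the same appeal to Lemma \ref{lem I: differenza radice positiva} to get $\mu-\lambda\in\Omega(\lambda,\mu')$. The only cosmetic difference is that you induct on $a_r$ while the paper inducts on $r-q(\mu)$; since each step decreases both quantities, the two inductions are interchangeable.
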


\begin{proof}
By Corollary \ref{cor I: etichette}, the triviality of $\mu$ depends only on the equivalence class of the simple set $\{\lambda, \mu\}$. Hence we may replace the simple subset $\{\lambda, \mu\}$ with the equivalent simple subset $\{\lambda + \omega_{q(\lambda)}, \mu + \omega_{q(\lambda)}\}$, in particular we may assume that $q(\lambda) \leqslant q(\mu)$. Moreover by Lemma \ref{lem II: ar-pari} we may assume that $a_r$ is odd.

We proceed by induction on $r-q(\mu)$, the basis being the case $q(\mu)=r$ (Lemma \ref{lem II: mu-soddisfa-star}). Suppose that $q(\mu) < r$: since $\mu$ is dominant and since $a_r > 2(r-q(\mu))$, it must be
\[	a_{q(\lambda)} \geqslant a_{q(\lambda)+1} \geqslant \ldots \geqslant a_{q(\mu)} = a_{q(\mu)+1} = \ldots = a_r \geqslant 3. \]
Denote $p$ the maximum such that $a_p = 0$, or set $p=0$ otherwise, and define
\[
	\mu' = \mu + \sum_{i=p+1}^{q(\mu)} \alpha_i + \sum_{i=q(\mu)+1}^r 2\alpha_i = \mu - \varpi_p + \varpi_{p+1} - \varpi_{q(\mu)} + \varpi_{q(\mu)+1}.	\]
Notice that $\mu'$ is dominant: while $\alpha_{q(\mu)} \in \Supp(\mu)$ by definition, if $p>0$ we have also $\alpha_p \in \Supp(\mu)$ since $\langle \mu, \alpha_p^\vee \rangle \geqslant \langle \lambda, \alpha_p^\vee \rangle + a_{p+1} > 0$. Therefore $\mu'\in \Pi^+(\lambda)$ and by construction we have $\mu < \mu'$. Hence by Lemma \ref{lem I: differenza radice positiva} i), iii) we get $V(\lambda + \mu) \subset V(\lambda) \otimes V(\mu')$, which implies $\mu-\lambda \in \Omega(\lambda,\mu')$.

Consider now $\mu'$ and denote $\lambda - \mu' = \sum a'_i \alpha_i$. Suppose that $a'_r = 1$. Then $a_r=3$ and we get $q(\mu) = r-1$, since by hypothesis we have $a_r > 2(r-q(\mu))$ and $q(\mu) < r$: therefore $\alpha_r \in \Supp(\mu')$ and by Lemma \ref{lem II: mu-soddisfa-star} it follows $\mu' \in \Pi^+_\mathrm{tr}(\lambda)$. Otherwise we have $q(\mu') = q(\mu)+1 < r$ and $a'_r = a_r -2 > 2(r-q(\mu'))$: therefore $\mu' \in \Pi^+(\lambda)$ still satisfies the hypothesis of the lemma and by the inductive hypothesis we get $\mu' \in \Pi^+_\mathrm{tr}(\lambda)$. Therefore by Remark \ref{oss I: caratterizzazione-pesi-banali} it follows $\Omega(\lambda,\mu') = \Omega(\lambda)$ and we get $\mu - \lambda \in \Omega(\lambda)$, i.e. $\mu \in \Pi^+_\mathrm{tr}(\lambda)$.
\end{proof}

\section{Simple reduced subsets in the odd orthogonal case}

Let $\Pi \subset \Lambda^+$ be a simple subset. In this section we will define the \textit{reduction} of $\Pi$, which is the minimal simple subset $\Pi_{\mathrm{red}} \subset \Pi$ such that $X_\Pi$ and $X_{\Pi_{\mathrm{red}}}$ are equivariantly isomorphic. This subset is canonical, in the sense that if $\lambda \in \Pi$ is the maximal element, then the set of differences $\Pi_\mathrm{red} - \lambda$ depends only on $\Pi - \lambda$. If moreover $\Pi'\subset \Lambda^+$ is another simple subset such that 
\[    \xymatrix{
     \widetilde{X}_\lambda \ar@{->}[d] \ar@{->}[rr] & & X_{\Pi'} \ar@{->}[d]   \\
     X_{\Pi} \ar@{->}[rr] & & X_\lambda	}	\]
the reductions of $\Pi$ and $\Pi'$ will allow to characterize combinatorially the existence of an equivariant morphism $X_\Pi \to X_{\Pi'}$ which makes the diagram commute. In particular, it will follow a combinatorial criterion to establish whether two simple subsets give rise to isomorphic compactifications.

If $\lambda \in \Lambda^+$, denote $\Phi^+(\lambda) \subset \Phi^+$ the set of the postive roots non-orthogonal to $\lambda$ and consider the following degeneration of the dominance order:
\[	\nu \leqslant^\lambda \mu \qquad \text{ if and only if } \qquad \mu - \nu \in \mathbb{N} \Phi^+(\lambda).		\]
Notice that $\leqslant^\lambda$ depends only on $\Supp(\lambda)$ and that it coincides with the usual dominance order if $\lambda$ is a regular weight. The partial order $\leqslant^\lambda$ was studied in the general semisimple case by Gandini and Ruzzi in \cite{GR}, where it is used to characterize the normality of a simple linear compactification of a semisimple group. In particular, there are proved the following properties.

\begin{prop}[see {\cite[Prop. 2.1 and Cor. 2.2]{GR}}] \label{prop III: lambda dominanza in natura}
Let $\lambda \in \Lambda^+$.
\begin{itemize}
\item[i)] If $\mu \in \Pi(\lambda)$, then $\mu \leqslant^\lambda \lambda$.
\item[ii)] Let $\lambda_1, \ldots, \lambda_n \in \Pi^+(\lambda)$. If $\mu, \nu \in \Lambda^+$ are such that $V(\nu) \subset V(\mu) \otimes V(\lambda_1) \otimes \ldots \otimes V(\lambda_n)$, then $\nu \leqslant^\lambda \mu + n \lambda$.
\end{itemize}
\end{prop}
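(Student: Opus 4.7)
The plan is to prove (i) by induction on the height of $\lambda - \mu$, and to derive (ii) from (i) by a second induction on $n$ combined with careful weight arithmetic in the tensor product. For (i), the base case $\mu = \lambda$ is immediate. For the inductive step with $\mu \neq \lambda$, it is enough to exhibit a positive root $\theta \in \Phi^+(\lambda)$ such that $\mu + \theta \in \Pi(\lambda)$, since then the inductive hypothesis applied to $\mu + \theta$ gives $\lambda - (\mu+\theta) \in \mathbb{N}\Phi^+(\lambda)$ and we conclude by adding $\theta$. The key combinatorial input is the vanishing $f_\alpha v_\lambda = 0$ for every simple root $\alpha \in \Delta \setminus \Supp(\lambda)$: the $\mathfrak{sl}(2)$-string of $v_\lambda$ in the $\alpha$-direction has length $\langle \lambda, \alpha^\vee \rangle = 0$. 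Consequently any weight vector of $V(\lambda)_\mu$ is produced from $v_\lambda$ by a sequence of lowerings whose first step must use some $\beta \in \Supp(\lambda)$; grouping consecutive lowering operators into a single positive root with support meeting $\Supp(\lambda)$ supplies the desired $\theta \in \Phi^+(\lambda)$.

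For (ii), I would use the decomposition
\[
\mu + n\lambda - \nu \;=\; \bigl( \mu + \lambda_1 + \cdots + \lambda_n - \nu \bigr) + \sum_{i=1}^n (\lambda - \lambda_i),
\]
and show that both summands lie in $\mathbb{N}\Phi^+(\lambda)$. Part (i) applied to each $\lambda_i \in \Pi^+(\lambda)$ handles the second summand. For the first summand, I would induct on $n$: the case $n=0$ reduces to $\nu = \mu$ by irreducibility of $V(\mu)$. For the inductive step, use associativity of tensor product to pick a dominant $\mu'$ with $V(\mu') \subset V(\mu) \otimes V(\lambda_1) \otimes \cdots \otimes V(\lambda_{n-1})$ and $V(\nu) \subset V(\mu') \otimes V(\lambda_n)$; the inductive hypothesis supplies $\mu' \leqslant^\lambda \mu + (n-1)\lambda$, and it remains to treat the single-step inclusion $V(\nu) \subset V(\mu') \otimes V(\lambda_n)$. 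For this I would pick a weight decomposition $\nu = \mu'' + \mu_n$ with $\mu'' \in \Pi(\mu')$ and $\mu_n \in \Pi(\lambda_n) \subset \Pi(\lambda)$ (the inclusion $\Pi(\lambda_n) \subset \Pi(\lambda)$ being standard for $\lambda_n \in \Pi^+(\lambda)$); then $\lambda - \mu_n \in \mathbb{N}\Phi^+(\lambda)$ by (i), and one chooses the decomposition so that $\mu' - \mu''$ also lies in $\mathbb{N}\Phi^+(\lambda)$, using the $\leqslant^\lambda$-comparison for $\mu'$.

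The main obstacle is the refinement from the usual dominance order to $\leqslant^\lambda$: an individual simple root $\alpha \in \Supp_\Delta(\lambda - \mu)$ need not lie in $\Supp(\lambda)$, and therefore must be absorbed into a longer positive root whose support meets $\Supp(\lambda)$. Ensuring that such an absorption is always available is the combinatorial heart of (i), and ultimately reflects the cyclicity of $V(\lambda)$ as a $U(\mathfrak{n}^-)$-module generated from $v_\lambda$ via the lowering operators indexed by $\Supp(\lambda)$. A parallel difficulty arises in (ii) because, even though $\lambda_i \in \Pi^+(\lambda)$, the support $\Supp(\lambda_i)$ may sit outside $\Supp(\lambda)$ and hence $\Phi^+(\lambda_i)$ is in general not contained in $\Phi^+(\lambda)$; this forces the careful choice of the weight decomposition of $\nu$ in the tensor product, so that pieces are written directly as non-negative combinations of roots in $\Phi^+(\lambda)$ rather than in $\Phi^+(\lambda_i)$.
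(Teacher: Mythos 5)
The paper gives no internal proof of this proposition: it is quoted from \cite[Prop.~2.1 and Cor.~2.2]{GR}, so the comparison can only be against the argument one would expect there. Your skeleton (reduce (ii) to (i) plus a one--step tensor product statement, and prove (i) by exhibiting, for $\mu\neq\lambda$, a root $\theta\in\Phi^+(\lambda)$ with $\mu+\theta\in\Pi(\lambda)$) is the right one, but the two steps carrying the actual content are not justified, and the key step of (i) would fail as literally described. ``Grouping consecutive lowering operators into a single positive root with support meeting $\Supp(\lambda)$'' is not a legitimate operation: a nonzero monomial $f_{\alpha_{i_1}}\cdots f_{\alpha_{i_k}}v_\lambda$ only guarantees that the \emph{innermost} simple root lies in $\Supp(\lambda)$; the sum of a consecutive block of the $\alpha_{i_j}$ need not be a root at all (a repeated simple root already defeats it), and nothing forces the remaining blocks to meet $\Supp(\lambda)$. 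Likewise $V(\lambda)$ is \emph{not} generated from $v_\lambda$ by the $f_\alpha$ with $\alpha\in\Supp(\lambda)$ alone. The correct mechanism is parabolic: let $\mathfrak l$ be the Levi attached to $\Delta\smallsetminus\Supp(\lambda)$ and $\mathfrak n^-_P$ the span of the root vectors $f_\theta$ with $\theta\in\Phi^+(\lambda)$. Since $\langle\lambda,\alpha^\vee\rangle=0$ for $\alpha\notin\Supp(\lambda)$ one has $(\mathfrak n^-\cap\mathfrak l)\,v_\lambda=0$, and the PBW factorization $U(\mathfrak n^-)=U(\mathfrak n^-_P)\,U(\mathfrak n^-\cap\mathfrak l)$ gives $V(\lambda)=U(\mathfrak n^-_P)\,v_\lambda$, so every weight of $V(\lambda)$ is $\lambda$ minus a sum of elements of $\Phi^+(\lambda)$ --- which proves (i) outright, with no induction. (Equivalently: a weight space killed by all $e_\theta$, $\theta\in\Phi^+(\lambda)$, lies in $V(\lambda)^{\mathfrak n_P}=V_{\mathfrak l}(\lambda)=\Bbbk v_\lambda$, so your inductive step does go through once this is in place.)

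In (ii) the one--step case $V(\nu)\subset V(\mu')\otimes V(\lambda_n)$ is where the difficulty sits, and it is left open. Applying (i) to $\mu''\in\Pi(\mu')$ only yields $\mu'-\mu''\in\mathbb N\Phi^+(\mu')$, and $\Phi^+(\mu')\not\subset\Phi^+(\lambda)$ in general; the phrase ``one chooses the decomposition so that $\mu'-\mu''$ also lies in $\mathbb N\Phi^+(\lambda)$'' is precisely the assertion that needs proof, and the $\leqslant^\lambda$--comparison for $\mu'$ does not supply it. The missing ingredient is the standard fact that every irreducible constituent of $V(\mu')\otimes V(\lambda_n)$ has highest weight of the form $\mu'+\pi$ with $\pi\in\Pi(\lambda_n)$ (inspect the component of a highest weight vector whose $V(\mu')$--factor has maximal weight). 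Granting it, take $\mu''=\mu'$: then $\nu-\mu'=\pi\in\Pi(\lambda_n)\subset\Pi(\lambda)$, part (i) gives $\lambda-\pi\in\mathbb N\Phi^+(\lambda)$, hence $\nu\leqslant^\lambda\mu'+\lambda\leqslant^\lambda\mu+n\lambda$ by the inductive hypothesis; your preliminary decomposition $\mu+n\lambda-\nu=\bigl(\mu+\sum_i\lambda_i-\nu\bigr)+\sum_i(\lambda-\lambda_i)$ then becomes unnecessary. So both halves are repairable, but each currently hinges on an unproved (and, in the form stated, incorrect or circular) combinatorial claim.
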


Suppose that $\Pi \subset \Lambda^+$ is simple with maximal element $\lambda$ and define the \textit{reduction} of $\Pi$ as follows:
\[	\Pi_{\mathrm{red}} \doteq \{ \mu \in \Pi \smallsetminus \Pi^+_{\mathrm{tr}}(\lambda) \, : \,  \mu \text{ is maximal w.r.t. } \leqslant^\lambda \} \cup \{\lambda\}.		\]
If $\Pi = \Pi_{\mathrm{red}}$, then we say that $\Pi$ is \textit{reduced}. For instance, if $\alpha_r \not \in \Supp(\lambda)$ and $\Pi = \Pi^+(\lambda)$, then we have $\Pi_\mathrm{red} = \{\lambda, \lambda^\mathrm{lb}\}$: this is a consequence of Proposition \ref{prop III: lambda dominanza in natura} together with Theorem \ref{teo I: BGMR}.

Let $\lambda \in \Lambda^+$ and denote $\Xi(\lambda) \subset \mathbb{N} \Delta$ the semigroup of the elements $\tau = \sum a_i \alpha_i$ which satisfy the following inequalities:
\begin{itemize}
	\item[\textbf{($\lambda$-C1)}] If $p\geqslant 1$ is the minimum such that $\alpha_p \in \Supp(\lambda)$, then $a_1 \leqslant a_2 \leqslant \ldots \leqslant a_{p}$.
	\item[\textbf{($\lambda$-C2)}] If $s < t$ are such that $\alpha_s, \alpha_t \in \Supp(\lambda)$ and $\alpha_i \not \in \Supp(\lambda)$ for every $s < i < t$, then $\sum_{i=s}^{t-1} |a_{i+1} - a_i| \leqslant a_s + a_t$.
	\item[\textbf{($\lambda$-C3)}] Let $q \leqslant r$ be the maximum such that $\alpha_q \in \Supp(\lambda)$. If $q<r$, then $a_r$ is even and $2 \sum_{i \in I_q} (a_{i+1}-a_i) \leqslant a_r$, where $I_q = \{ i <r \, : \, i\geqslant q \text{ and } a_i < a_{i+1}\}$.
\end{itemize}

\begin{oss}	\label{oss III: X(lambda) e radici}
If $\theta \in \Phi^+_l$ is a long root, then they exist $j,k \in \mathbb N$ with $j < k < r$ such that $\theta = \sum_{i=j+1}^{k} \alpha_i$ or $\theta = \sum_{i=j+1}^{k} \alpha_i + 2 \sum_{i=k+1}^r \alpha_i$, while if $\theta \in \Phi^+_s$ is a short root, then there exists $j < r$ such that $\theta = \sum_{i=j+1}^{r} \alpha_i$. Let now $\lambda \in \Lambda^+$: then by the description above it follows that $\Phi^+_l(\lambda) \cup 2\Phi^+_s(\lambda) \subset \Xi(\lambda)$, while $\Phi^+_s(\lambda) \subset \Xi(\lambda)$ if and only if $\alpha_r \in \Supp(\lambda)$. Notice that we always have $\Phi^+_s(\lambda) + \Phi^+_s(\lambda) \subset \Xi(\lambda)$: indeed $\Phi^+_s(\lambda) + \Phi^+_s(\lambda) \subset \Phi^+_l(\lambda) \cup 2\Phi^+_s(\lambda)$.
\end{oss}

\begin{lem} \label{lem III: X(lambda)}
Let $\lambda \in \Lambda^+$ and $\tau \in \Xi(\lambda)$ be non-zero. There exists $\theta \in \Phi^+(\lambda)$ such that $\tau - \theta \in \Xi(\lambda)$ and $\lambda + \tau^- + \theta \in \Lambda^+$. If moreover $\alpha_r \not\in \Supp(\lambda)$, then it is possible to choose $\theta \in \Phi^+_l(\lambda)$.
\end{lem}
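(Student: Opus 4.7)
The plan is to construct $\theta$ by identifying a suitable ``pulse'' in the coefficient sequence of $\tau = \sum_{i=1}^r a_i\alpha_i$. Writing $\tau = \sum_j n_j \omega_j$ with $n_j = \langle \tau, \alpha_j^\vee \rangle$, conditions (\textbf{$\lambda$-C1})--(\textbf{$\lambda$-C3}) amount to bounds on how the $n_j$'s can be negative: specifically, the total negative ``drop'' in $(a_i)$ between two consecutive support indices $\alpha_s,\alpha_t \in \Supp(\lambda)$ is controlled by $a_s+a_t$, and the drop past the last support index $\alpha_q$ is controlled by $a_r$ (which must be even when $\alpha_r \notin \Supp(\lambda)$). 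Let $p = \min\{i : \alpha_i \in \Supp(\lambda)\}$ and $q = \max\{i : \alpha_i \in \Supp(\lambda)\}$.

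First I would locate a maximal ``peak'' interval $[s+1,t]$ where the sequence $(a_i)$ weakly increases from $a_s<a_{s+1}$ to $a_t$ and then decreases (or reaches the tail). If $t<r$ I would take $\theta = \alpha_{s+1}+\cdots+\alpha_t \in \Phi^+_l$; if $t=r$, I would take $\theta = \alpha_{s+1}+\cdots+\alpha_k+2\alpha_{k+1}+\cdots+2\alpha_r$, choosing $k$ so that the segment matches another pulse or the constant tail dictated by (\textbf{$\lambda$-C3}) (in the second claim, this $k<r$ gives $\theta \in \Phi^+_l$, using that $a_r$ is even). In the case $\alpha_r \in \Supp(\lambda)$ with $t=r$, the short root $\theta = \alpha_{s+1}+\cdots+\alpha_r$ is already allowed. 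Coefficient-wise, $\tau - \theta \geqslant 0$ by construction of the peak.

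The next step is to verify $\theta \in \Phi^+(\lambda)$, i.e.\ $\Supp_\Delta(\theta) \cap \Supp(\lambda) \neq \varnothing$. A peak interval $[s+1,t]$ disjoint from $\Supp(\lambda)$ would sit strictly between two support indices (or strictly before $p$, or strictly after $q$), and in each case the corresponding inequality of (\textbf{$\lambda$-C1}), (\textbf{$\lambda$-C2}) or (\textbf{$\lambda$-C3}) forces either $a_{s}\geqslant a_{s+1}$ (contradiction) or the peak to extend up to a support index. Once that is settled, I would check that $\tau - \theta$ still lies in $\Xi(\lambda)$: subtracting $1$ from $a_{s+1},\ldots,a_t$ (and $2$ from $a_{k+1},\ldots,a_r$ in the tail case) only improves or leaves unchanged the relevant inequalities because $\theta$ exactly absorbs one unit of the chosen pulse; the parity of $a_r$ is preserved by the choice of $\theta$ in the tail case.

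Finally, for $\lambda + \tau^- + \theta \in \Lambda^+$: the weight $\lambda + \tau^-$ is dominant since both summands are, so it is enough to control the indices $j$ where $\langle \theta, \alpha_j^\vee \rangle < 0$, namely $j=s$ and $j=t+1$ (with a short-root correction in the tail case). At such $j$, by maximality of the peak one has either $\alpha_j \in \Supp(\lambda)$, in which case $\langle \lambda,\alpha_j^\vee \rangle\geqslant 1$ compensates the drop, or $n_j<0$ (a ``valley''), so $\langle \tau^-,\alpha_j^\vee\rangle>0$ absorbs the contribution of $\theta$. The main obstacle will be this last step at the tail when $\alpha_r \notin \Supp(\lambda)$: there I must use (\textbf{$\lambda$-C3}) carefully, trading off the evenness of $a_r$ and the oscillation bound $2\sum_{i\in I_q}(a_{i+1}-a_i)\leqslant a_r$ to guarantee both the long choice of $\theta$ and the dominance of $\lambda+\tau^-+\theta$ at $\alpha_{k+1}$ and $\alpha_q$.
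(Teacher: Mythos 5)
Your overall strategy (subtract a positive root supported on a monotone segment of the coefficient sequence, then check that the result stays in $\Xi(\lambda)$ and that dominance of $\lambda+\tau^-+\theta$ only needs to be checked at the two ends of the segment) is the same as the paper's, but the choice of segment is where the argument breaks. You claim that any maximal ``peak'' of $(a_i)$ must meet $\Supp(\lambda)$, because a peak lying strictly between two consecutive support indices would contradict \textbf{($\lambda$-C2)}. That is false: \textbf{($\lambda$-C2)} only bounds the \emph{total} oscillation between consecutive support indices by $a_s+a_t$, and this leaves room for interior peaks. Concretely, take $r=7$, $\Supp(\lambda)=\{\alpha_1,\alpha_6\}$ and
\[ \tau = 2\alpha_1+\alpha_2+2\alpha_3+2\alpha_4+\alpha_5+2\alpha_6. \]
Here \textbf{($\lambda$-C1)} and \textbf{($\lambda$-C3)} are trivially satisfied and \textbf{($\lambda$-C2)} holds with equality ($1+1+0+1+1=4=a_1+a_6$), so $\tau\in\Xi(\lambda)$; yet $[3,4]$ is a maximal peak and the corresponding root $\alpha_3+\alpha_4$ does not lie in $\Phi^+(\lambda)$. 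So your procedure can select a $\theta\notin\Phi^+(\lambda)$, and the step ruling this out has no valid proof. (The lemma is of course still true for this $\tau$ --- e.g. $\theta=\alpha_1$ works --- but your argument does not find such a $\theta$.)

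The repair is exactly what the paper does: anchor the segment at $s_0=\min\{i:\alpha_i\in\Supp(\lambda)\cap\Supp_\Delta(\tau)\}$ rather than at an arbitrary peak. Conditions \textbf{($\lambda$-C1)} and \textbf{($\lambda$-C2)} do force $(a_i)$ to be weakly increasing up to $s_0$ (this is the part of your ``no peak outside the support'' reasoning that is correct), so one starts $\theta$ just after the last vanishing coefficient before $s_0$ and extends it rightwards to the first descent before the next support index, or into the doubled tail; then $\alpha_{s_0}\in\Supp_\Delta(\theta)$ automatically. This anchoring is also what makes the verification of \textbf{($\lambda$-C2)} for $\tau-\theta$ work: one needs that the two endpoints of the segment never both lie inside a single gap between consecutive support indices, which fails for an interior peak (in the example above, subtracting $\alpha_3+\alpha_4$ lowers $a_s+a_t$ by $0$ but could in general lower it less than the oscillation sum drops, and your blanket claim that the inequalities ``only improve'' is not justified). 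Finally, the tail case $t=r$ with $\alpha_r\notin\Supp(\lambda)$, which you flag as the main obstacle but do not carry out, genuinely requires showing $a_i\geqslant 2$ for all $i$ past the chosen $k$ (using \textbf{($\lambda$-C3)} and the evenness of $a_r$) before $\tau-\theta\in\mathbb{N}\Delta$ even makes sense; as written the proposal does not close this case either.
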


\begin{proof}
Denote $p$ the minimum such that $\alpha_p \in \Supp(\lambda)$ and $q$ the maximum such that $\alpha_q \in \Supp(\lambda)$ and let $\tau = \tau^+ - \tau^- = \sum a_i \alpha_i \in \Xi(\lambda)$. Denote $s_0 \geqslant 0$ the minimum such that $\alpha_{s_0} \in \Supp(\lambda) \cap \Supp_\Delta(\tau)$ and notice that $a_i \leqslant a_{i+1}$ for every $i < s_0$. This follows by \textbf{($\lambda$-C1)} if $s_0 = p$, whereas if $s_0 > p$ then \textbf{($\lambda$-C2)} implies $\sum_{i=1}^{s_0-1} |a_{i+1} - a_i| = a_{s_0} = \sum_{i=1}^{s_0-1} (a_{i+1} - a_i)$, hence $a_{i+1} - a_i \geq 0$ for every $i < s_0$. Denote $j < s_0$ the maximum such that $a_{j+1} \neq 0$: then $0 < a_{j+1} \leqslant \ldots \leqslant a_{s_0}$. Notice that, if $j>0$, then $\alpha_j \in \Supp(\tau^-)$: indeed $\langle \tau^-, \alpha_j^\vee \rangle = \langle \tau^+, \alpha_j^\vee \rangle + a_{j+1} >0$. In order to construct the root $\theta$, we distinguish the following cases:
\begin{itemize}
	\item[\textit{Case 1.}] $s_0 < q$;
	\item[\textit{Case 2.}] $s_0 = q = r$;
	\item[\textit{Case 3.}] $s_0 = q$ and $a_r = 0$;
	\item[\textit{Case 4.}] $s_0 = q < r$ and $a_r \neq 0$.
\end{itemize}

\textit{Case 1.} Suppose that $s_0 < q$. Denote $t_0 > s_0$ the minimum such that $\alpha_{t_0} \in \Supp(\lambda)$ and define $k$ as follows:
\[
	k = 
		\left\{ \begin{array}{ll}
			t_0-1 & \text{ if } a_{s_0} \leqslant a_{s_0 + 1} \leqslant \ldots \leqslant a_{t_0} \\
			\max\{ i < t_0 \, : \, a_i > a_{i+1} \} & \text{ otherwise }
		\end{array} \right.
\]
Therefore $j < s_0 \leqslant k < t_0$. Set $\theta = \sum_{i=j+1}^k \alpha_i$ and denote $\tau' = \tau - \theta$: then by construction $\tau' \in \mathbb{N}\Delta$ and $\theta \in \Phi^+_l(\lambda)$, we claim that $\tau' \in \Xi(\lambda)$. Notice that $\tau'$ satisfies \textbf{($\lambda$-C1)} and \textbf{($\lambda$-C3)} since $\tau$ is so, therefore we only need to show that $\tau'$ satisfies \textbf{($\lambda$-C2)}. Denote $\tau' = \sum_{i=1}^r a'_i \alpha_i$ and suppose that $s<t$ are such that $\alpha_s, \alpha_t \in \Supp(\lambda)$ and $\alpha_i \not \in \Supp(\lambda)$ for every $s < i < t$. Since otherwise $a'_i = a_i$ for every $i$ with $s \leqslant i \leqslant t$, we may assume that either $s\leqslant j < t$ or $s\leqslant k<t$: then $a'_s + a'_t = a_s + a_t -1$ and
\[	|a'_{i+1} - a'_i| =	\left\{ \begin{array}{ll}
				|a_{i+1} - a_i| & \text{ if } i \in \{s, \ldots, t-1 \} \smallsetminus \{j,k\} \\
				|a_{i+1} - a_i| - 1 & \text{ if } i = j \text{ or } i = k 	\end{array} \right.	\]
By construction, $\{s, \ldots, t-1 \}$ cannot contain both $j$ and $k$. Therefore we get
\[	\sum_{i=s}^{t-1} |a'_{i+1} - a'_i| = \sum_{i=s}^{t-1} |a_{i+1}-a_i| -1 \leqslant a_s + a_t -1 = a'_s + a'_t	\]
and $\tau'$ satisfies \textbf{($\lambda$-C2)}.

To show that that $\lambda + \tau^- + \theta \in \Lambda^+$, notice that $\lambda + \tau^- + \theta = \lambda + \tau^- - \varpi_j + \varpi_{j+1} + \varpi_k - \varpi_{k+1}$. Suppose that $\alpha_{k+1} \not \in \Supp(\lambda)$: then by the definition of $k$ we have $a_k > a_{k+1} \leqslant a_{k+2}$ and it follows $\alpha_{k+1}\in \Supp(\tau^-)$ since
\[	\langle \tau^-, \alpha_{k+1}^\vee \rangle = \langle \tau^+, \alpha_{k+1}^\vee \rangle + a_k - 2 a_{k+1} + a_{k+2} > 0.	\]

\textit{Case 2.} Suppose that $s_0 = q = r$, denote $\theta = \sum_{i=j+1}^r \alpha_i$ and set $\tau' = \tau - \theta$. Then $\theta \in \Phi^+_s(\lambda)$ and by the definition of $j$ we have $\Supp_\Delta(\tau) = \{\alpha_{j+1}, \ldots, \alpha_r\}$, $\Supp(\lambda) \cap \Supp_\Delta(\tau) = \{\alpha_r\}$ and $0 < a_{j+1} \leqslant \ldots \leqslant a_r$. If moreover $\tau' = \sum a'_i \alpha_i$, then we still have $\Supp_\Delta(\tau') \subset \{\alpha_{j+1}, \ldots, \alpha_r\}$, $\Supp(\lambda) \cap \Supp_\Delta(\tau) = \{\alpha_r\}$ and $0 \leqslant a'_{j+1} \leqslant \ldots \leqslant a'_r$: therefore $\tau' \in \Xi(\lambda)$ and $\tau^- + \theta = \tau^- - \varpi_j + \varpi_{j+1} \in \Lambda^+$.

\textit{Case 3.} Suppose that $s_0 = q$ and that $a_r = 0$. Since $a_{s_0} >0$, it must be $q < r$, hence \textbf{($\lambda$-C3)} implies $a_q \geqslant a_{q+1} \geqslant \ldots \geqslant a_r = 0$. Denote $k \geqslant q$ the maximum such that $a_k > 0$: then by the definition of $j$ we get $\Supp_\Delta(\tau) = \{\alpha_{j+1}, \ldots, \alpha_k\}$, $\Supp(\lambda) \cap \Supp_\Delta(\tau) = \{\alpha_q\}$ and
\[	0 < a_{j+1} \leqslant \ldots \leqslant a_q \geqslant \ldots \geqslant a_k > 0.	\]
Set $\theta = \sum_{i=j+1}^k \alpha_i$ and $\tau' = \tau - \theta$: then by construction $\theta  \in \Phi^+_l(\lambda)$ and $\tau' \in \mathbb{N}\Delta$. If moreover $\tau' = \sum a'_i \alpha_i$, then we still have $\Supp_\Delta(\tau') \subset \{\alpha_{j+1}, \ldots, \alpha_k\}$, $\Supp(\lambda) \cap \Supp_\Delta(\tau') = \{\alpha_q\}$ and $0 \leqslant a'_{j+1} \leqslant \ldots \leqslant a'_q \geqslant \ldots \geqslant a'_k \geqslant 0$: therefore $\tau' \in \Xi(\lambda)$. Consider now $\tau^- + \theta = \tau^- - \varpi_j + \varpi_{j+1} + \varpi_k - \varpi_{k+1}$. Then we have $\langle \tau^-, \alpha_{k+1}^\vee \rangle = \langle \tau^+, \alpha_{k+1}^\vee \rangle + a_k > 0$, therefore $\tau^- + \theta \in \Lambda^+$.

\textit{Case 4.} Suppose that $s_0 = q < r$ and that $a_r \neq 0$. If $a_i = 0$ for some $i > q$, then \textbf{($\lambda$-C3)} implies that $a_r = 0$, hence it must be $a_i \neq 0$ for every $q \leqslant i \leqslant r$. Therefore we have $\Supp_\Delta(\tau) = \{\alpha_{j+1}, \ldots, \alpha_r\}$ and $\Supp(\lambda) \cap \Supp_\Delta(\tau) = \{\alpha_q\}$. Denote $I_q = \{i < r \, : \, i\geqslant q \text{ and } a_i < a_{i+1} \}$ and define $k$ as follows:
\[	k = \left\{ \begin{array}{ll}
			q & \text{ if } I_q = \varnothing \\
			\min\{ i \geqslant q \, : \, a_i < a_{i+1} \} & \text{ if } I_q \neq \varnothing \end{array} \right.	\]
Therefore we have $j < q \leqslant k < r$. Notice that $a_i \geqslant 2$ for every $k < i \leqslant r$: indeed $a_r \geqslant 2$ by the definition of $\Xi(\lambda)$, whereas if $I_q \neq \varnothing$ and $a_i = 1$ for some $i > q$ then \textbf{($\lambda$-C3)} implies $a_r = 2$ and $I_q = \{k\}$. Therefore, if we set $\theta = \sum_{i=j+1}^k \alpha_i + 2\sum_{i=k+1}^r \alpha_i$ and $\tau' = \tau - \theta$, then we have $\theta \in \Phi^+_l(\lambda)$ and $\tau' \in \mathbb{N}\Delta$. We claim that $\tau' \in \Xi(\lambda)$. Since $a_{j+1} \leqslant \ldots \leqslant a_q$, we have that $\tau'$ satisfies \textbf{($\lambda$-C1)} and \textbf{($\lambda$-C2)} as a direct consequence of the fact that these conditions are satisfied by $\tau$. To show that $\tau'$ satisfies \textbf{($\lambda$-C3)}, denote $\tau' = \sum_{i=1}^r a'_i \alpha_i$ and set $I'_q = \{i\geqslant q \, : \, a'_i < a'_{i+1} \}$. If $i \geqslant q$, notice that we have $a'_{i+1} - a'_i = a_{i+1} - a_i$ unless $i = k$, in which case $a'_{k+1} - a'_k = a_{k+1} - a_k -1$. Hence we get that
\[	2 \sum_{i \in I'_q} (a'_{i+1} - a'_i) = 2 \sum_{i \in I_q} (a_{i+1} - a_i) - 2 \leqslant a_r - 2 = a'_r,	\]
therefore $\tau' \in \Xi(\lambda)$. To show that $\lambda + \tau^- + \theta \in \Lambda^+$, since $a_r \neq 0$, notice that we have $\tau^- + \theta = \tau^- - \varpi_j + \varpi_{j+1} - \varpi_k + \varpi_{k+1}$. If $k \neq q$, then by its definition we have $a_{k-1} \geqslant a_k < a_{k+1}$. Therefore if $\alpha_k \not \in \Supp(\lambda)$ it follows that
\[	\langle \tau^-, \alpha_k^\vee \rangle = \langle \tau^+, \alpha_k^\vee \rangle + a_{k-1} + a_{k+1} - 2 a_k > 0: \]
hence $\alpha_k \in \Supp(\tau^-)$ and $\lambda + \tau^- + \theta \in \Lambda^+$.
\end{proof}

\begin{cor} \label{cor III: X(lambda)}
Let $\lambda \in \Lambda^+$.
\begin{itemize}
	\item[i)] If $\alpha_r \in \Supp(\lambda)$, then $\Xi(\lambda) = \mathbb{N} \Phi^+(\lambda)$.
	\item[ii)] If $\alpha_r \not \in \Supp(\lambda)$, then $\Xi(\lambda) = \mathbb{N} \Phi^+_l(\lambda)$.
\end{itemize}
\end{cor}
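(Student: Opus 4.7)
The plan is to deduce both parts from Lemma \ref{lem III: X(lambda)} together with Remark \ref{oss III: X(lambda) e radici}, proving the two inclusions separately in each case.

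For the forward inclusions, I would first verify $\mathbb{N}\Phi^+(\lambda) \subset \Xi(\lambda)$ in case i) and $\mathbb{N}\Phi^+_l(\lambda) \subset \Xi(\lambda)$ in case ii). Both follow directly from Remark \ref{oss III: X(lambda) e radici}: one always has $\Phi^+_l(\lambda) \subset \Xi(\lambda)$, while $\Phi^+_s(\lambda) \subset \Xi(\lambda)$ exactly when $\alpha_r \in \Supp(\lambda)$. Since $\Xi(\lambda)$ is a semigroup by its definition, in case i) it contains the semigroup generated by $\Phi^+_l(\lambda) \cup \Phi^+_s(\lambda) = \Phi^+(\lambda)$, and in case ii) it contains the semigroup generated by $\Phi^+_l(\lambda)$.

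For the reverse inclusions, I would argue by induction on the height $\mathrm{ht}(\tau) = \sum_i a_i$ of $\tau = \sum a_i \alpha_i \in \Xi(\lambda)$. The case $\tau = 0$ is trivial. For the inductive step, apply Lemma \ref{lem III: X(lambda)} to produce $\theta \in \Phi^+(\lambda)$ such that $\tau - \theta \in \Xi(\lambda)$, with the added property $\theta \in \Phi^+_l(\lambda)$ when $\alpha_r \not\in \Supp(\lambda)$. Since $\theta$ is a positive root and $\tau - \theta \in \mathbb{N}\Delta$, we have $\mathrm{ht}(\tau - \theta) < \mathrm{ht}(\tau)$, so by induction $\tau - \theta$ lies in $\mathbb{N}\Phi^+(\lambda)$ in case i) and in $\mathbb{N}\Phi^+_l(\lambda)$ in case ii). Adding $\theta$ gives the required expression for $\tau$.

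No substantive obstacle remains: all the combinatorial content has been absorbed into Lemma \ref{lem III: X(lambda)} (which produces the root $\theta$ of the correct type while preserving membership in $\Xi(\lambda)$) and into Remark \ref{oss III: X(lambda) e radici} (which identifies which single roots already sit inside $\Xi(\lambda)$). The corollary is thus essentially a bookkeeping consequence, consisting of a one-line semigroup argument for the easy inclusion and a one-line induction on height for the hard one.
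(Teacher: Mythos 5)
Your proof is correct and follows essentially the same route as the paper: the easy inclusion comes from Remark \ref{oss III: X(lambda) e radici} plus the fact that $\Xi(\lambda)$ is a semigroup, and the hard inclusion is an induction on $\sum_i a_i$ using the root $\theta\in\Phi^+(\lambda)$ (long when $\alpha_r\notin\Supp(\lambda)$) produced by Lemma \ref{lem III: X(lambda)}. No discrepancies worth noting.
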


\begin{proof}
By Remark \ref{oss III: X(lambda) e radici} we have $\Phi^+_l(\lambda) \subset \Xi(\lambda)$, whereas $\Phi^+_s(\lambda) \subset \Xi(\lambda)$ if and only if $\alpha_r \in \Supp(\lambda)$. Let $\tau \in \Xi(\lambda)$ and denote $\tau = \sum_{i=1}^r a_i \alpha_i$. If $\tau \neq 0$, then by Lemma \ref{lem III: X(lambda)} there exists $\tau' \in \Xi(\lambda)$ with $\tau - \tau' \in \Phi^+(\lambda)$, and if $\alpha_r \not \in \Supp(\lambda)$ we may assume $\tau - \tau' \in \Phi^+_l(\lambda)$. Therefore the claim follows proceeding by induction on $\sum_{i=1}^r a_i$.
\end{proof}

\begin{teo} \label{teo III: lambda-confr -> morf}
Let $\mu, \nu \in \Pi^+(\lambda) \smallsetminus \Pi^+_\mathrm{tr}(\lambda)$. Then $\Omega(\lambda, \nu) \subset \Omega(\lambda,\mu)$ if and only if $\nu \leqslant^\lambda \mu$.
\end{teo}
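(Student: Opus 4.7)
The forward direction follows from Proposition \ref{prop III: lambda dominanza in natura}(ii). Since $\nu \notin \Pi^+_{\mathrm{tr}}(\lambda)$, Remark \ref{oss I: caratterizzazione-pesi-banali}(i) gives $\nu - \lambda \in \Omega(\lambda,\nu) \setminus \Omega(\lambda)$, so the hypothesis places $\nu - \lambda$ in $\Omega(\lambda,\mu)$. Expanding $(V(\lambda) \oplus V(\mu))^{\otimes n}$ we obtain $n \geq 1$ and some $k$ with $V(\nu + (n-1)\lambda) \subset V(\mu)^{\otimes k} \otimes V(\lambda)^{\otimes(n-k)}$, and here $k \geq 1$ (the case $k=0$ would force $\nu \in \Pi^+_{\mathrm{tr}}(\lambda)$, contradicting the hypothesis). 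Applying Proposition \ref{prop III: lambda dominanza in natura}(ii) with $\mu$ as the distinguished factor and the other $n-1$ factors (the further $k-1$ copies of $\mu$ and the $n-k$ copies of $\lambda$) all in $\Pi^+(\lambda)$ yields $\nu + (n-1)\lambda \leqslant^\lambda \mu + (n-1)\lambda$, hence $\nu \leqslant^\lambda \mu$.

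For the reverse direction, suppose $\nu \leqslant^\lambda \mu$ and set $\tau = \mu - \nu \in \mathbb{N}\Phi^+(\lambda)$. Nontriviality of $\mu$ and $\nu$ forces $\alpha_r \notin \Supp(\lambda)$ by Corollary \ref{cor I: pesi banali caso star}. By Corollary \ref{cor I: criterio semigruppi} it suffices to produce $N \geq 0$ with $V(\nu + N\lambda) \subset V(\mu) \otimes V(\lambda)^{\otimes N}$. The plan is to build a chain of dominant weights $\mu_0 = \mu + K\lambda,\ \mu_1,\ \ldots,\ \mu_s = \nu + K\lambda$ (with $K$ a large auxiliary integer chosen to keep intermediates dominant along directions in $\Supp(\lambda)$) such that each difference $\mu_i - \mu_{i+1}$ lies in one of the families handled by Lemma \ref{lem I: differenza radice positiva}. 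That lemma then supplies $V(\mu_{i+1} + \lambda) \subset V(\mu_i) \otimes V(\lambda)$ at every step, and iterating yields $V(\mu_s + s\lambda) \subset V(\mu_0) \otimes V(\lambda)^{\otimes s} \subset V(\mu) \otimes V(\lambda)^{\otimes(K+s)}$, i.e., the required inclusion with $N = K+s$.

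The chain is constructed by splitting on the parity of the coefficient $a_r$ of $\alpha_r$ in $\tau$. When $a_r$ is even, short-root summands in any decomposition of $\tau$ over $\Phi^+(\lambda)$ can be paired, and a direct check shows that the sum of any two elements of $\Phi^+_s(\lambda)$ lies in $\mathbb{N}\Phi^+_l(\lambda)$ when $\alpha_r \notin \Supp(\lambda)$; hence $\tau \in \mathbb{N}\Phi^+_l(\lambda) = \Xi(\lambda)$ by Corollary \ref{cor III: X(lambda)}(ii). Iterating Lemma \ref{lem III: X(lambda)} then peels off long roots $\theta_1,\ldots,\theta_s \in \Phi^+_l(\lambda)$ from $\tau$ one at a time; the condition $\lambda + \tau^- + \theta_j \in \Lambda^+$ of that lemma is precisely the mechanism needed to control dominance of each $\mu_i$ (in combination with the $K\lambda$-padding), and Lemma \ref{lem I: differenza radice positiva}(i) then furnishes the tensor inclusion at each step. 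When $a_r$ is odd, a preliminary step peels off a single short root $\theta^{(s)} \in \Phi^+_s(\lambda)$ via Lemma \ref{lem I: differenza radice positiva}(ii), after which $\tau - \theta^{(s)}$ has even $\alpha_r$-coefficient and the previous case applies.

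The main obstacle is precisely the odd case: Lemma \ref{lem I: differenza radice positiva}(ii) requires $\alpha_r$ to lie in the support of the shifted intermediate weight, and this cannot be secured by $\lambda$-padding since $\alpha_r \notin \Supp(\lambda)$. The short root $\theta^{(s)}$ to be extracted first must therefore be chosen so that $\alpha_r$ remains in the support of the resulting intermediate, which requires a case analysis based on the supports of $\mu$ and $\nu$ and on which short roots are available in $\Phi^+_s(\lambda)$.
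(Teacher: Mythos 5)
Your forward direction is correct and is exactly the paper's argument. In the reverse direction, the even case of your parity split also reproduces the paper's proof: reduce $\mu-\nu$ to $\mathbb{N}\Phi^+_l(\lambda)=\Xi(\lambda)$ via Remark \ref{oss III: X(lambda) e radici} and Corollary \ref{cor III: X(lambda)}, peel off long roots with Lemma \ref{lem III: X(lambda)} (whose dominance condition $\lambda+\tau^-+\theta\in\Lambda^+$, combined with the fact that $\nu-\tau^-$ is dominant, keeps the intermediate weights dominant), and apply Lemma \ref{lem I: differenza radice positiva}(i) at each step; the paper organizes the iteration as an induction on $\mu-\nu$, replacing $\{\lambda,\mu\}$ and $\{\lambda,\nu\}$ by the equivalent pairs $\{2\lambda,\lambda+\mu\}$ and $\{2\lambda,\lambda+\nu+\theta\}$, which is your ``$K\lambda$-padding'' carried out one $\lambda$ at a time.

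The gap is the odd case, which you correctly identify as the hard part and then leave unproved. That case is in fact vacuous, and the observation making it vacuous is the one missing idea: since $\mu$ and $\nu$ both lie in $\Pi^+(\lambda)\smallsetminus\Pi^+_\mathrm{tr}(\lambda)$, Theorem \ref{teo II: classificazione pesi banali B} forces the $\alpha_r$-coefficients of both $\lambda-\mu$ and $\lambda-\nu$ to be odd (non-triviality of a weight means precisely that this coefficient is odd and not large), hence the $\alpha_r$-coefficient of $\mu-\nu$ is always even. This is the first thing the paper's proof of the converse records, and it is why the argument never needs Lemma \ref{lem I: differenza radice positiva}(ii) or any short-root extraction. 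Without this remark your proof is incomplete --- you would indeed be stuck trying to keep $\alpha_r$ in the support of an intermediate weight when $\alpha_r\notin\Supp(\lambda)$ --- so you must add it; with it, your even-case argument is the whole proof.
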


\begin{proof}
Since otherwise $\Pi^+_\mathrm{tr}(\lambda) = \Pi^+(\lambda)$, it must be $\alpha_r \not \in \Supp(\lambda)$. Suppose that $\Omega(\lambda, \nu) \subset \Omega(\lambda,\mu)$. In particular we have $\nu - \lambda \in \Omega(\lambda,\mu)$, so it exists $k \leqslant n$ such that $V((n-1)\lambda + \nu) \subset V(\lambda)^{\otimes k} \otimes V(\mu)^{\otimes n-k}$. On the other hand $\nu \not \in \Pi^+_\mathrm{tr}(\lambda)$ so by Remark \ref{oss I: caratterizzazione-pesi-banali} i) it must be $k < n$ and by Proposition \ref{prop III: lambda dominanza in natura} ii) it follows $\nu \leqslant ^\lambda \mu$.

Suppose conversely that $\nu \leqslant^\lambda\mu$, we will show that $\Omega(\lambda,\nu) \subset \Omega(\lambda,\mu)$ proceeding by induction on the difference $\mu - \nu$. Denote $\lambda - \mu = \sum_{i=1}^r m_i \alpha_i$, $\lambda - \nu = \sum_{i=1}^r n_i \alpha_i$ and set $a_i = m_i - n_i$. By Theorem \ref{teo II: classificazione pesi banali B} $m_r$ and $n_r$ are both odd integers, so that $a_r$ is even. Denote $\mu - \nu = \sum a_i \alpha_i = \theta_1 + \ldots + \theta_n$ with $\theta_i \in \Phi^+(\lambda)$ and let $k$ be the number of short roots which occur in the set $\{\theta_1, \ldots, \theta_n\}$. Denote $\theta_i = \sum b^i_j \alpha_j$ and notice that $\theta_i$ is short if and only if $b^i_r$ is odd. Since $a_r$ is even, $k$ is even as well: by Remark \ref{oss III: X(lambda) e radici} together with Corollary \ref{cor III: X(lambda)} it follows then $\mu - \nu \in \mathbb{N} \Phi^+_l(\lambda) = \Xi(\lambda)$. Hence by Lemma \ref{lem III: X(lambda)} together with Corollary \ref{cor III: X(lambda)} there exists $\theta \in \Phi^+_l(\lambda)$ such that $\mu - \nu - \theta \in \mathbb{N} \Phi^+_l(\lambda)$ and $\lambda + \nu + \theta \in \Lambda^+$.

Denote $\mu' = \lambda + \mu$, $\nu' = \lambda + \nu$ and $\pi = \lambda + \nu + \theta$. Then $\{\lambda, \mu\} \sim \{2\lambda, \mu'\}$ and $\{\lambda, \nu\} \sim \{2\lambda, \nu'\}$ are equivalent simple subsets, so that by Corollary \ref{cor I: etichette} i) it follows $\Omega(2\lambda, \mu') = \Omega(\lambda, \mu)$ and $\Omega(\lambda,\nu) = \Omega(2\lambda, \nu')$. Moreover by Lemma \ref{lem I: differenza radice positiva} i) it follows that $V(2\lambda + \nu') \subset V(2\lambda) \otimes V(\pi)$, hence $\nu' - 2\lambda \in \Omega(2\lambda, \pi)$ and by Corollary \ref{cor I: criterio semigruppi} we get $\Omega(2\lambda, \nu') \subset \Omega(2\lambda,\pi)$. Consider now the weights $\mu', \pi \in \Pi^+(2\lambda)$: then we have $\pi \leqslant^\lambda \mu'$ and $\mu' - \pi < \mu - \nu$, hence by the inductive hypothesis it follows $\Omega(2\lambda, \pi) \subset \Omega(2\lambda, \mu')$.
Therefore we get
\[	\Omega(\lambda,\nu) = \Omega(2\lambda, \nu') \subset \Omega(2\lambda,\pi) \subset \Omega(2\lambda, \mu') = \Omega(\lambda, \mu). 	\qedhere	\]
\end{proof}

\begin{cor}\label{cor III: morfismi gen}
Let $\Pi, \Pi'\subset \Lambda^+$ be simple with maximal elements resp. $\lambda, \lambda'$ and assume that $\Supp(\lambda) = \Supp(\lambda')$.
\begin{itemize}
	\item[i)] There exists an equivariant morphism $X_\Pi \to X_{\Pi'}$ if and only if for every $\mu' \in \Pi'_\mathrm{red}$ there exists $\mu \in \Pi_\mathrm{red}$ such that $\mu' - \lambda' \leqslant^\lambda \mu - \lambda$.
	\item[ii)] The varieties $X_\Pi$ and $X_{\Pi'}$ are equivariantly isomorphic if and only if $\Pi_\mathrm{red} \sim \Pi'_\mathrm{red}$.
\end{itemize}
\end{cor}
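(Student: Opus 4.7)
The plan is to reduce both statements to Theorem \ref{teo III: lambda-confr -> morf} by way of the key intermediate fact $X_\Pi \simeq X_{\Pi_\mathrm{red}}$, then use the inflation trick $\tilde\mu'\doteq n\lambda+(\mu'-\lambda')$ to relate the two sides sitting over different maximal weights.

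First I would establish $\Omega(\Pi)=\Omega(\Pi_\mathrm{red})$, from which $X_\Pi\simeq X_{\Pi_\mathrm{red}}$ via Corollary \ref{cor I: criterio semigruppi}. The inclusion $\Omega(\Pi_\mathrm{red})\subset\Omega(\Pi)$ is trivial; for the reverse, given $\mu\in\Pi$, either $\mu\in\Pi^+_\mathrm{tr}(\lambda)$ (so $\mu-\lambda\in\Omega(\lambda)\subset\Omega(\Pi_\mathrm{red})$ by Remark \ref{oss I: caratterizzazione-pesi-banali}), or there exists $\mu^*\in\Pi_\mathrm{red}\smallsetminus\{\lambda\}$ with $\mu\leqslant^\lambda\mu^*$, and Theorem \ref{teo III: lambda-confr -> morf} yields $\Omega(\lambda,\mu)\subset\Omega(\lambda,\mu^*)\subset\Omega(\Pi_\mathrm{red})$. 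The analogous statement holds for $\Pi'$.

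For part (i), combining the above with Corollary \ref{cor I: criterio semigruppi} reduces $X_\Pi\to X_{\Pi'}$ to the condition $\mu'-\lambda'\in\Omega(\Pi_\mathrm{red})$ for every $\mu'\in\Pi'_\mathrm{red}$. The case $\mu'=\lambda'$ is automatic with $\mu=\lambda$; for $\mu'\neq\lambda'$, choose $n$ large enough so that $\tilde\mu'\doteq n\lambda+(\mu'-\lambda')\in\Lambda^+$. Since $\Supp(\lambda)=\Supp(\lambda')$, the subsets $\{\lambda',\mu'\}$ and $\{n\lambda,\tilde\mu'\}$ are equivalent, so by Corollary \ref{cor I: etichette}(i) they have the same semigroup and $\tilde\mu'\in\Pi^+(n\lambda)\smallsetminus\Pi^+_\mathrm{tr}(n\lambda)$. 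For the implication $\Rightarrow$, the containment $\mu'-\lambda'\in\Omega(\Pi_\mathrm{red})$ gives, via Proposition \ref{prop I: criterio tensoriale}, an inclusion $V(\tilde\mu')\subset V(\mu_1)\otimes\cdots\otimes V(\mu_n)$ with $\mu_i\in\Pi_\mathrm{red}$, and since $\mu'\notin\Pi^+_\mathrm{tr}(\lambda')$ some $\mu_j\neq\lambda$; isolating this factor in Proposition \ref{prop III: lambda dominanza in natura}(ii) gives $\tilde\mu'\leqslant^\lambda \mu_j+(n-1)\lambda$, i.e.\ $\mu'-\lambda'\leqslant^\lambda\mu_j-\lambda$. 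Conversely, given $\mu\in\Pi_\mathrm{red}\smallsetminus\{\lambda\}$ with $\mu'-\lambda'\leqslant^\lambda\mu-\lambda$, set $\tilde\mu\doteq(n-1)\lambda+\mu\in\Pi^+(n\lambda)\smallsetminus\Pi^+_\mathrm{tr}(n\lambda)$; then $\tilde\mu'\leqslant^\lambda\tilde\mu$, and applying Theorem \ref{teo III: lambda-confr -> morf} with $n\lambda$ in place of $\lambda$ (legitimate since the trivial-weight condition depends only on the support) yields $\Omega(n\lambda,\tilde\mu')\subset\Omega(n\lambda,\tilde\mu)=\Omega(\lambda,\mu)\subset\Omega(\Pi_\mathrm{red})$, so $\mu'-\lambda'\in\Omega(\Pi_\mathrm{red})$.

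Part (ii) then follows formally. The implication $\Pi_\mathrm{red}\sim\Pi'_\mathrm{red}\Rightarrow X_\Pi\simeq X_{\Pi'}$ is the chain $X_\Pi\simeq X_{\Pi_\mathrm{red}}\simeq X_{\Pi'_\mathrm{red}}\simeq X_{\Pi'}$ using the reduction above and Corollary \ref{cor I: etichette}(i). For the converse, $X_\Pi\simeq X_{\Pi'}$ provides morphisms in both directions; applying (i) twice gives, for each $\mu\in\Pi_\mathrm{red}\smallsetminus\{\lambda\}$, elements $\mu'\in\Pi'_\mathrm{red}\smallsetminus\{\lambda'\}$ and $\mu''\in\Pi_\mathrm{red}\smallsetminus\{\lambda\}$ with $\mu-\lambda\leqslant^\lambda\mu'-\lambda'\leqslant^\lambda\mu''-\lambda$, so $\mu\leqslant^\lambda\mu''$. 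Since $\mu$ is $\leqslant^\lambda$-maximal in $\Pi\smallsetminus\Pi^+_\mathrm{tr}(\lambda)$ by definition of $\Pi_\mathrm{red}$, necessarily $\mu=\mu''$ and $\mu-\lambda=\mu'-\lambda'$, giving $\Pi_\mathrm{red}-\lambda\subset\Pi'_\mathrm{red}-\lambda'$; the reverse inclusion is symmetric.

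The main technical point will be managing the dichotomy between trivial and non-trivial weights: Theorem \ref{teo III: lambda-confr -> morf} only applies to elements of $\Pi^+(\lambda)\smallsetminus\Pi^+_\mathrm{tr}(\lambda)$, and the condition $\mu'-\lambda'\leqslant^\lambda 0$ does not in itself force $\mu'\in\Pi^+_\mathrm{tr}(\lambda')$, so one must carefully argue that the witness $\mu$ in the combinatorial condition may indeed be taken in $\Pi_\mathrm{red}\smallsetminus\{\lambda\}$ whenever $\mu'\neq\lambda'$; the inflation to $\{n\lambda,\tilde\mu'\}$ is what allows both sides to be simultaneously non-trivial after replacing by equivalent simple subsets over the common maximal weight $n\lambda$.
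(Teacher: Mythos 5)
Your proposal is correct and follows essentially the same route as the paper: both directions of (i) rest on Theorem \ref{teo III: lambda-confr -> morf}, Proposition \ref{prop III: lambda dominanza in natura}(ii) and the semigroup criterion of Corollary \ref{cor I: criterio semigruppi}, with the passage to equivalent simple subsets over a common maximal weight (your $n\lambda$ versus the paper's $\lambda+\lambda'$) playing the identical role, and part (ii) is the same maximality argument. Your explicit isolation of $\Omega(\Pi)=\Omega(\Pi_{\mathrm{red}})$ as a preliminary step, and your remark that the witness $\mu$ must be taken non-trivial when $\mu'\neq\lambda'$ (which is exactly what the paper's proof silently assumes when it invokes ``since $\mu$ and $\mu'$ are non-trivial''), are only presentational refinements of the same argument.
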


\begin{proof}
i) Suppose that $X_\Pi$ dominates $X_{\Pi'}$ and let $\mu' \in \Pi'_\mathrm{red} \smallsetminus \{\lambda'\}$. By Corollary \ref{cor I: etichette} i) we have $\Omega(\lambda) = \Omega(\lambda')$, while by Proposition \ref{prop I: criterio tensoriale} they exist $\mu_1, \ldots, \mu_n \in \Pi$ such that $V(\mu' - \lambda' + n\lambda) \subset V(\mu_1) \otimes \ldots \otimes V(\mu_n)$. Since $\mu' \in \Pi' \smallsetminus \Pi^+_{\mathrm{tr}}(\lambda')$, we have $\mu' - \lambda' \not \in \Omega(\lambda') = \Omega(\lambda)$, hence some $\mu_i$ is not trivial, say $\mu_1$, and Proposition \ref{prop III: lambda dominanza in natura} ii) implies $\mu' - \lambda' \leqslant^\lambda \mu_1 - \lambda$. Therefore if $\mu \in \Pi_\mathrm{red}$ is any weight such that $\mu_1 \leqslant^\lambda \mu$, we get $\mu' - \lambda \leqslant^\lambda \mu - \lambda$.

Suppose conversely that for every $\mu' \in \Pi'_\mathrm{red}$ there exists $\mu \in \Pi_\mathrm{red}$ such that $\mu' - \lambda' \leqslant^\lambda \mu - \lambda$ and let $\nu' \in \Pi'$. If $\nu' \in \Pi^+_\mathrm{tr}(\lambda')$, then $\nu'-\lambda' \in \Omega(\lambda') = \Omega(\lambda)$ by Proposition \ref{prop I: morfismi X_lambda} together with Remark \ref{oss I: caratterizzazione-pesi-banali}. Suppose that $\nu' \in \Pi' \smallsetminus \Pi^+_\mathrm{tr}(\lambda')$ and let $\mu' \in \Pi'_\mathrm{red}$ be such that $\nu' \leqslant^\lambda \mu'$: then by Theorem \ref{teo III: lambda-confr -> morf} it follows that $\Omega(\lambda',\nu') \subset \Omega(\lambda',\mu')$. Let $\mu \in \Pi_\mathrm{red}$ be such that $\mu' - \lambda' \leqslant^\lambda \mu - \lambda$ and denote $\lambda'' = \lambda + \lambda'$: since $\mu$ and $\mu'$ are non-trivial, it follows that $\lambda + \mu', \lambda' + \mu \in \Pi^+(\lambda'') \smallsetminus \Pi^+_\mathrm{tr}(\lambda'')$. Then $\{\lambda,\mu\} \sim \{\lambda'',\lambda'+\mu\}$ and $\{\lambda', \mu'\} \sim \{\lambda'', \lambda+\mu'\}$ are equivalent simple sets, so by Corollary \ref{cor I: etichette} i) we have $\Omega(\lambda,\mu) = \Omega(\lambda'',\lambda'+\mu)$ and $\Omega(\lambda',\mu') = \Omega(\lambda'',\lambda+\mu')$. Moreover by construction we have $\lambda +\mu' \leqslant^\lambda \lambda' + \mu$, hence $\Omega(\lambda'',\lambda+\mu') \subset \Omega(\lambda'',\lambda'+\mu)$ by Theorem \ref{teo III: lambda-confr -> morf}. Finally by the definition of $\Omega(\Pi)$ we have $\Omega(\lambda,\mu) \subset \Omega(\Pi)$, so it follows
\[	\Omega(\lambda',\nu') \subset \Omega(\lambda',\mu') = \Omega(\lambda'',\lambda+\mu') \subset \Omega(\lambda'',\lambda'+\mu) = \Omega(\lambda,\mu) \subset \Omega(\Pi).	\]
Therefore we have shown that $\nu' - \lambda' \in \Omega(\Pi)$ for every $\nu' \in \Pi'$ and the claim follows by Corollary \ref{cor I: criterio semigruppi}.

ii) If $\Pi_\mathrm{red} \sim \Pi'_\mathrm{red}$ then the claim follows by Corollary \ref{cor I: etichette} i). Assume that $X_\Pi \simeq X_{\Pi'}$ and let $\mu \in \Pi_{\mathrm{red}} \smallsetminus \{\lambda\}$. Then by i) there exists $\mu' \in \Pi'_\mathrm{red}$ such that $\mu - \lambda \leqslant^\lambda \mu' - \lambda'$ and similarly there exists $\mu_1 \in \Pi_\mathrm{red}$ such that $\mu' - \lambda' \leqslant^\lambda \mu_1 - \lambda$. Therefore $\mu + \lambda' \leqslant^\lambda \mu' + \lambda \leqslant^\lambda \mu_1 + \lambda'$ and we get $\mu \leqslant^\lambda \mu_1$. On the other hand by the definition of $\Pi_{\mathrm{red}}$ we have that $\mu, \mu_1 \in \Pi \smallsetminus \Pi^+_{\mathrm{tr}}$ are maximal w.r.t. $\leqslant^\lambda$, hence it follows that $\mu = \mu_1$ and $\mu - \lambda = \mu' - \lambda'$. Therefore for every $\mu \in \Pi_{\mathrm{red}}$ there exists $\mu' \in \Pi'_\mathrm{red}$ such that $\mu - \lambda = \mu' - \lambda'$, and an analogous argument for $\Pi'$ shows that $\Pi_\mathrm{red} - \lambda = \Pi'_\mathrm{red} - \lambda$. On the other hand, since the closed orbits of $X_\Pi$ and $X_{\Pi'}$ are isomorphic, Proposition \ref{prop I: timashev} ii) implies that $\Supp(\lambda) = \Supp(\lambda')$, therefore we get $\Pi_\mathrm{red} \sim \Pi'_\mathrm{red}$.
\end{proof}

\section{Examples: simple linear compactifications of $\mathrm{SO}(7)$ and $\mathrm{SO}(9)$}

If $I \subset \Delta$, set $X_I = X_\lambda$ and $\widetilde{X}_I \to X_I$ the normalization, where $\lambda \in \Lambda^+$ is such that $\Supp(\lambda) = I$: by Proposition \ref{prop I: morfismi X_lambda} these varieties are well defined and they only depend on $I$. Consider the set
\[T(I) = \{\text{simple linear compactifications $\mathrm{SO}(2r+1) \hookrightarrow X$ such that $\widetilde{X}_I \to X \to X_I$}\}. \]
In other words, following the discussion after Proposition \ref{prop I: timashev}, $T(I)$ is the set of the compactifications $X_\Pi$ such that $\Pi \subset \Lambda^+$ is a simple subset whose maximal element has support $I$. We regard $T(I)$ as a partially ordered set, where the order is defined as follows: $X' \leqslant X$ if there exists an equivariant morphism  $X \to X'$. We also denote
\[T(I,2) = \{X \in T(I) \, : \, X \not \simeq X_I \text{ and } X \simeq X_\Pi \text{ for some } \Pi \subset \Lambda^+ \text{ with } \mathrm{card}(\Pi) = 2\}.\]

Denote $\leqslant^I$ the partial order on $\mathbb N\Delta$ defined as follows:
\[
\text{If } \theta_1, \theta_2 \in \mathbb N\Delta \text{, then } \theta_1 \leqslant^I \theta_2 \text{ if and only if } \theta_2 - \theta_1 \in \mathbb{N}\Delta \smallsetminus \mathbb{N} [\Delta \smallsetminus I].
\]
This coincides with the partial order $\leqslant^\lambda$ defined in Section 3, where $\lambda$ is any weight such that $\Supp(\lambda) = I$. Following Remark \ref{oss I: etichette}, Corollary \ref{cor II: descrizione anello delle coordinate B} and Corollary \ref{cor III: morfismi gen}, $\big(T(I,2), \leqslant\big)$ is identified with the partially ordered set $\big( \mathcal T(I,2), \leqslant^I \big)$, where $\mathcal T(I,2) \subset \mathbb{N}\Delta$ is defined as follows:
\[	\mathcal{T}(I,2) = \left\{ \theta = \sum_{i=1}^r a_i \alpha_i  \in \mathbb{N}\Delta \, : \,
				 \Supp(\theta^+) \subset I, a_r \text{ is odd and } a_r < 2\min\{r-l(\theta),r-q(I) \}\right\}
\]
where $q(I) \leqslant r$ is the maximum such that $\alpha_{q(I)} \in I$ and where $l(\theta)\leqslant r$ denotes the minimum such that $a_i = a_r$ for every $i \geqslant l(\theta)$. Following Theorem \ref{teo I: BGMR}, notice that $\mathcal T(I,2)$ possesses a unique maximal element w.r.t. $\leqslant^I$, namely $\theta_I = \sum_{i=q(I)}^r \alpha_i$: this is the element corresponding to the normalization $\widetilde X_I$ and it coincides with the difference $\lambda - \lambda^{\mathrm{lb}}$, where $\lambda \in \Lambda^+$ is any dominant weight with $\Supp(\lambda) = I$ and where $\lambda^{\mathrm{lb}}$ is the little brother of $\lambda$ (see Definition \ref{def: twin}).

Denote $\mathcal P(\mathbb N \Delta)$ the power set of $\mathbb N \Delta$ and extend $\leqslant^I$ to a partial order relation on $\mathcal P(\mathbb N \Delta)$ as follows:
\[
\text{If } A, A' \subset \mathbb N \Delta, \text{ then } A \leqslant^I A' \text{ if and only if } \forall \theta \in A \quad \exists \theta' \in A' \, : \, \theta \leqslant^I \theta'.
\]
Following Corollary \ref{cor III: morfismi gen}, every element of $T(I)$ is identified with a subset of $T(I,2)$ and we may identify $\big( T(I), \leqslant \big)$ with the partially ordered set $\big( \mathcal T(I), \leqslant^I \big)$, where $\mathcal T(I) \subset \mathcal P(\mathcal T(I,2))$ is defined as follows:
\[
	\mathcal T(I) = \{ A \subset \mathcal T(I,2) \, : \, A \text{ contains no comparable elements w.r.t. } \leqslant^I \}.
\]

In the following tables we represent the poset $\big(\mathcal T(I,2), \leqslant^I\big)$ for $\mathrm{SO}(7)$ and for $\mathrm{SO}(9)$. If $\alpha_r \in I$ then by Theorem \ref{teo I: BGMR} it follows that $\mathcal{T}(I,2) = \varnothing$, while if $I = \{\alpha_{r-1}\}$ it follows by Remark \ref{oss II: omega_r-1} that $\mathcal{T}(I,2) = \{ \alpha_{r-1}+\alpha_r	\}$. Therefore we will assume that $\alpha_r \not \in I$ and that $I \neq \{\alpha_{r-1}\}$. We represent an element $\sum a_i \alpha_i \in \mathcal{T}(I,2)$ as the vector $(a_1, \ldots, a_r)$ and we connect two elements $\theta_1, \theta_2 \in \mathcal{T}(I,2)$  with an arrow $\theta_1 \rightarrow \theta_2$ if and only if $\theta_1 \leqslant^I \theta_2$ and $\theta_2 \in \mathcal{T}(I,2)$ is minimal with this property.

\vspace{0.1cm}

{\scriptsize

\begin{table}[h]
\caption{The poset $\big(\mathcal T(I,2), \leqslant^I \big)$ for $G = \mathrm{SO}(7)$, $I = \{\alpha_1, \alpha_2 \}$.}
\begin{center}
\vspace{-0.4cm}
\[
\xymatrix
    {
         (0,1,1) \ar@{->}[r] & (1,1,1) \ar@{->}[r] & (2,1,1) \ar@{->}[r] & (3,1,1) \ar@{->}[r] & (4,1,1) \ar@{->}[r] & \ldots
    }
\]
\end{center}
\end{table}

\begin{table}[h]
\caption{The poset $\big(\mathcal T(I,2), \leqslant^I \big)$ for $G = \mathrm{SO}(7)$, $I = \{\alpha_1 \}$.}
\begin{center}
\vspace{-0.5cm}
\[
\xymatrix@R=3pt@C=1pt{
		& & & & & (3,3,3) \\
        (1,1,1) \ar@{->}[rrr] & & & (2,1,1) \ar@{->}[rrr] \ar@{->}[urr] & & & (3,1,1) \ar@{->}[rrr] & & & (4,1,1) \ar@{->}[rrr] & & & (5,1,1) \ar@{->}[rrr] & & & \ldots
        }
\]        
\end{center}
\end{table}

{\scriptsize

\begin{table}[h]
\caption{The poset $\big(\mathcal T(I,2), \leqslant^I \big)$ for $G = \mathrm{SO}(9)$, $I = \{\alpha_1, \alpha_2, \alpha_3 \}$.}
\begin{center}
\vspace{-0.4cm}
\[
\xymatrix@R=3pt@C=1pt{
        (0,0,1,1) \ar@{->}[rrr] \ar@{->}[dd] & & & (0,1,1,1) \ar@{->}[rrr] \ar@{->}[dd] & & & (0,2,1,1) \ar@{->}[rrr] \ar@{->}[dd] & & & (0,3,1,1) \ar@{->}[rrr] \ar@{->}[dd] & & & (0,4,1,1) \ar@{->}[rr] \ar@{->}[dd] & & \ldots \\
        & \\
        (1,0,1,1) \ar@{->}[rrr] \ar@{->}[dd] & & & (1,1,1,1) \ar@{->}[rrr] \ar@{->}[dd] & & & (1,2,1,1) \ar@{->}[rrr] \ar@{->}[dd] & & & (1,3,1,1) \ar@{->}[rrr] \ar@{->}[dd] & & & (1,4,1,1) \ar@{->}[rr] \ar@{->}[dd] & & \ldots \\
        & \\
        (2,0,1,1) \ar@{->}[rrr] \ar@{->}[dd] & & & (2,1,1,1) \ar@{->}[rrr] \ar@{->}[dd] & & & (2,2,1,1) \ar@{->}[rrr] \ar@{->}[dd] & & & (2,3,1,1) \ar@{->}[rrr] \ar@{->}[dd] & & & (2,4,1,1) \ar@{->}[rr] \ar@{->}[dd] & & \ldots \\
        & \\
\vdots & & & \vdots & & & \vdots & & & \vdots & & & \vdots
    }
\]
\end{center}
\end{table}

\begin{table}[h]
\caption{The poset $\big(\mathcal T(I,2), \leqslant^I \big)$ for $G = \mathrm{SO}(9)$, $I = \{\alpha_1, \alpha_3 \}$.}
\begin{center}
\vspace{-0.6cm}
\[
\xymatrix@R=3pt@C=1pt{
        (0,0,1,1) \ar@{->}[rrr] \ar@{->}[rrrdd] & & & (1,0,1,1) \ar@{->}[rrr] \ar@{->}[rrrdd] & & & (2,0,1,1) \ar@{->}[rrr] \ar@{->}[rrrdd] & & & (3,0,1,1) \ar@{->}[rrr] \ar@{->}[rrrdd] & & & (4,0,1,1) \ar@{->}[rrr] \ar@{->}[rrrdd] & & & (5,0,1,1) \ar@{->}[rrr] \ar@{->}[rrrdd] & & & \ldots \\
        & \\
        & & & (1,1,1,1) \ar@{->}[rrr] & & & (2,1,1,1) \ar@{->}[rrr] \ar@{->}[rrrdd] & & & (3,1,1,1) \ar@{->}[rrr] \ar@{->}[rrrdd] & & & (4,1,1,1) \ar@{->}[rrr] \ar@{->}[rrrdd] & & & (2,1,1,1) \ar@{->}[rrr] \ar@{->}[rrrdd] & & & \ldots \\
        & \\
        & & & & & & & & & (3,2,1,1) \ar@{->}[rrr] & & & (4,2,1,1) \ar@{->}[rrr] \ar@{->}[rrrdd] & & & (5,2,1,1) \ar@{->}[rrr] \ar@{->}[rrrdd] \ar@{->}[rrrdd] & & & \ldots \\
        & \\
        & & & & & & & & & & & & & & & \ddots & & & \ddots
      }
\]
\end{center}
\vspace{-0.1cm}
\end{table}

\begin{table}[h]
\caption{The poset $\big(\mathcal T(I,2), \leqslant^I \big)$ for $G = \mathrm{SO}(9)$, $I = \{\alpha_2, \alpha_3 \}$.}
\begin{center}
\vspace{-0.6cm}
\[
\xymatrix@R=3pt@C=1pt{
        (0,0,1,1) \ar@{->}[rrr] & & & (0,1,1,1) \ar@{->}[rrr] \ar@{->}[rrrdd] & & & (0,2,1,1) \ar@{->}[rrr] \ar@{->}[rrrdd] & & & (0,3,1,1) \ar@{->}[rrr] \ar@{->}[rrrdd] & & & (0,4,1,1) \ar@{->}[rrr] \ar@{->}[rrrdd] & & & (0,5,1,1) \ar@{->}[rrr] \ar@{->}[rrrdd] & & & \ldots \\
        & \\
        & & & & & & (1,2,1,1) \ar@{->}[rrr] & & & (1,3,1,1) \ar@{->}[rrr] \ar@{->}[rrrdd] & & & (1,4,1,1) \ar@{->}[rrr] \ar@{->}[rrrdd] & & & (1,2,1,1) \ar@{->}[rrr] \ar@{->}[rrrdd] & & & \ldots \\
        & \\
        & & & & & & & & & & & & (2,4,1,1) \ar@{->}[rrr] & & & (2,5,1,1) \ar@{->}[rrr] \ar@{->}[rrrdd] \ar@{->}[rrrdd] & & & \ldots \\
        & \\
        & & & & & & & & & & & & & & & & & & \ddots
      }
\]
\end{center}
\vspace{-0.8cm}
\end{table}

\begin{table}[h]
\caption{The poset $\big(\mathcal T(I,2), \leqslant^I \big)$ for $G = \mathrm{SO}(9)$, $I = \{\alpha_1, \alpha_2 \}$.}
\begin{center}
\vspace{-0.4cm}
\[
\xymatrix@R=3pt@C=1pt{
		& & & & & (0,3,3,3) \ar@{->}[dd] & & \\
        (0,1,1,1) \ar@{->}[rrr] \ar@{->}[dd] & & & (0,2,1,1) \ar@{-}[rr]\ar@{->}[dd] \ar@{->}[ur] & & \ar@{->}[rr] & & (0,3,1,1) \ar@{->}[rrr] \ar@{->}[dd] & & & (0,4,1,1) \ar@{->}[dd] \ar@{->}[rr] & & \ldots \\
        & & & & & (1,3,3,3) \ar@{->}[dd] \\
        (1,1,1,1) \ar@{->}[rrr] \ar@{->}[dd] & & & (1,2,1,1) \ar@{-}[rr]\ar@{->}[dd] \ar@{->}[ur] & & \ar@{->}[rr] & & (1,3,1,1) \ar@{->}[rrr] \ar@{->}[dd] & & & (1,4,1,1) \ar@{->}[dd] \ar@{->}[rr] & & \ldots \\
        & & & & & (2,3,3,3) \ar@{->}[dd] \\
        (2,1,1,1) \ar@{->}[rrr] \ar@{->}[dd] & & & (2,2,1,1) \ar@{-}[rr]\ar@{->}[dd] \ar@{->}[ur] & & \ar@{->}[rr] & & (2,3,1,1) \ar@{->}[rrr] \ar@{->}[dd] & & & (2,4,1,1) \ar@{->}[dd] \ar@{->}[rr] & & \ldots \\
        & & & & & \vdots \\
\vdots & & & \vdots & & & & \vdots & & & \vdots & & &
    }
\]
\end{center}
\vspace{-0.4cm}
\end{table}

}

{\scriptsize

\begin{table}[h]
\caption{The poset $\big(\mathcal T(I,2), \leqslant^I \big)$ for $G = \mathrm{SO}(9)$, $I = \{\alpha_2 \}$.}
\begin{center}      
\vspace{-0.6cm}
\[
\xymatrix@R=3pt@C=1pt{
        & (0,1,1,1) \ar@{->}[rrr] \ar@{->}[rrrdd] & & & (0,2,1,1) \ar@{->}[ddllll] \ar@{->}[rrr] \ar@{->}[rrrdd] & & & (0,3,1,1) \ar@{->}[rrr] \ar@{->}[rrrdd] & & & (0,4,1,1) \ar@{->}[rrr] \ar@{->}[rrrdd] & & & (0,5,1,1) \ar@{->}[rrr] \ar@{->}[rrrdd] & & & \ldots \\
        & \\
       (0,3,3,3) & & & & (1,2,1,1) \ar@{->}[ddllll] \ar@{->}[rrr] & & & (1,3,1,1) \ar@{->}[rrr] \ar@{->}[rrrdd] & & & (1,4,1,1) \ar@{->}[rrr] \ar@{->}[rrrdd] & & & (1,2,1,1) \ar@{->}[rrr] \ar@{->}[rrrdd] & & & \ldots \\
        & \\
        (1,3,3,3) & & & & & & & & & & (2,4,1,1) \ar@{->}[rrr] & & & (2,5,1,1) \ar@{->}[rrr] \ar@{->}[rrrdd] & & & \ldots \\
        & \\
        & & & & & & & & & & & & & & & & \ddots 
      } 
\]
\end{center}
\vspace{-0.4cm}
\end{table}

\begin{table}[h]
\caption{The poset $\big(\mathcal T(I,2), \leqslant^I \big)$ for $G = \mathrm{SO}(9)$, $I = \{\alpha_1\}$.}
\begin{center}
\vspace{-0.4cm}
\[
\xymatrix@R=3pt@C=1pt{
        (1,1,1,1) \ar@{->}[dd] & & & & & & & & (3,3,3,3) \ar@{->}[dd] & & & & & & & & (5,5,5,5) \\
        & & & & & & & & & & & & \\
        (2,1,1,1) \ar@{->}[rrrrrdd] \ar@{->}[dd] \ar@{->}[uurrrrrrr] & & & & & & & & (4,3,3,3) \ar@{->}[dd] \ar@{->}[uurrrrrrr] \\
        & & & & & & & & & & & & \\
        (3,1,1,1) \ar@{->}[rrrrrdd] \ar@{->}[dd] & & & & & (3,2,1,1) \ar@{->}[dd] \ar@{->}[uurrr] & & & (5,3,3,3) \ar@{->}[dd] & & & \\
        & & & & & & & & & & & & \\
        (4,1,1,1) \ar@{->}[rrrrrdd] \ar@{->}[dd] & & & & & (4,2,1,1) \ar@{->}[dd] \ar@{-}[rrrd] \ar@{->}[uurrr] & & & (6,3,3,3) \ar@{->}[dd] & & & \\
        & & & & & & & & \ar@{->}[rrrd] \\
        (5,1,1,1) \ar@{->}[rrrrrdd] \ar@{->}[dd] & & & & & (5,2,1,1) \ar@{->}[dd] \ar@{-}[rrrd] \ar@{->}[uurrr] & & & (7,3,3,3) \ar@{->}[dd] & & & (5,3,1,1) \ar@{->}[dd] \\
        & & & & & & & & \ar@{->}[rrrd] \\
        (6,1,1,1) \ar@{->}[rrrrrdd] \ar@{->}[dd] & & & & & (6,2,1,1) \ar@{->}[dd] \ar@{-}[rrrd] \ar@{->}[uurrr] & & & (8,3,3,3) \ar@{->}[dd] & & & (6,3,1,1) \ar@{->}[dd] \ar@{->}[rrrrrdd] \\
        & & & & & & & & \ar@{->}[rrrd] \\
		\vdots & & & & & \vdots \ar@{->}[uurrr] & & & \vdots & & & \vdots & & & & & \ddots
				}
\]
\end{center}
\end{table}
}

\normalsize

\end{document}